\newtheorem{theorem}{Theorem}
\newtheorem{lemma}{Lemma}
\newcommand{\e}{\varepsilon}
\newcommand{\EE}{\mathsf{E}}
\newcommand{\PP}{\mathsf{P}}
\newcommand{\VV}{\mathsf{V}}
\newcommand{\XX}{\mathbb{X}}
\newcommand{\beq}{\begin{equation}}
\newcommand{\eeq}{\end{equation}}
\newcommand{\lb}{\label}
\begin{document}

\begin{center}
{\large \bf Nonlinearly Perturbed \vspace{1mm} \\ Birth-Death-Type Models} \\
\end{center}
\vspace{2mm}

\begin{center}
Dmitrii Silvestrov,\footnote{Department of Mathematics, Stockholm University, 106 91, Stockholm, Sweden, \\ 
E-mail: silvestrov@math.su.se}  Mikael Petersson,\footnote{Department of Mathematics, Stockholm University, 106 91, Stockholm, Sweden, \\ 
E-mail: mikpe@math.su.se}    Ola H\"{o}ssjer\footnote{Department of Mathematics, Stockholm University, 106 91, Stockholm, Sweden, \\ 
E-mail: ola@math.su.se} 
\end{center}
\vspace{2mm}

{\bf Abstract}:
Asymptotic expansions for stationary and conditional quasi-stationary distributions of 
nonlinearly perturbed birth-death-type semi-Mar\-kov models are presented. Applications to models of population growth, epidemic spread and population genetics are discussed. \\

{\bf Key words}: Markov chain, Semi-Markov process, Nonlinear perturbation, Stationary distribution, Expected hitting time,  Laurent  asymptotic expansion, 
Population dynamics model,  Epidemic model, Population genetics model.   \\

{\bf AMS Mathematical Subject Classification 2010}. Primary: 60J10, 60J27, 60K15; Secondary: 60J28, 65C40, 92D25. \\

{\bf 1.  Introduction} \\

In this paper, we present new algorithms  for  construction of asymptotic expansions for stationary and conditional quasi-stationary distributions of nonlinearly perturbed birth-death-type  semi-Markov processes with a finite phase space.  

We consider models, where  the phase space is one class of communicative states, for embedded Markov chains of pre-limiting perturbed birth-death-type  semi-Markov processes, while it can consist of one  closed class of communicative states or  consist of one class of communicative transient internal states and one or both absorbing end states,  for the limiting embedded Markov chain. 

The initial perturbation conditions are formulated  in the forms of Taylor asymptotic expansions for transition probabilities (of  embedded Markov chains)  
and  expectations of transition times, for perturbed semi-Markov processes.    

The algorithms are based  on special time-space screening procedures for sequential phase space reduction and algorithms for  re-calculation of asymptotic expansions, which constitute perturbation conditions for the  semi-Mar\-kov processes with reduced phase spaces. 

The final asymptotic expansions for stationary distributions of nonlinearly perturbed semi-Markov processes are given in the form of Taylor asymptotic expansions.

Models of perturbed Markov chains  and semi-Markov processes, in particular, for the most difficult cases of perturbed processes with absorption  and so-called singularly perturbed  processes, attracted attention of researchers in the mid of the 20th century.  

An interest in these models has been stimulated by  applications to control and queuing systems, information networks, epidemic models and models of mathematical genetics and population dynamics. As a rule, Markov-type processes with singular perturbations appear as  natural tools for mathematical analysis of multi-component systems with weakly interacting components.

We refer here to  the latest books containing results on  asymptotic expansions for perturbed Markov chains and semi-Markov processes,  
Stewart  (1998, 2001), Korolyuk, V.~S. and Korolyuk, V.~V. (1999),  Konstantinov,   Gu,  Mehrmann and Petkov (2003), 
Bini,  Latouche and Meini (2005), Koroliuk  and Limnios (2005), Yin and Zhang   (2005, 2013),  Gyllenberg  and Silvestrov (2008) and   Avrachenkov, Filar   and   Howlett (2013) and the research report by Silvestrov,  D.  and  Silvestrov, S. (2015), where readers can find comprehensive bibliographies of works in the area. 

In this paper, we concentrate our attention on several perturbed birth-death models of biological nature. 
The first application is population dynamics in a constant environment, where one individual at a time is born or dies, see for instance Lande et al., (2003). The second application is epidemic spread of a disease, reviewed in Hethcote (2000) and N{\aa}sell (2011). Here one individual at a time gets infected or recovers, and recovered individuals become susceptible for new infections. The third application is population genetic models, treated extensively in Crow and Kimura (1970) and Ewens (2004). We focus in particular on models with overlapping generations, introduced by Moran (1958a). These Moran type models focus on the the dynamics of the variants of a certain gene for a one-sex population, with an assumption that a copy of the gene is replaced for one individual at a time.  

The paper includes 8 sections. In Section 2, we introduce a model of perturbed semi-Markov processes, including processes of birth-death type, define conditional quasi-stationary distributions for such processes and formulate basic perturbation conditions. In Section 3, we describe examples of perturbed population dynamics, epidemic and population genetic models, which can be described in the framework of birth-death-type Markov chains and semi-Markov processes. In Section 4, we present time-space screening procedures of phase space reduction for perturbed semi-Markov processes and recurrent algorithms for computing expectations of hitting times and stationary and conditional quasi-stationary distributions for birth-death semi-Markov processes. In Section 5, we get the first and the second order asymptotic expansions  for  stationary and conditional quasi-stationary distributions of perturbed semi-Markov processes and give explicit formulas for  coefficients in these expansions. In Section 6, we describe general recurrent algorithms for construction of high order asymptotic expansions  for  stationary and conditional quasi-stationary distributions of perturbed birth-death-type semi-Markov processes. In Section 7, we apply the above asymptotic results  to the perturbed birth-death models of biological nature presented in Section 3 and present results of related numerical studies. In Section 8,  we give concluding remarks and comments. \\

{\bf 2. Nonlinearly perturbed  semi-Markov processes} \\

In this section, we introduce a model of perturbed semi-Markov processes, including processes of birth-death type, define conditional quasi-stationary distributions for such processes and formulate basic perturbation conditions. \\

{\bf 2.1. Perturbed semi-Markov processes}.\\

Let  ${\mathbb X} = \{0, \ldots, N \}$  and $(\eta^{(\e)}_n, \kappa^{(\e)}_n), n = 0, 1, \ldots$ be, for every value of a perturbation parameter $\e \in (0, \e_0]$, where $0  < \e_0  \leq 1$, a Markov renewal process, i.e., a homogeneous Markov chain with the phase space ${\mathbb X} \times [0, \infty)$, an initial distribution $\bar{p}^{(\e)} = \langle p^{(\e)}_i = \PP \{\eta^{(\e)}_0 = i, \kappa^{(\e)}_0 = 0 \} = \PP \{\eta^{(\e)}_0 = i \}, i \in {\mathbb X} \rangle$ and transition probabilities, defined for  $(i, s), (j, t) \in  {\mathbb X} \times [0, \infty)$,
\begin{equation}\label{semika}
Q^{(\e)}_{ij}(t) = \PP \{ \eta^{(\e)}_{1} = j, \kappa^{(\e)}_{1} \leq t / \eta^{(\e)}_{0} = i, \kappa^{(\e)}_{0} = s \}. 
\end{equation}

In this case,  the random sequence $\eta^{(\e)}_n$ is also a homogeneous (embedded) Markov chain with 
the  phase space $\XX$ and the transition probabilities, defined for  $i, j \in \XX$,
\begin{equation}\label{embed}
p_{ij}(\e) = \PP \{ \eta^{(\e)}_{1} = j / \eta^{(\e)}_{0} = i \} = Q^{(\e)}_{ij}(\infty). 
\end{equation}

We assume that the following condition holds:
\begin{itemize}
\item [${\bf A}$:] $\XX$ is a communicative class of states for the embedded Markov chain $\eta^{(\e)}_n$, for every $\e \in (0, \e_0]$.
\end{itemize}

We exclude instant transitions and assume that the following condition holds:
\begin{itemize}
\item [${\bf B}$:]  $Q_{ij}^{(\e)}(0) = 0, \ i, j \in \XX$, for every $\e \in (0, \e_0]$.
\end{itemize}

Let us now introduce  a semi-Markov process,
\begin{equation}\label{sepr}
\eta^{(\e)}(t) = \eta^{(\e)}_{\nu^{(\e)}(t)},  \ t \geq 0,
\end{equation}
where $\nu^{(\e)}(t) = \max(n \geq 0: \zeta^{(\e)}_n \leq t)$
is a number of jumps in the  time interval  $[0, t]$, for $t \geq 0$, and 
\begin{equation}\label{zetan}
\zeta^{(\e)}_n = \kappa^{(\e)}_1 + \cdots + \kappa^{(\e)}_n,
\end{equation}
$n = 0, 1, \ldots$, are sequential moments of jumps, for the semi-Markov process $\eta^{(\e)}(t)$.

This process has the phase space $\XX$, the initial distribution $\bar{p} = \langle p_i = \PP \{\eta^{(\e)}(0) = i \}, 
i \in {\mathbb X} \rangle$ and transition probabilities $Q^{(\e)}_{ij}(t), t \geq 0,  i, j \in \XX$. 

If  $Q^{(\e)}_{ij}(t) = {\rm I}(t \geq 1)p_{ij}(\e), t \geq 0, i, j \in \XX$,  then $\eta^{(\e)}(t) = \eta^{(\e)}_{[t]}, t \geq 0$ is a discrete time homogeneous Markov chain embedded in continuous time.

If $Q^{(\e)}_{ij}(t) = (1 - e^{- \lambda_i(\e) t})p_{ij}(\e), t \geq 0, i, j \in \XX$ (here,   $0 <  \lambda_i(\e) < \infty, i \in \XX$), then $\eta^{(\e)}(t), t \geq 0$ is a continuous time homogeneous Markov chain. 

Let us denote,  for  $t \geq 0, i \in \XX$,
\begin{equation}\label{embedada}
F^{(\e)}_{i}(t)  = \PP_i \{ \kappa^{(\e)}_{1} \leq t \} = \sum_{j \in \XX} Q^{(\e)}_{ij}(t). 
\end{equation}
and 
\begin{equation}\label{embedadam}
e_i(\e)  = \EE_i \kappa^{(\e)}_{1} = \int_0^\infty t  F^{(\e)}_{i}(dt), 
\end{equation}

Here and henceforth,  notations $\PP_i $ and $ \EE_i $ are used for conditional probabilities and expectations under 
condition $\eta^{(\e)}(0) = i$.

Let us also introduce the conditional distributions of transition times  $\kappa^{(\e)}_n$, defined for  $t \geq 0, i, j \in \XX$, 
\begin{equation*}
F^{(\e)}_{ij}(t) = \PP \{ \kappa^{(\e)}_{1} \leq t / \eta^{(\e)}_{0} = i, \eta^{(\e)}_{1} = j  \} \makebox[20mm]{}
\end{equation*}
\begin{equation}\label{embedas}
= \left\{
\begin{array}{lll}
Q^{(\e)}_{ij}(t)/p_{ij}(\e) & \text{if} \ \ p_{ij}(\e)  > 0,  \\
F^{(\e)}_{i}(t)  &  \text{if} \ \  p_{ij}(\e)  = 0. 
\end{array}
\right.
\end{equation}
and also denote, for $i, j \in \XX$,
\begin{equation}\label{embedadam}
f_{ij}(\e)  = \EE_i \{ \kappa^{(\e)}_{1} / \eta^{(\e)}_{1} = j \} = \int_0^\infty t  F^{(\e)}_{ij}(dt), 
\end{equation}
and
\begin{equation}\label{embedadam}
e_{ij}(\e)  = \EE_i \{\kappa^{(\e)}_{1} I( \eta^{(\e)}_{1} = j)\} = \int_0^\infty t  Q^{(\e)}_{ij}(dt),  
\end{equation}

We also assume that the following condition holds: 
\begin{itemize}
\item [${\bf C}$:] $e_{ij}(\e) < \infty, \ i, j \in \XX$, for   $\e \in (0, \e_0]$.
\end{itemize}

Obviously, the above expectations are connected by the following relations, for $i, j \in \XX$, 
\begin{equation}\label{embedadamad}
e_{ij}(\e)  = f_{ij}(\e) p_{ij}(\e),   
\end{equation}
and
\begin{equation}\label{embedadamad}
e_i(\e)  = \sum_{j \in \XX} f_{ij}(\e) p_{ij}(\e)   = \sum_{j \in \XX} e_{ij}(\e).  
\end{equation}

In the case of discrete time Markov chain $f_{ij}(\e)  = 1, i, j \in \XX$. 

In the case of continuous time Markov chain 
$f_{ij}(\e)  = \lambda^{-1}_{i}(\e), i, j \in \XX$.  

Conditions ${\bf A}$ -- ${\bf C}$ imply that the semi-Markov process $\eta^{(\e)}(t)$ 
is, for every $\e \in (0, \e_0]$, ergodic in the sense that the following asymptotic relation holds, 
\begin{equation}\label{statik}
\mu^{(\e)}_i(t) = \frac{1}{t} \int_0^t I(\eta^{(\e)}(s) = i)ds \stackrel{\PP}{\longrightarrow} \pi_i(\e)  \ {\rm as} \ t \to \infty, \ i \in \XX.
 \end{equation}
 
The ergodic relation (\ref{statik}) holds for any initial distribution $\bar{p}^{(\e)}$,  and the stationary probabilities  
$\pi_i(\e), i \in \XX$  do not depend on the initial distribution. Moreover, $\pi_i(\e) > 0, i \in \XX$ and $\sum_{i \in \XX} \pi_i(\e) = 1$. \\

{\bf 2.2. Perturbed semi-Markov processes of birth-death type} \\

The semi-Markov process $\eta^{(\e)}(t)$ is of birth-death type if the following relation holds, for $t \geq 0$, 
\begin{equation}\label{emsemi}
Q^{(\e)}_{ij}(t) = \left\{
\begin{array}{lll}
F^{(\e)}_{0, \pm}(t) p_{0, \pm}(\e)  &  \text{if} \ j =  0 + \frac{1 \pm  1}{2},  \ \text{for} \ i = 0, \\ 
F^{(\e)}_{i, \pm}(t) p_{i, \pm}(\e)    & \text{if} \ j = i \pm 1, \quad \quad \text{for} \ 0 < i < N,   \\ 
F^{(\e)}_{N, \pm}(t) p_{N, \pm}(\e)     &  \text{if} \ j =  N - \frac{1 \mp  1}{2},  \ \text{for} \ i = N, \\
0  & \text{otherwise}.
\end{array}
\right.
\end{equation}
where: (a)  $F^{(\e)}_{i, \pm}(t), i \in \XX$ are, for every $\e \in (0, \e_0]$,  distribution functions concentrated on $[0, \infty)$ such that 
$F^{(\e)}_{i, \pm}(0) = 0,  i \in \XX$; (b) $p_{i, \pm}(\e)  \geq 0, p_{i, -}(\e)  + p_{i, +}(\e) = 1$, for every   $\e \in (0, \e_0]$. 

Let us  also denote, for $i, j \in \XX$,
\begin{equation}\label{emdam}
f_{i, \pm}(\e) = \int_0^\infty t  F^{(\e)}_{i, \pm}(dt), \ e_{i, \pm}(\e) =  f_{i, \pm}(\e)  p_{i, \pm}(\e).
\end{equation}

The following relations take place;
\begin{equation}\label{emsemi}
p_{ij}(\e) = \left\{
\begin{array}{lll}
p_{0, \pm}(\e)  &  \text{if} \ j =  0 + \frac{1 \pm  1}{2},  \ \text{for} \ i = 0, \\ 
p_{i, \pm}(\e)  & \text{if} \ j = i \pm 1, \quad \quad \text{for} \ 0 < i < N,   \\ 
p_{N, \pm}(\e)   &  \text{if} \ j =  N - \frac{1 \mp  1}{2},  \ \text{for} \ i = N, \\
0  & \text{otherwise},
\end{array}
\right.
\end{equation}
and
\begin{equation}\label{emsemi}
e_{ij}(\e) = \left\{
\begin{array}{lll}
e_{0, \pm}(\e)  &  \text{if} \ j =  0 + \frac{1 \pm  1}{2},  \ \text{for} \ i = 0, \\ 
e_{i, \pm}(\e)  & \text{if} \ j = i \pm 1, \quad \quad \text{for} \ 0 < i < N,   \\ 
e_{N, \pm}(\e)   &  \text{if} \ j =  N - \frac{1 \mp  1}{2},  \ \text{for} \ i = N, \\
0  & \text{otherwise}.
\end{array}
\right.
\end{equation}

Let us assume that there exist some integer $0 \leq L < \infty$ such that the  following perturbation conditions hold:
\begin{itemize}
\item [${\bf D}_L$:] $p_{i, \pm}(\e) =  \sum_{l = 0}^{L  + l_{i, \pm}} a_{i, \pm}[l] \e^l + o_{i, \pm}(\e^{L + l_{i, \pm}}), \, \e \in (0, \e_0]$, for $i \in \XX$, where: {\bf (a)}  
$|a_{i, \pm}[l]| < \infty$, for $0 \leq l \leq  L + l_{i, \pm}, i \in \XX$;  {\bf (b)} $l_{i, \pm} = 0$ and
$a_{i, \pm }[0] > 0$, for $0 <  i < N$;  {\bf (c)}  $l_{i, \pm} = 0$  and   $a_{i, \pm }[0] > 0$ or  $l_{i, \pm} = 1$  and $a_{i, \pm }[0] = 0, a_{i, \pm }[1] > 0 $, for $i = 0, N$;      {\bf (d)} 
$o_{i, \pm}(\e^{L + l_{i, \pm}})/\e^{L+ l_{i, \pm}} \to 0$ as $\e \to 0$, for $i \in \XX$. 
\end{itemize}
and
\begin{itemize}
\item [${\bf E}_L$:]  $e_{i, \pm}(\e) =  \sum_{l = 0}^{L + l_{i, \pm}} b_{i, \pm}[l]\e^l + \dot{o}_{i, \pm}(\e^{L + l_{i, \pm}}), \, \e \in (0, \e_0]$, for $i \in \XX$,  where: {\bf (a)} 
$|b_{i, \pm}[l]| < \infty$, for $0 \leq l \leq L + l_{i, \pm}, i \in \XX$;  {\bf (b)} $l_{i, \pm} = 0$ and $b_{i, \pm}[0] >0$, for $0 < i < N$;  {\bf (c)}  $l_{i, \pm} = 0$  and   $b_{i, \pm }[0] > 0$ or 
$l_{i, \pm} = 1$  and   $b_{i, \pm }[0] = 0, b_{i, \pm }[1] > 0 $, for $i = 0, N$;  {\bf (d)} $ \dot{o}_{i}(\e^{L + l_{i, \pm}})/\e^{L + l_{i, \pm}} \to 0$ as $\e \to 0$, for $i \in \XX$.
\end{itemize}

It is useful to explain what role is played by parameter $l_i$ in conditions  ${\bf D}_L$ and ${\bf E}_L$. This parameter equalizes the so-called length of asymptotic expansions penetrating 
these conditions, which is defined as number of coefficients for powers of $\e$ in the corresponding expansions, beginning from the first non-zero coefficients and up to the  
coefficients for the largest powers of $\e$ in the corresponding asymptotic expansions. All expansions penetrating conditions  ${\bf D}_L$ and ${\bf E}_L$ have the length $L$.

Note that conditions ${\bf D}_L$ and ${\bf E}_L$ imply that there exist $\e'_0 \in (0, \e_0]$ such that  probabilities $p_{i, \pm}(\e) > 0, i \in \XX$ and expectations  $e_{i, \pm}(\e)  > 0, i \in \XX$ for  $\e \in (0, \e'_0]$. This let us just assume that $\e'_0 = \e_0$.  

The model assumption, $p_{i, -}(\e)  + p_{i, +}(\e) = 1, \e \in (0, \e_0]$,  also implies that the following condition should hold:
\begin{itemize}
\item [${\bf F}_L$:] $a_{i, -}[0] + a_{i, +}[0] = 1, a_{i, -}[l] + a_{i, +}[l] =  0, 1 \leq l \leq  L + l_{i, +} \wedge l_{i, -}$, for $i \in \XX$.
\end{itemize}

We also assume that the following natural consistency condition for asymptotic expansions penetrating perturbation conditions  ${\bf D}$ and ${\bf E}$  hold:
\begin{itemize}
\item [${\bf G}$:] $b_{i, \pm }[0] > 0$ if and only if $a_{i, \pm }[0] > 0$, for $i = 0, N$.  
\end{itemize}

Condition ${\bf D}_L$ implies that there exist $ \lim_{\e \to 0} p_{i, \pm}(\e) = p_{i, \pm}(0), i \in \XX$ and, thus, there also exist $\lim_{\e \to 0} p_{ij}(\e) = p_{ij}(0), i, j \in \XX$. 

Condition  ${\bf E}_L$ implies that there exist $ \lim_{\e \to 0} e_{i, \pm}(\e) = e_{i, \pm}(0), i \in \XX$ and, thus, there also exist $\lim_{\e \to 0} e_{ij}(\e) = e_{ij}(0), i, j \in \XX$. 

There are tree basic variants of the model, where one of the following conditions hold:
\begin{itemize}
\item [${\bf H_1}$:]  $a_{0, + }[0] > 0,  a_{N, - }[0] > 0$.
\end{itemize}
\begin{itemize}
\item [${\bf H_2}$:]  $a_{0, + }[0] = 0,  a_{N, - }[0] > 0$.
\end{itemize}
\begin{itemize}
\item [${\bf H_3}$:]  $a_{0, + }[0] = 0,  a_{N, - }[0] = 0$. 
\end{itemize}

The limiting birth-death type Markov chain $\eta^{(0)}_n$ with the matrix of transition probabilities $\| p_{ij}(0) \|$ has: {\bf (a)} one class of communicative states $\XX$, if condition ${\bf H_1}$ holds, {\bf (b)} one communicative class of transient states $_0\XX = \XX \setminus \{ 0 \}$ and an absorbing state $0$, if condition ${\bf H_2}$ holds, and {\bf (c)} one communicative class of transient states $_{0, N}\XX =  \XX \setminus \{ 0, N \}$ and two absorbing states $0$ and $N$, if condition ${\bf H_3}$ holds.

The case  $a_{0, + }[0] > 0,  a_{N, - }[0] = 0$ is analogous to the case where condition ${\bf H_2}$ holds, and we omit its consideration.

In this paper, we get, under conditions  ${\bf A}$ --  ${\bf G}$ and ${\bf H}_i$ (for $i = 1, 2, 3$), asymptotic expansions for stationary 
probabilities,  as $\e \to 0$, 
\begin{equation} \label{sta1} 
\pi_i(\e) =   \sum_{l = 0}^{L} c_{i}[l]\e^l + o_{i}(\e^{L}), \ i \in \XX.
\end{equation}

Moreover, we shall show that the limiting stationary probabilities $\pi_i(0)  > 0, i \in \XX$, if condition ${\bf H_1}$ holds,  $\pi_0(0) =  1,  \pi_i(0) =  0, i \in \, _0\XX$,  if condition ${\bf H_2}$ holds,  and $\pi_0(0), \pi_N(0) > 0, \pi_0(0) + \pi_N(0) = 1,   \pi_i(0) =  0, i \in  \, _{0, N}\XX$,  if condition ${\bf H_3}$ holds.

This implies that there is sense to consider so-called conditional quasi-stationary stationary probabilities, which are defined as, 
\begin{equation}\label{quasiF2}
\tilde{\pi}_i(\e) = \frac{\pi_i(\e)}{1 - \pi_0(\e)} = \frac{\pi_i(\e)}{\sum_{j \in \, _0\XX} \pi_j(\e)}, \ i \in \, _0\XX,
\end{equation} 
in the case where condition ${\bf H_2}$ holds, or as, 
\begin{equation}\label{quasiF3}
\hat{\pi}_i(\e) = \frac{\pi_i(\e)}{1 - \pi_0(\e) - \pi_N(\e)} = \frac{\pi_i(\e)}{\sum_{j \in  \, _{0, N}\XX} \pi_j(\e)}, \  \ i \in \ _{0, N}\XX, 
\end{equation}
in the case where condition ${\bf H_3}$ holds.

We also get, under conditions  ${\bf A}$ --  ${\bf G}$ and ${\bf H_2}$, asymptotic expansions for conditional quasi-stationary probabilities, 
\begin{equation} \label{sta2} 
\tilde{\pi}_i(\e) =   \sum_{l = 0}^{L} \tilde{c}_{i}[l]\e^l + \tilde{o}_{i}(\e^{L}), \ i \in \, _o\XX, 
\end{equation} 
and, under conditions  ${\bf A}$ --  ${\bf G}$ and ${\bf H_3}$, asymptotic expansions for conditional quasi-stationary probabilities,  
\begin{equation} \label{sta3} 
\hat{\pi}_i(\e) =   \sum_{l = 0}^{L} \hat{c}_{i}[l]\e^l + \hat{o}_{i}(\e^{L}), \ i \in \XX.  
\end{equation} 

The coefficients in the above asymptotic expansions are given by explicit recurrent formulas via coefficients in asymptotic expansions given in initial perturbation 
conditions ${\bf D}_L$  and ${\bf E}_L$. 

The first coefficients  $\pi_i(\e)  = c_{i}[0], \tilde{\pi}_i(0) = \tilde{c}_{i}[0]$ and $\hat{\pi}_i(0) = \hat{c}_{i}[0]$ describe the asymptotic behavior of stationary and quasi-stationary probabilities and their continuity properties with respect to small perturbations of  transition characteristics of the corresponding semi-Markov processes.

The second coefficients $c_{i}[1], \tilde{c}_{i}[1]$ and $\hat{c}_{i}[1]$ determine sensitivity of stationary and quasi-stationary probabilities with respect to small perturbations  of transition characteristics.

The high order coefficients can be useful and improve accuracy of the corresponding numerical computations based on the corresponding asymptotic expansions,  especially, for the models, where actual values of perturbation parameter 
$\e$ are not small enough to neglect the high order terms in the corresponding asymptotic expansions. 

We also would like to comment the use of the term ``conditional quasi-stationary probability''  for quantities defined in relations (\ref{quasiF2}) and (\ref{quasiF3}). As a matter of fact, the term ``quasi-stationary probability (distribution)'' is traditionally used  for limits of probabilities,  
\beq
q_j(\e) = \PP_i\{ \eta^{(\e)}(t) = j / \eta^{(\e)}(s) \notin A, 0 \leq s \leq t \}
\lb{quasi}
\eeq
as  $t \to \infty$, where  $A$ is some special subsets of $\XX$. We refer to the book by Gyllenberg and Silvestrov (2008), where one can find results concerned asymptotic expansions 
for such quasi-stationary distributions for perturbed semi-Markov process. A detailed presentation of results concerned  quasi-stationary distributions and comprehensive bibliographies of works in this are can be found in the above book as well as in the recent books by 
N{\aa}sell (2011) and Collet, Mart\'{\i}nez and San Mart\'{\i}n (2013).  \\

{\bf 3. Examples of perturbed birth-death type models} \\

In this section we consider a number of applications of perturbed semi-Markov processes of birth-death type.
We will assume that the conditional distribution of the transition time $\kappa_n^{(\e)}$ between two jumps only depends on the state $i$ before the jump, not the state $j$ to which a jump occurs, i.e.\ $F^{(\e)}_{ij}(t)=F^{(\e)}_{i}(t)$. Formula (\ref{embedas}) then simplifies to 
\begin{equation}\label{Qij}
Q^{(\e)}_{ij}(t) = F^{(\e)}_{i}(t)p_{ij}(\e),
\end{equation}
for $t \geq 0$ and $i, j \in \XX$.
 
An important special case of (\ref{Qij}) is a geometrically distributed transition time
\begin{equation}\label{FGeo}
F_i^{(\e)} \sim \mbox{Ge}\left[\lambda_i(\e)\right] \Longrightarrow 
Q^{(\e)}_{ij}(t) = \left\{1-\left[1-\lambda_i(\e)\right]^{[t]}\right\}p_{ij}(\e)
\end{equation}
for $0<\lambda_i(\e)\le 1$, and with $[t]$ the integer part of $t$. In particular, $\lambda_i(\e)=1$ corresponds to $\eta^{(\e)}(t) = \eta^{(\e)}_{[t]}, t \geq 0$; a discrete time homogeneous Markov chain embedded in continuous time.

A second special case of (\ref{Qij}) is exponentially distributed transition times
\begin{equation}\label{FExp}
F_i^{(\e)} \sim \mbox{Exp}\left[\lambda_i(\e)\right] \Longrightarrow
Q^{(\e)}_{ij}(t) = \left[1 - e^{- \lambda_i(\e) t}\right] p_{ij}(\e)
\end{equation}
for $0<\lambda_i(\e)<\infty$. Then $\eta^{(\e)}(t), t \geq 0$ is a continuous time homogeneous Markov chain. 

For a semi-Markov process of birth-death type, we refer to $l(i)=i-1+I(i=0)$ and $u(i)=i+1-I(i=N)$ as the lower and upper state to which a transition from $i$ is possible. In all the examples below the transition time distribution is given by (\ref{FGeo}) or (\ref{FExp}). For both of these models, it is convenient to introduce 
\begin{equation}\label{laDef}
\begin{array}{rcl}
\lambda_{i,+}(\e) &=& \lambda_{i,u(i)}(\e),\\
\lambda_{i,-}(\e) &=& \lambda_{i,l(i)}(\e),\\
\lambda_i{(\e)} &=& \lambda_{i,-}{(\e)} + \lambda_{i,+}{(\e)},
\end{array}
\end{equation}
so that the transition probabilities of the imbedded  Markov chain satisfy
\begin{equation}\label{pDef}
p_{i,+}(\e) = 1-p_{i,-}(\e) = \frac{\lambda_{i,+}{(\e)}}{\lambda_i{(\e)}},
\end{equation}
in accordance with requirement b) below equation (\ref{emsemi}). 
Here $\lambda_{ij}(\e)$ is either the probability by which a transition from $i$ to $j$ occurs for a discrete transition time (\ref{FGeo}), or the rate of transition from $i$ to $j$, for a continuous transition time (\ref{FExp}). The stationary distribution (\ref{statik}) then has the exact expression 
\begin{equation}\label{pii}
\pi_i(\e) \propto \left\{\begin{array}{ll}
1, & i=0,\\
\frac{\lambda_{0,+}(\e)\cdot \ldots \cdot \lambda_{i-1,+}(\e)}
{\lambda_{1,-}(\e)\cdot \ldots \cdot \lambda_{i,-}(\e)}, & i=1,\ldots,N,
\end{array}\right.
\end{equation}
for $0<\e\le\e_0$, both in discrete and continuous time (\ref{FGeo})-(\ref{FExp}), with a proportionality constant chosen so that $\sum_{i=0}^N \pi_i(\e)=1$. 

Suppose $\lambda_{i,+}(\e)$ and $\lambda_{i,+}(\e)$ admit series 
\beq
\lambda_{i, \pm}(\e) =  \sum_{l = 0}^{L_{i,\pm}} g_{i, \pm}[l] \e^l + o_{i, \pm}(\e^{L + l_{i, \pm}}) 
\lb{lambdaexp}
\eeq
for $\e \in (0, \e_0]$. 
From equations (\ref{embedadam})-(\ref{embedadamad}), (\ref{FGeo})-(\ref{FExp}) and (\ref{pDef}) we deduce that
\beq
e_{i,\pm}(\e) = \frac{1}{\lambda_i(\e)}\cdot \frac{\lambda_{i,\pm}(\e)}{\lambda_i(\e)}.
\lb{eipm}
\eeq
Inserting (\ref{lambdaexp}) into (\ref{eipm}), we find that 
\beq
g_{i,-}[0]+g_{i,+}[0]>0
\lb{gsum}
\eeq 
must hold for all $i\in\XX$ in order for the series expansion of $e_{i,\pm}(\e)$ to satisfy Condition $\mbox{\bf E}_L$. It therefore follows from (\ref{pDef}) that $p_{i,\pm}(\e)$ will satisfy perturbation condition $\mbox{\bf D}_L$, with $L+l_i = \min(L_{i,-},L_{i,+})$, and
\beq
a_{i,\pm}[0] = \frac{g_{i,\pm}[0]}{g_{i,-}[0]+g_{i,+}[0]}.
\lb{apm}
\eeq
Because of (\ref{gsum}) and (\ref{apm}), we can rephrase the three perturbation scenarios $\mathop{\mbox{\bf H}}_1$-$\mathop{\mbox{\bf H}}_3$ of Subsection 2.2 as
\beq
\begin{array}{ll}
\mathop{\mbox{\bf H}}_1: & g_{0,+}[0]>0, g_{N,-}[0]>0,\\  
\mathop{\mbox{\bf H}}_2: & g_{0,+}[0]=0, g_{N,-}[0]>0,\\  
\mathop{\mbox{\bf H}}_3: & g_{0,+}[0]=0, g_{N,-}[0]=0.
\end{array}
\lb{H1H3}
\eeq
This will be utilized in Subsections 3.1-3.3 in order to characterize the various perturbed models that we propose. 
Under $\mathop{\mbox{\bf H}}_2$, the exact expression for the conditional quasi stationary distribution (\ref{quasiF2}) is readily obtained from (\ref{pii}). It equals 
\begin{equation}\label{tpii}
\tilde{\pi}_i(\e) \propto \frac{\lambda_{1,+}(\e)\cdot \ldots \cdot \lambda_{i-1,+}(\e)}
{\lambda_{1,-}(\e)\cdot \ldots \cdot \lambda_{i,-}(\e)}
\end{equation}
for $i=1,\ldots,N$ and $0<\e\le\e_0$, with the numerator equal to 1 when $i=1$, 
and a proportionality constant chosen so that $\sum_{i=1}^N \tilde{\pi}_i(\e)=1$. As $\e\to 0$, this expression converges to
\begin{equation}\label{tpii0}
\tilde{\pi}_i(0) \propto \frac{\lambda_{1,+}(0)\cdot \ldots \cdot \lambda_{i-1,+}(0)}
{\lambda_{1,-}(0)\cdot \ldots \cdot \lambda_{i,-}(0)}.
\end{equation}
If Scenario $\mathop{\mbox{\bf H}}_3$ holds, we find analogously that the conditional quasi stationary distribution (\ref{quasiF3}) is given by   
\beq
\hat{\pi}_i(\e) \propto \frac{\lambda_{0,+}(\e)\cdot \ldots \cdot \lambda_{i-1,+}(\e)}
{\lambda_{1,-}(\e)\cdot \ldots \cdot \lambda_{i,-}(\e)}
\lb{pihat}
\eeq
for $i=1,\ldots,N-1$, with a limit 
\begin{equation}
\hat{\pi}_i(0) \propto \frac{\lambda_{1,+}(0)\cdot \ldots \cdot \lambda_{i-1,+}(0)}
{\lambda_{1,-}(0)\cdot \ldots \cdot \lambda_{i,-}(0)}.
\label{CondQuasiLimit}
\end{equation} \vspace{2mm}

{\bf 3.1.  Perturbed population dynamics models} \\

Let $N$ denote the maximal size of a population, and let $\eta^{(\e)}(t)$ be its size at time $t$.
In order to model the dynamics of the population, we introduce births, deaths and immigration from outside, according to a parametric model with
\begin{equation}\label{la+PopDyn}
\lambda_{i,+}(\e) = \lambda i \left[1-\alpha_1 \left(\frac{i}{N}\right)^{\theta_1}\right] 
+ \nu \left[ 1-\left(\frac{i}{N}\right)^{\theta_2}\right]
\end{equation}
and 
\begin{equation}\label{la-PopDyn}
\lambda_{i,-}(\e) = \mu i \left[1+\alpha_2\left(\frac{i}{N}\right)^{\theta_3}\right].
\end{equation}
For a small population ($i\ll N$), we interpret the three parameters $\lambda>0$, $\mu>0$ and $\nu>0$ as a birth rate per individual, a death rate per individual and an immigration rate, whereas $\alpha_k,\theta_k$ are density regulation parameters that model decreased birth/immigration and increased death for a population close to its maximal size. They satisfy $\theta_k>0$, $\alpha_1\le 1$ and $\alpha_1,\alpha_2\ge 0$, where the last inequality is strict for at least one of $\alpha_1$ and $\alpha_2$. A more general model would allow birth, death and immigration rates to vary non-parametrically with $i$.  

The expected growth rate of the population, when $0<i<N$, is 
$$
\begin{array}{l}
\EE\left[\eta^{(\e)}(t+\Delta t)-\eta^{(\e)}(t)|\eta^{(\e)}(t)=i\right] = \Delta t \left[\lambda_{i,+}(\e)-\lambda_{i,-}(\e)\right]\\
\,\,\,\,\,\,\,\,\,\, = \Delta t \left\{ \lambda i \left[1-\alpha_1 \left(\frac{i}{N}\right)^{\theta_1}\right]
+ \nu \left[ 1-\left(\frac{i}{N}\right)^{\theta_2}\right] - \mu i \left[1+\alpha_2\left(\frac{i}{N}\right)^{\theta_3}\right]\right\},
\end{array}
$$
where $\Delta t=1$ in discrete time (\ref{FGeo}), and $\Delta t >0$ is infinitesimal in continuous time (\ref{FExp}).
When $\theta_1=\theta_2=\theta_3=\theta$, this expression simplifies to 
\begin{equation}\label{EetaPop}
\begin{array}{l}
\EE\left[\eta^{(\e)}(t+\Delta t)-\eta^{(\e)}(t)|\eta^{(\e)}(t)=i\right] \\
\,\,\,\,\,\, = \Delta t \left\{ \lambda i \left[1-\alpha_1 \left(\frac{i}{N}\right)^{\theta}\right] 
- \mu i \left[1+\alpha_2\left(\frac{i}{N}\right)^{\theta}\right]\right\}
+ \nu \left[ 1-\left(\frac{i}{N}\right)^{\theta}\right],\\
\,\,\,\,\,\, = 
\Delta t \cdot \left\{\begin{array}{ll} 
(\lambda-\mu)i\left[1- \frac{\alpha_1\lambda + \alpha_2\mu}{\lambda-\mu}\left(\frac{i}{N}\right)^\theta \right] 
+ \nu \left[ 1-\left(\frac{i}{N}\right)^{\theta}\right], & \lambda\ne\mu\\
-\mu i (\alpha_1+\alpha_2)\left(\frac{i}{N}\right)^\theta 
+ \nu \left[ 1-\left(\frac{i}{N}\right)^{\theta}\right], & \lambda=\mu.
\end{array}\right.
\end{array}
\end{equation}
We will consider two perturbation scenarios. The first one has 
\begin{equation}\label{PopDynPert1}
\nu = \nu(\e) = \e,
\end{equation}
whereas all other parameters are kept fixed, not depending on $\e$. It is also possible to consider more general nonlinear functions $\nu(\e)$, but this will hardly add more insight to how immigration affects population dynamics. The unperturbed $\e=0$ model corresponds to an isolated population that only increases through birth events. Since $\lambda_{0,-}(\e)=0$ and $\lambda_{0,+}(\e)=\e$, it follows that $g_{0,-}[0]=g_{0,+}[0]=0$, and therefore formula (\ref{gsum}) is violated for $i=0$.
But the properties of $\eta^{(\e)}$ remain the same if we put $\lambda_{0,-}(\e)=1$ instead. With this modification,
formula (\ref{H1H3}) implies that Condition $\mathop{\mbox{\bf H}}_2$ of Subsection 2.2 holds, 
and hence the $\e\to 0$ limit of the stationary distribution in (\ref{statik}) and (\ref{pii}) is concentrated at state 0 ($\pi_0(0)=1$). For small $\e$, we can think of a population that resides on an island and faces subsequent extinction and recolonization events. After the population temporarily dies out, the island occasionally receives new immigrants at rate or probability $\e$. 
Let $\tau_0^{(\e)}$ be the time it takes for the population to get temporarily extinct again, after an immigrant has entered and empty island. It then follows from a slight modificaton of equation (\ref{stationary}) in Subsection 4.2 or Theorem \ref{theoremH2} of Subsection 5.2 (which disregards transitions $0\to 0$) and the relation $\lambda_{0,+}(\e)=\e$, that  
a first order expansion of the probability that the island is empty at stationarity, is
\begin{equation}\label{NonExtinctProb}
\pi_0(\e) = \frac{1/\lambda_{0,+}(\e)}{1/\lambda_{0,+}(\e) + \EE_1(\tau_0^{(\e)})} = \frac{1/\e}{1/\e + E_{10}(\e)} 
= 1-E_{10}(\e)\e + o(\e).
\end{equation}
This expansion is accurate when the perturbation parameter is small ($\e\ll 1/E_{10}(\e)$), otherwise higher order terms (\ref{statar}) are needed. The value of $E_{10}(\e)$ will be highly dependent on the value of the basic reproduction number $R_0=\lambda/\mu$. When $R_0>1$, the expected time to extinction will be very large, and $\pi_0(\e)$ will be close to 0 for all but very small $\e$. On the other hand, (\ref{NonExtinctProb}) is accurate for a larger range of $\e$ when $R_0<1$, since $E_{10}(\e)$ is then small.     

In order to find useful approximations of the conditional quasi stationary distribution $\tilde{\pi}_i(\e)$ in (\ref{tpii}), we will distinguish between whether $R_0$ is larger than or smaller than 1. When $R_0> 1$, or equivalently $\lambda>\mu$, we can rewrite (\ref{EetaPop}) as
\begin{equation}\label{ExpGrowthPopDyn}
\EE\left[\eta^{(\e)}(t+\Delta t)-\eta^{(\e)}(t)|\eta^{(\e)}(t)=i\right]
= \Delta t \cdot N m\left(\frac{i}{N}\right),
\end{equation}
where 
\beq
m(x) = rx + \frac{\e}{N} - \left[rx(0)^{-\theta}\cdot x + \frac{\e}{N}\right]x^\theta
\lb{mx}
\eeq
is a rescaled mean function of the drift, 
$r = \mu(R_0-1)$ is the intrinsic growth rate, or growth rate per capita, of a small population without immigration ($\e=0$), and
$$
x(0) = \left(\frac{R_0-1}{\alpha_1R_0+\alpha_2}\right)^{1/\theta}.
$$
We assume that $\alpha_1$ and $\alpha_2$ are large enough so that $x(0)<1$. A sufficient condition for this
is $\alpha_1+\alpha_2=1$. The carrying capacity $K(\e)=Nx(\e)$ of the environment is the value of $i$ such that the right hand side of (\ref{ExpGrowthPopDyn}) equals zero. We can write $x=x(\e)$ as the unique solution of $m(x)=0$, or equivalently
$$
x^\theta = \frac{rx+\e N^{-1}}{rx(0)^{-\theta}x+\e N^{-1}},
$$
with $x(\e)\searrow x(0)$ as $\e\to 0$. The conditional quasi stationarity distribution (\ref{tpii}) will be centered around $K(\e)$. In order to find a good approximation of this distribution we look at the second moment
$$
\begin{array}{l}
\EE\left\{\left[\eta^{(\e)}(t+\Delta t)-\eta^{(\e)}(t)\right]^2|\eta^{(\e)}(t)=i\right\}
= \Delta t \left[\lambda_{i,+}(\e)+\lambda_{i,-}(\e)\right]\\
\,\,\,\,\,\,\,\,\,\,\,\, = \Delta t \cdot N v\left(\frac{i}{N}\right),
\end{array}
$$
of the drift of $\eta^{(\e)}$, with 
\beq
v(x) = \lambda x(1-\alpha_1 x^\theta) + \frac{\e}{N}(1-x^\theta) + \mu x(1+\alpha_2 x^\theta).
\lb{vx}
\eeq
When $N$ is large we may approximate the conditional quasi stationary distribution
\beq
\begin{array}{rcl}
\tilde{\pi}_i(\e) &\approx & \int_{i-}^{i+} f^{(\e)}(k)dk\\
&=& \int_{i-}^{i+} f^{(0)}(k)dk + \int_{i-}^{i+} \left.\frac{df^{(\e)}(k)}{d\e}\right|_{\e=0} dk \cdot \e + o(\e),
\end{array}
\lb{tpiSens}
\eeq
by integrating a density function $f^{(\e)}$ on $[0,N]$ between $i_-=\max(0,i-1/2)$ and $i_+=\min(N,i+1/2)$. 
This density function can be found through a diffusion argument as the stationary density 
\beq
\begin{array}{rcl}
f^{(\e)}(k) &\propto & \frac{1}{Nv(\frac{k}{N})}\exp\left(2\int_{K(\e)}^k \frac{Nm(\frac{y}{N})}{Nv(\frac{y}{N})}dy\right)\\
&\propto & \frac{1}{v(\frac{k}{N})}\exp\left(2\int_{K(\e)}^k \frac{m(\frac{y}{N})}{v(\frac{y}{N})}dy\right)
\end{array}
\lb{fek}
\eeq
of Kolmogorov's forward equation, with a proportionality constant chosen so that $\int_0^N f^{(\e)}(k)dk = 1$ (see for instance Chapter 9 of Crow and Kimura, 1970). A substitution of variables $x=y/N$ in (\ref{fek}), and a Taylor expansion of $m(x)$ around $x(\e)$ reveals that 
the diffusion density is approximately normally distributed 
\beq
f^{(\e)} \sim N\left(K(\e),N\frac{v\left[x(\e)\right]}{2|m^\prime\left[x(\e)\right]|}\right).
\lb{Normal}
\eeq
Expansion (\ref{tpiSens}) is valid for small migration rates $\e$, and its linear term quantifies how sensitive the conditional quasi stationary distribution is to a small amount of immigration. 

It follows from (\ref{ExpGrowthPopDyn}) that the expected population size 
$$
\EE\left[\eta^{(0)}(t+\Delta t)-\eta^{(0)}(t)|\eta^{(0)}(t)=i\right] = \Delta t \cdot ri \left[1 -\left(\frac{i}{K(0)}\right)^{\theta}\right]
$$
of the null model $\e=0$ follows a so called theta logistic model (Gilpin and Ayala, 1973), which is a special case of the generalized growth curve model in Tsoularis and Wallace (2002). The theta logistic model has  a carrying capacity $K(0)$ of the environment to accommodate new births. When $\theta=1$, we obtain the logistic growth model of Verhulst (1838). Pearl (1920) used such a curve to approximate population growth in the United States, and Feller (1939) introduced a stochastic version of the logistic model in terms of a Markov birth-death process (\ref{FExp}) in continuous time. Feller's approach has been extended for instance by Kendall (1949), Whittle (1957), and N{\aa}sell (2001,2003). In particular, N{\aa}sell studied the quasi stationary distribution (\ref{quasi}) of $\eta^{(\e)}$, with $A=\{1,\ldots,N\}$. In this paper the previously studied population growth models are generalized in two directions; we consider semi-Markov processes and allow for theta logistic expected growth.    

When $0<R_0<1$, or equivalently $0<\lambda<\mu$, we rewrite (\ref{EetaPop}) as
\begin{equation}\label{ExpDecrPopDyn}
\begin{array}{l}
\EE\left[\eta^{(\e)}(t+\Delta t)-\eta^{(\e)}(t)|\eta^{(\e)}(t)=i\right]\\
\,\,\,\,\,\,\,\, = \Delta t \cdot \left[\nu-ri -(\nu+ri\tilde{x}^{-\theta})\left(\frac{i}{N}\right)^\theta\right],
\end{array}
\end{equation}
where $r=(1-R_0)\mu$ quantifies per capita decrease for a small population without immigration, and 
$\tilde{x}= \left[(1-R_0)/(\alpha_1R_0+\alpha_2)\right]^{1/\theta}$ is the fraction of the maximal population size at which the per capita decrease of an isolated $\e=0$ population has doubled to $2r$. For large $N$, we can neglect all $O(N^{-\theta})$ terms, and it follows from (\ref{tpii0}) that
$$
\tilde{\pi}_i(\e) \approx \frac{1}{\log(1-R_0)}\cdot\frac{R_0^i}{i} + \tilde{c}_i[1]\e + o(\e),
$$
for $i=1,\ldots,N$. 

A second perturbation scenario has a birth rate
\begin{equation}\label{PopDynPert2}
\lambda = \lambda(\e) = \e
\end{equation}
that equals $\e$, whereas all other parameters are kept fixed, not depending on $\e$. Again, more general nonlinear functions $\lambda(\e)$ can be studied, but for simplicity assume that (\ref{PopDynPert2}) holds. The unperturbed $\e=0$ model corresponds to a sink population that only increases through immigration. In view of (\ref{H1H3}), it satisfies Condition $\mathop{\mbox{\bf H}}_1$ of Subsection 2.2. Suppose $N$ is large. If $\nu =o(N)$, it follows from (\ref{pii}) that the stationary distribution for small values of $\e$ is well approximated by 
$$
\pi_i(\e) \approx \frac{(\nu/\mu)^i}{i!}e^{-\nu/\mu} + c_i[1]\e + o(\e)
$$
for $i=0,\ldots,N$, a Poisson distribution with mean $\nu/\mu$, corrupted by a sensitivity term $c_i[1]\e$ due to births. If $\nu=VN$, the carrying capacity of the environment is $K(\e)=Nx(\e)$, where $x=x(\e)$ is the value of $i/N$ in (\ref{ExpDecrPopDyn}) such that the right hand side vanishes, i.e.\ the unique solution of the equation
$$
rx +V x^\theta+r\tilde{x}^{-\theta}x^{\theta+1} = V,
$$
with $r=r(\e)=\mu-\e$. The stationary distribution (\ref{pii}) is well approximated by a discretized normal distribution (\ref{tpiSens})-(\ref{Normal}), but with a mean drift function $m(x)$ obtained from (\ref{ExpDecrPopDyn}), and a variance function $v(x)$ derived similarly.  \\

{\bf 3.2. Perturbed epidemic models} \\

In order to model an epidemic in a population of size $N$, we let $\eta^{(\e)}(t)$ refer to the number of infected individuals at time $t$, whereas the remaining $N-\eta^{(\e)}(t)$ are susceptible. We assume that   
\begin{equation}\label{la+Epid}
\lambda_{i,+}(\e) = \lambda i \left(1-\frac{i}{N}\right) + \nu (N-i),
\end{equation}
and 
\begin{equation}\label{la-Epid}
\lambda_{i,-}(\e) = \mu i,
\end{equation}
where $\lambda(N-1)/N$ is the contact rate between each individual and other members of the population. 
This term may also be written as the product of the force of infection $\lambda i/N$ caused by $i$ infected individuals, and the number of susceptibles $N-i$.  
The second parameter $\nu$ is the contact rate between each individual and the group of infected ones outside of the population. The third parameter $\mu$ is the recovery rate per individual. It may also include a combined death and birth of an infected and susceptible individual. 
The model in (\ref{la+Epid})-(\ref{la-Epid}) is an SIS-epidemic, since infected individuals become susceptible after recovery. It is essentially a special case of (\ref{la+PopDyn})-(\ref{la-PopDyn}), with $\theta_1=\theta_2=\theta_3=1$, $\alpha_1=1$ and $\alpha_2=0$, although immigration is parametrized differently in (\ref{la+PopDyn}) and (\ref{la+Epid}).  

Assume that the external contact rate
\begin{equation}
\nu = \nu(\e) = \e
\label{ExtRate}
\end{equation}
equals the perturbation parameter, whereas all other parameters are kept fixed, not depending on $\e$. The unperturbed $\e=0$ model refers to an isolated population without external contagion. Sooner or later the epidemic will then die out and reach the only absorbing state 0. This corresponds to Condition $\mathop{\mbox{\bf H}}_2$ of Subsection 2.2. 

Weiss and Dishon (1971) first formulated the SIS-model as a continuous time birth-death Markov process (\ref{FExp}) without immigration ($\e=0$). It has since then been extended in a number of directions, see for instance Cavender (1978), Kryscio and Lef{\'e}vre (1989), Jacquez and O'Neill (1991), Jacquez and Simon (1993), N{\aa}sell (1996,1999) and Allen and Burgin (2000). The quasi stationary distribution (\ref{quasi}) of $\eta^{(\e)}$ is studied in several of these papers. 

In this work we generalize previously studied models of epidemic spread by treating discrete and continuous time in a unified manner through semi Markov processes.

The expected growth rate of the null model $\e=0$ satisfies
\begin{equation}\label{ExpGrowthEpid}
\EE\left[\eta^{(\e)}(t+\Delta t)-\eta^{(\e)}(t)|\eta^{(\e)}(t)=i\right] = \Delta t \cdot ri \left(1-\frac{i}{K(0)}\right),
\end{equation}
if $0<i<N$, when the basic reproduction ratio $R_0=\lambda/\mu$ exceeds 1. This implies that the expected number of infected individuals follows Verhult's logistic growth model, with intrinsic growth rate $r=\mu(R_0-1)$, and a carrying capacity $K(0)=N(1-R_0^{-1})$ of the environment. This is a special case of the theta logistic mean growth curve model (\ref{ExpGrowthPopDyn}), with $\theta=1$. When $R_0<1$, we similarly write the expected population decline as in (\ref{ExpDecrPopDyn}), with $\theta=1$. Since the SIS model is a particular case of the population dynamic models of Subsection 3.1, the stationary and conditional quasi stationary distributions are obtained in the same way. \\

{\bf 3.3.  Perturbed models of population genetics} \\

Let $N$ be a positive even integer, and consider a one-sex population with $N/2$ individuals, each one of which carries two copies of a certain gene. This gene exists in two variants (or alleles); $A_1$ and $A_2$. Let $\eta^{(\e)}(t)$ be the number of gene copies with allele $A_1$ at time $t$. Consequently, the remaining $N-\eta^{(\e)}(t)$ gene copies have the other allele $A_2$ at time $t$. At each moment $\zeta^{(\e)}_n$ of jump in (\ref{zetan}), a new gene copy replaces an existing one, so that 
\begin{equation}\label{etaJump}
\eta^{(\e)}(\zeta^{(\e)}_n) = \left\{ \begin{array}{ll}
\eta^{(\e)}(\zeta^{(\e)}_n-) + 1, & \mbox{if $A_1$ replaces $A_2$},\\
\eta^{(\e)}(\zeta^{(\e)}_n-), & \mbox{if $A_k$ replaces $A_k$},\\
\eta^{(\e)}(\zeta^{(\e)}_n-) - 1, & \mbox{if $A_2$ replaces $A_1$}.
\end{array}
\right. 
\end{equation}
In discrete time (\ref{FGeo}) we define $\lambda_{ij}(\e)$ as the probability that the number of $A_1$ alleles changes from $i$ to $j$ when a gene copy is replaced, at each time step. In continuous time (\ref{FExp}) we let $\lambda_{ij}(\e)$ be the rate at which the number of $A_1$ alleles changes from $i$ to $j$ when a gene copy replacement occurs. Let $x^{\ast\ast}$ refer to the probability that the new gene copy has variant $A_1$ when the fraction of $A_1$-alleles before replacement is $x=i/N$. We further assume that the removed gene copy is chosen randomly among all $N$ gene copies, with equal probabilities $1/N$, so that
\begin{equation}\label{laij}
\lambda_{ij}(\e) = \left\{\begin{array}{ll}
x^{\ast\ast}(1-x), & j=i+1,\\
(1-x^{\ast\ast})x, & j=i-1,\\
1-x^{\ast\ast}(1-x)-(1-x^{\ast\ast})x, & j=i.
\end{array}
\right.
\end{equation}
Notice that in order to make $\eta^{(\e)}(t)$ a semi-Markov process of birth-death type that satisfies (\ref{laDef})-(\ref{pDef}), we do not regard instances when the new gene copy replaces a gene copy with the same allele as a moment of jump, if the current number $i$ of $A_1$ alleles satisfies $0<i<N$. That is, the second line on the right hand side of (\ref{etaJump}) is only possible in a homogeneous population where all gene copies have the same allele $A_1$ or $A_2$, and therefore $\lambda_{ii}(\e)$ is not included in the probability or rate $\lambda_i(\e)$ to leave state $i$ in (\ref{laDef}), when $0<i<N$. 

The choice of $x^{\ast\ast}$ will determine the properties of the model. 
The new gene copy is formed in two steps.
In the first step a pair of genes is drawn randomly with replacement, so that its genotype is $A_1A_1$, $A_1A_2$ and $A_2A_2$ with probabilities $x^2$, $2x(1-x)$ and $(1-x)^2$ respectively. Since the gene pair is drawn with replacement, this corresponds to a probability $2/N$ that the two genes originate from the same individual (self fertilization). A gene pair survives with probabilities proportional to $1+s_1$, $1$ and $1+s_2$ for these three genotypes, where $1+s_1\ge 0$ and $1+s_2\ge 0$ determine the fitnesses of genotypes $A_1A_1$ and $A_2A_2$ relative to that of genotype $A_1A_2$. This is repeated until a surviving gene pair appears, from which a gene copy is picked randomly. Consequently, the probability is 
\begin{equation}\label{xast}
x^\ast = \frac{1\cdot (1+s_1)x^2 + \frac{1}{2}\cdot 2x(1-x)}{(1+s_1)x^2 + 2x(1-x) + (1+s_2)(1-x)^2}
\end{equation}
that the chosen allele is $A_1$. In the second step, before the newly formed gene copy is put into the population, an $A_1$ allele mutates with probability $u_1=\PP(A_1\to A_2)$, and an $A_2$ allele with probability $u_2=\PP(A_2\to A_1)$. This implies that
\begin{equation}\label{xastast}
x^{\ast\ast} = (1-u_1)x^\ast + u_2(1-x^\ast).
\end{equation}
By inserting (\ref{xastast}) into (\ref{laij}), and (\ref{laij}) into (\ref{laDef})-(\ref{pDef}) we get a semi-Markov process of Moran type that describes the time dynamics of two alleles in a one-sex population in the presence of selection and mutation. A special case of it was originally introduced by Moran (1958a), and some of its properties can be found, for instance, in Karlin and McGregor (1962) and Durrett (2008). The model incorporates a number of different selection scenarios. A selectively neutral model corresponds to all three geneotypes having the same fitness ($s_1=s_2=0$), for directional selection, one of the two alleles is more fit than the other ($s_1<0<s_2$ or $s_1>0>s_2$), an underdeominant model has a heterozygous genotype $A_1A_2$ with smaller fitness than the two homozygous genotypes $A_1A_1$ and $A_2A_2$ ($s_1,s_2>0$), whereas overdominance or balancing selection means that the heterozygous genotype is the one with highest fitness ($s_1,s_2<0$).    

In continuous time (\ref{FExp}), the expected value of the Moran model satisfies a differential equation
\begin{equation}\label{Eeta}
\begin{array}{l}
\EE\left[\eta^{(\e)}(t+\Delta t)-\eta^{(\e)}(t)|\eta^{(\e)}(t)=Nx\right] = \Delta t \left[\lambda_{i,+}(\e)-\lambda_{i,-}(\e)\right]\\
\,\,\,\,\,\,\,\,\,\, = \Delta t \left[x^{\ast\ast}(1-x)-x(1-x^{\ast\ast})\right]\\
\,\,\,\,\,\,\,\,\,\, = \Delta t (x^{\ast\ast}-x)\\
\,\,\,\,\,\,\,\,\,\, = \Delta t \left[(1-u_1-u_2)x^{\ast}+u_2-x\right]\\
\,\,\,\,\,\,\,\,\,\, = \Delta t \left[(1-u_1-u_2)\frac{x+s_1x^2}{1+s_1x^2+s_2(1-x)^2}+u_2-x\right]\\
\,\,\,\,\,\,\,\,\,\, =: \Delta t \left[N^{-1}m(x) + o(N^{-1})\right],
\end{array}
\end{equation}
whenever $0<x<1$, with $\Delta t>0$ infinitesimal. The discrete time Moran model (\ref{FGeo}) also satisfies (\ref{Eeta}), interpreted as a difference equation, with $\Delta t=1$. In the last step of (\ref{Eeta}) we assumed that all mutation and selection parameters are inversely proportional to population size;
\begin{equation}\label{SmallPar}
\begin{array}{rcl}
u_1 &=& U_1/N,\\
u_2 &=& U_2/N,\\
s_1 &=& S_1/N,\\
s_2 &=& S_2/N,
\end{array}
\end{equation}
and introduced an infinitesimal drift function 
$$
m(x) = U_2(1-x)-U_1x + \left[(S_1+S_2)x-S_2\right]x(1-x).
$$
The corresponding infinitesimal variance function $v(x)=2x(1-x)$ follows similarly from (\ref{SmallPar}), according to 
\begin{equation}\label{Veta}
\begin{array}{l}
\VV\left[\eta^{(\e)}(t+\Delta t)|\eta^{(\e)}(t)=Nx\right] 
= \Delta t \left[\lambda_{i,+}(\e)+\lambda_{i,-}(\e)+O(N^{-1})\right]\\
\,\,\,\,\,\,\,\,\,\, = \Delta t \left[x^{\ast\ast}(1-x)+x(1-x^{\ast\ast})+O(N^{-1})\right]\\
\,\,\,\,\,\,\,\,\,\, = \Delta t \left[2x(1-x) + O(u_1+u_2+|s_1|+|s_2|)+O(N^{-1})\right]\\
\,\,\,\,\,\,\,\,\,\, =: \Delta t \left[v(x) +O(N^{-1})\right].
\end{array}
\end{equation}
The stationary distribution is found by first inserting (\ref{laij}) into (\ref{laDef}), and then (\ref{laDef}) into (\ref{pii}). However, for large $N$, it is often convenient to use a diffusion approximation
\beq
\pi_{i}(\e) \approx \int_{x_{i,-}}^{x_{i,+}} f^{(\e)}(x)dx,
\lb{piNx}
\eeq
by integrating the density function 
\beq
\begin{array}{rcl}
f^{(\e)}(x) &\propto &\frac{1}{v(x)}\exp\left(2\int_{1/2}^x \frac{m(y)}{v(y)}dy\right)\\
&\propto & (1-x)^{-1+U_1}x^{-1+U_2}\exp\left[\frac{1}{2}(S_1+S_2)x^2-S_2x\right]
\end{array}
\lb{fex}
\eeq
between $x_{i,-}=\max\left[0,(i-1/2)/N\right]$ and $x_{i,+}=\min\left[1,(i+1/2)/N\right]$. This density function is defined terms of the infinitesimal drift and variance functions $m(x)$ and $v(x)$ in (\ref{Eeta})-(\ref{Veta}), with a constant of proportionality chosen so that $\int f^{(\e)}(x)dx=1$. See for instance Chapter 9 of Crow and Kimura (1970) and Chapter 7 of Durrett (2008) for details. 
 
Assume that $N$ is fixed, whereas the perturbation parameter $\e$ varies. 
We let the two selection parameters $s_1$ and $s_2$, and hence also the rescaled selection parameters $S_1$ and $S_2$, be independent of $\e$, whereas the rescaled mutation parameters satisfy
\begin{equation}\label{ue}
\begin{array}{rcl}
U_1 &=& U_1(\e) = C_{1} + D_{1}\e,\\
U_2 &=& U_2(\e) = C_{2} + D_{2}\e,
\end{array}
\end{equation}
for some non-negative constants $C_{1},D_{1},C_{2},D_{2}$, where at least one of $D_1$ and $D_2$ is strictly positive.  
It follows from (\ref{laDef}), (\ref{H1H3}) and (\ref{laij}) that the values of $0\le C_{1},C_{2} < 1$ will determine the properties of the unperturbed $\e=0$ model, according to the three distinct scenarios
$$
\begin{array}{ll}
\mathop{\mbox{\bf H}}_1: & C_{1}>0, C_{2}>0,\\  
\mathop{\mbox{\bf H}}_2: & C_{1}>0, C_{2}=0,\\  
\mathop{\mbox{\bf H}}_3: & C_{1}=0, C_{2}=0.
\end{array}
$$
The null model $\e=0$ incorporates two-way mutations $A_1\to A_2$ and $A_2\to A_1$ for Perturbation scenario $\mathop{\mbox{\bf H}}_1$, with no absorbing state, it has one-way mutations $A_1\to A_2$ for Perturbation scenario $\mathop{\mbox{\bf H}}_2$, with $i=0$ as absorbing state, and no mutations for Perturbation scenario $\mathop{\mbox{\bf H}}_3$, with $i=0$ and $i=N$ as the two absorbing states. 

For $\mathop{\mbox{\bf H}}_1$ we find an approximate first order series expansion
$$
\pi_{i}(\e) \approx \int_{x_{i,-}}^{x_{i,+}} f^{(0)}(x)dx
+ \int_{x_{i,-}}^{x_{i,+}} \left. \frac{df^{(\e)}(x)}{d\e}\right|_{\e=0}dx \cdot \e + o(\e)
$$
(\ref{statar}) of the stationary distribution
by inserting (\ref{ue}) into (\ref{piNx})-(\ref{fex}). The null density $f^{(0)}(x)$ is defined 
(\ref{fex}), with $C_{1}$ and $C_{2}$ instead of $U_1$ and $U_2$. 
For a neutral model ($S_1=S_2=0$), the stationary null distribution is approximately beta with parameters $C_1$ and $C_2$, and expected value $C_2/(C_1+C_2)$. 
A model with $S_1>0>S_2$ corresponds to directional selection, with higher fitness for $A_1$ compared to $A_2$. 
It can be seen from (\ref{fex}) that the stationary null distribution is further skewed to the right compared that of a neutral model. A model with balancing selection or overdominance has negative $S_1$ and $S_2$, so that the heterozygous genotype $A_1A_2$ has a selective advantage. The stationary null distribution will then have a peak around $S_2/(S_1+S_2)$. On the other hand, for an underdominant model where $S_1$ and $S_2$ are both positive, the heterozygous genotype will have a selective disadvantage. Then $S_2/(S_1+S_2)$ functions as a repelling point of the stationary null distribution.  

For Scenario $\mathop{\mbox{\bf H}}_2$, the null model has one absorbing state 0. In analogy with (\ref{NonExtinctProb}), we find that the series expansion (\ref{statar}) of the stationary probability of no $A_1$ alleles in the population, is 
$$
\pi_0(\e) = 1-\EE_1(\tau_0^{(\e)})\cdot\frac{D_2\e}{N} + o(\e),
$$
for small values of the perturbation parameter, if $D_2>0$. Here $D_2\e/N$ is the probability that a mutation $A_2\to A_1$ occurs in a homogeneous $A_2$ population, and $\tau_0^{(\e)}$ is the time it takes for the $A_1$ allele to disappear again. 

The conditional quasi stationary distribution (\ref{quasiF2}) is found by inserting (\ref{laij}) into (\ref{laDef}), and then (\ref{laDef}) into (\ref{tpii})-(\ref{tpii0}). After some computations this leads to
\beq
\begin{array}{rcl}
\tilde{\pi}_i(\e) &\approx & \tilde{c}_1[0]i^{-1}\left(1-\frac{i-1}{N}\right)^{C_1-1}
\exp\left[\frac{1}{2}(S_1+S_2)\frac{i-1}{N}\frac{i}{N}-S_2\frac{i-1}{N}\right]\\
&+& \tilde{c}_1[1]\e + o(\e)
\end{array}
\lb{tpiiMoran}
\eeq
for $i=1,\ldots,N$, where $\tilde{c}_1[0]$ is chosen so that $\sum_{i=1}^N \tilde{\pi}_i(0)=1$, and $\tilde{c}_1[1]$ will additionally involve $D_1$ and $D_2$. If $D_2=0$, we have that $\pi_0(\e)=1$ for all $0<\e\le\e_0$, so that the conditional quasi stationary distribution (\ref{quasiF2}) is not well defined. However, the time to reach absorption is very large for small $U_1>0$. It is shown in H\"{o}ssjer et al.\ (2016) that $\eta^{(\e)}$ may be quasi-fixed for a long time at the other boundary point $i=N$, before eventual absorption at $i=0$ occurs. 
 
For Scenario $\mathop{\mbox{\bf H}}_3$, the null model is mutation free, and the asymptotic distribution 
$$
P_j(0) = \lim_{t\to\infty} \PP_i(\eta^{(0)}(t)=j)
$$
is supported on the two absorbing states ($j\in\{0,N\}$). For a neutral model ($s_1=s_2=0$), we have that 
\beq
P_N(0) = 1-P_0(0) = \frac{i}{N}.
\lb{pi0Neutr}
\eeq
A particular case of directional selection is  multiplicative fitness, with $1+s_1=(1+s_2)^{-1}$. It is mathematically simpler since selection operates directly on alleles, not on genotypes, with selective advantages $1$ and $1+s_2$ for $A_1$ and $A_2$. It follows for instance from Section 6.1 of Durrett (2008) that  
\beq
P_N(0) = 1-P_0(0) = \frac{1-(1+s_2)^i}{1-(1+s_2)^N}
\lb{pi0Mult}
\eeq
for multiplicative fitness. Notice that $P_0(0)$ and $P_N(0)$ will differ from 
$\pi_j(0)=\lim_{\e\to 0}\pi_0(\e)$ at the two boundaries. Indeed, by ergodicity (\ref{statik}) for each $\e>0$, 
the latter two probabilities are not functions of $i=\eta^{(0)}(0)$. 
From (\ref{piNx})-(\ref{fex}) we find that
\beq
\pi_N(0) = 1-\pi_0(0) \approx \frac{D_2}{\exp\left[-\frac{1}{2}(S_1-S_2)\right]D_1+D_2}.
\lb{piFix}
\eeq
Similarly as in (\ref{tpiiMoran}), we find after some computations that the conditional quasi stationary distribution (\ref{quasiF3}) and (\ref{pihat}) admits an approximate expansion 
\beq
\begin{array}{rcl}
\hat{\pi}_i(\e) &\approx & \hat{c}_1[0]i^{-1}\left(1-\frac{i-1}{N}\right)^{-1}
\exp\left[\frac{1}{2}(S_1+S_2)\frac{i-1}{N}\frac{i}{N}-S_2\frac{i-1}{N}\right]\\
&+& \hat{c}_1[1]\e + o(\e)
\end{array}
\lb{hpiiMoran}
\eeq
for $i=1,\ldots,N-1$, where $\hat{c}_1[0]$ is chosen so that $\sum_{i=1}^{N-1} \hat{\pi}_i(0)=1$, and $\hat{c}_1[1]$ will additionally involve $D_1$ and $D_2$. Notice that the limiting fixation probabilities in (\ref{piFix}) are functions of the mutation probability ratio $D_1/(D_1+D_2)$, but the limiting conditional quasi stationary distribution $\hat{\pi}_i(0)$ in (\ref{hpiiMoran}) does not involve any of $D_1$ or $D_2$.  \\

{\bf 4. Reduced semi-Markov processes} \\

In this section, we present an  time-space screening procedures of phase space reduction for perturbed semi-Markov processes and recurrent algorithms for computing expectations of hitting times and stationary and conditional quasi-stationary distributions for birth-death semi-Markov processes. \\

{\bf 4.1. An algorithm of excluding one state} \\

Let us assume that $N \geq 1$. Let us choose 
some state $r \in \XX$.  We can  consider the reduced phase space $_r\XX = \XX \setminus \{ r \}$,  with the state $r$ excluded from the 
phase space $\XX$. 

Let us define  the sequential moments of hitting the reduced space 
$_r\XX$,  by the embedded Markov chain $\eta^{(\e)}_n$,
\begin{equation}\label{sequen}
_r\xi^{(\e)}_n = \min(k > \, _r\xi^{(\e)}_{n-1}, \  \eta^{(\e)}_k \in \, _r\XX), \ n = 1, 2, \ldots, \  _r\xi^{(\e)}_0 = 0.
\end{equation}

Now, let us define the random sequence,
\begin{equation}\label{sequena}
(_r\eta^{(\e)}_n, \, _r\kappa^{(\e)}_n) = \left\{
\begin{array}{ll}
(\eta^{(\e)}_0, 0) & \ \text{for}  \ n = 0, \vspace{2mm} \\
(\eta^{(\e)}_{_r\xi^{(\e)}_n} \,, 
\sum_{k = \, _r\xi^{(\e)}_{n-1} +1}^{_r\xi^{(\e)}_n} \kappa^{(\e)}_k) & \ \text{for}  \ n = 1, 2, \ldots. 
\end{array}
\right.
\end{equation}

This sequence is also a Markov renewal process with a phase space $\XX \times [0, \infty)$,   the initial distribution
with probability $\bar{p}^{(\e)}$, and transition probabilities defined for 
$(i, s), (j, t) \in \, \XX \times [0, \infty)$, 
\begin{equation}\label{semir}
_rQ^{(\e)}_{ij}(t) = \PP \{ \, _r\eta^{(\e)}_{1} = j, \, _r\kappa^{(\e)}_{1} \leq t / \, _r\eta^{(\e)}_{0} = i, \, _r\kappa^{(\e)}_{0} = s \}. 
\end{equation}

Respectively, one can define the  transformed semi-Markov process, 
\begin{equation}\label{sepra}
_r\eta^{(\e)}(t) = \, _r\eta^{(\e)}_{_r\nu^{(\e)}(t)}, \ t \geq 0, 
\end{equation}
where $_r\nu^{(\e)}(t) = \max(n \geq 0: \, _r\zeta^{(\e)}_n \leq t)$ 
is a number of jumps at time interval  $[0, t]$, for $t \geq 0$, and $_r\zeta^{(\e)}_n = \, _r\kappa^{(\e)}_1 + \cdots + \, _r\kappa^{(\e)}_n, \ n = 0, 1, \ldots$ 
are sequential moments of jumps,  for the semi-Markov process $_r\eta^{(\e)}(t)$. 

The transition probabilities $_rQ^{(\e)}_{ij}(t)$ are expressed, for every $\e \in (0, \e_0]$,  via the transition probabilities $Q^{(\e)}_{ij}(t)$ by the 
following formula, for $t \geq 0, \, i, j \in \, \XX$, 
\begin{equation}\label{numjabo}
_rQ^{(\e)}_{ij}(t) = Q^{(\e)}_{ij}(t) + \sum_{n = 0}^\infty Q^{(\e)}_{ir}(t) * Q^{(\e) *n}_{rr}(t) * Q^{(\e)}_{rj}(t). 
\end{equation}

Here, symbol $*$ is used to denote the convolution of distribution functions (possibly improper),  and 
$Q^{(\e) *n}_{rr}(t)$ is the $n$ times convolution of the distribution function  $Q^{(\e)}_{rr}(t)$.  

Relation (\ref{numjabo}) directly implies, for every $\e \in (0, \e_0]$,  the following formula for transition probabilities of the reduced embedded Markov chain $_r\eta^{(\e)}_n$, for $i, j \in \XX$,
\begin{align}\label{transit}
_rp_{ij}(\e)  = \, _rQ^{(\e)}_{ij}(\infty)  & = p_{ij}(\e)  + \sum_{n = 0}^\infty p_{ir}(\e)  p_{rr}(\e)^n p_{rj}(\e)\nonumber \\
&  = p_{ij}(\e) + p_{ir}(\e) \frac{p_{rj}(\e)}{1 - p_{rr}(\e)}.
\end{align}

Note that condition ${\bf A}$ implies that  probabilities $p_{rr}(\e) \in [0, 1), \, r \in \XX, \, \e \in (0, \e_0]$.

The transition distributions for the Markov chain  $_r\eta^{(\e)}_n $  are concentrated, for every $\e \in (0, \e_0]$,  on the reduced phase space $_r\XX$, i.e., for every $i \in \XX$,
\begin{align}\label{sequenani}
\sum_{j \in \, _r\XX} \, _rp_{ij}(\e) & =  \sum_{j \in \, _r\XX} p_{ij}(\e) + p_{ir}(\e)  \sum_{j \in \, _r\XX} \frac{p_{rj}(\e) }{1 - p_{rr}(\e)} \nonumber \\
& =  \sum_{j \in \, _r\XX} p_{ij}(\e)  + p_{ir}(\e)   = 1.
\end{align}

If the initial distribution $\bar{p}^{(\e)}$ is concentrated on the phase space $_r\XX$, i.e., $p_r = 0$, then the random sequence  $(_r\eta^{(\e)}_n, \, _r\kappa^{(\e)}_n)$ 
can be considered as a Markov renewal process with the reduced  phase  $_r\XX \times [0, \infty)$, the initial distribution $_r\bar{p}^{(\e)} = \langle \, p_i^{(\e)} = 
\PP \{ _r\eta^{(\e)}_0 = i, \, _r\kappa^{(\e)}_0 = 0 \}  =  \PP \{   _r\eta^{(\e)}_0  = i \} , i \in \, _r\XX \rangle$ and transition probabilities $_rQ^{(\e)}_{ij}(t), t \geq 0, i, j \in \, _r\XX$.

If the initial distribution $\bar{p}^{(\e)}$ is not concentrated on the phase space $_r\XX$, i.e., $p_r > 0$, then the random sequence   $(_r\eta^{(\e)}_n, \, _r\kappa^{(\e)}_n)$  
can be interpreted as a Markov renewal process with so-called transition period.

As follows from the above remarks, the  semi-Markov process $_r\eta^{(\e)}(t)$ has transition probabilities $_rQ^{(\e)}_{ij}(t), t \geq 0, i, j \in \, \XX$ 
concentrated on the reduced phase space  $_r\XX$, which can be interpreted as the ``actual  reduced'' phase space of this semi-Markov process $_r\eta^{(\e)}(t)$. 

If the initial distribution $\bar{p}^{(\e)}$ is concentrated on the phase space $_r\XX$, then process $_r\eta^{(\e)}(t)$ can be considered as the semi-Markov process  
with the reduced  phase  $_r\XX$, the initial distribution $_r\bar{p}^{(\e)} = \langle \, _rp^{(\e)}_i = \PP \{_r\eta^{(\e)}(0)  = i \}, i \in \, _r\XX \rangle$ and transition probabilities 
$_rQ^{(\e)}_{ij}(t), t \geq 0,  i, j \in \, _r\XX$.

According to the above remarks, we can refer to the process $_r\eta^{(\e)}(t)$ as a reduced semi-Markov process. 

If the initial distribution $\bar{p}^{(\e)}$ is not concentrated on the phase space $_r\XX$, then the process $_r\eta^{(\e)}(t)$ can be interpreted 
as a reduced semi-Markov process with transition period. \\

{\bf 4.2. Expectations of hitting times for reduced semi-Markov  \\
\makebox[14mm]{} processes and stationary distributions} \\

 Let us now introduce hitting times for semi-Markov process $\eta^{(\e)}(t)$, 
Let us define hitting times, which are random variables given by the following relation, for $j \in \XX$,
\begin{equation}\label{hitt}
\tau^{(\e)}_j = \sum_{n = 1}^{\nu^{(\e)}_j} \kappa^{(\e)}_n,
\end{equation}
where $\nu^{(\e)}_j = \min(n \geq 1: \eta^{(\e)}_n = j)$.

Let us denote,
\begin{equation}\label{hitaex}
E_{ij}(\e) = \EE_i \tau^{(\e)}_j, \ i, j \in \XX.
\end{equation}

As is known, conditions ${\bf A}$ -- ${\bf C}$ imply that, for every $\e \in (0, \e_0]$, expectations of hitting times are finite, i.e, 
\begin{equation}\label{hitaexas}
0 < E_{ij}(\e) < \infty, \ i, j \in \XX. 
\end{equation}

We also denote by $_r\tau^{(\e)}_j$ the hitting time to the state $j \in \, _r\XX$ for the reduced semi-Markov process $_r\eta^{(\e)}(t)$. 

The following theorem, which proof can be found, for example,  in Silvestrov and Manca (2015), plays the key role in what follows. \vspace{1mm}
 \begin{theorem} Let conditions ${\bf A}$ --  ${\bf C}$ hold for semi-Markov processes $\eta^{(\e)}(t)$. Then, for any state $j \in \, _r\XX$, the first hitting times $\tau^{(\e)}_j$ and $_r\tau^{(\e)}_j$ to the state $j$, respectively, for semi-Markov processes $\eta^{(\e)}(t)$ and $_r\eta^{(\e)}(t)$, coincide, and, thus, the expectations of hitting times $E_{ij}(\e) = \EE_i \tau^{(\e)}_j = \EE_i \, _r\tau^{(\e)}_j$, for any $i \in \XX,  j \in \, _r\XX$ and $\e \in (0, \e_0]$.
\end{theorem}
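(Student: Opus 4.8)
The plan is to show that excluding the state $r$ from the phase space, via the construction in Subsection 4.1, does not alter the trajectory of the semi-Markov process as seen by an observer who only cares about visits to states in $_r\XX$, and in particular does not alter the first passage to any target state $j \in \, _r\XX$. First I would unwind the definitions: by (\ref{sequena}), the reduced Markov renewal process $(_r\eta^{(\e)}_n, \, _r\kappa^{(\e)}_n)$ records the successive values of the original embedded chain $\eta^{(\e)}_n$ at the times $_r\xi^{(\e)}_n$ when it lies in $_r\XX$, together with the accumulated sojourn time $\sum_{k = \, _r\xi^{(\e)}_{n-1}+1}^{_r\xi^{(\e)}_n} \kappa^{(\e)}_k$ since the previous such visit. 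Consequently the jump times $_r\zeta^{(\e)}_n = \sum_{m=1}^{n} \, _r\kappa^{(\e)}_m$ of the reduced process are exactly the subsequence $\zeta^{(\e)}_{_r\xi^{(\e)}_n}$ of jump times of the original process corresponding to indices at which $\eta^{(\e)}$ visits $_r\XX$; this is a telescoping identity and should be recorded as a lemma-free observation.

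Next I would analyze the first hitting time. Fix $j \in \, _r\XX$ and let $\nu^{(\e)}_j = \min(n \geq 1 : \eta^{(\e)}_n = j)$ be the first index (for the original chain) at which state $j$ is reached, and let $_r\nu^{(\e)}_j = \min(n \geq 1 : \, _r\eta^{(\e)}_n = j)$ be the analogue for the reduced chain. Since $j \neq r$, every visit of $\eta^{(\e)}$ to $j$ occurs at some index that belongs to the sequence $\{ _r\xi^{(\e)}_n \}_{n \geq 0}$; conversely $_r\eta^{(\e)}_n = j$ precisely when $\eta^{(\e)}_{_r\xi^{(\e)}_n} = j$. Because the map $n \mapsto \, _r\xi^{(\e)}_n$ is strictly increasing and its range contains every index at which $\eta^{(\e)}$ lands in $_r\XX$ (in particular every index at which it equals $j$), the first index $k$ with $\eta^{(\e)}_k = j$ equals $_r\xi^{(\e)}_{m}$ for $m = \, _r\nu^{(\e)}_j$; that is, $\nu^{(\e)}_j = \, _r\xi^{(\e)}_{_r\nu^{(\e)}_j}$. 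This is the combinatorial heart of the argument and I expect it to be the main point requiring care, since one must check that no visit to $j$ is "skipped" by the screening procedure — which holds exactly because $j$ is never the excluded state $r$.

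Having this, the hitting times coincide by the telescoping identity: by (\ref{hitt}),
\[
\tau^{(\e)}_j = \sum_{n=1}^{\nu^{(\e)}_j} \kappa^{(\e)}_n = \sum_{n=1}^{_r\xi^{(\e)}_{_r\nu^{(\e)}_j}} \kappa^{(\e)}_n = \sum_{m=1}^{_r\nu^{(\e)}_j} \Bigl( \sum_{k = \, _r\xi^{(\e)}_{m-1}+1}^{_r\xi^{(\e)}_m} \kappa^{(\e)}_k \Bigr) = \sum_{m=1}^{_r\nu^{(\e)}_j} \, _r\kappa^{(\e)}_m = \, _r\tau^{(\e)}_j,
\]
where the third equality partitions $\{1, \ldots, \, _r\xi^{(\e)}_{_r\nu^{(\e)}_j}\}$ into the consecutive blocks defined by the $_r\xi^{(\e)}_m$, and the fourth uses the definition (\ref{sequena}) of $_r\kappa^{(\e)}_m$. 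This is a pathwise identity valid for every realization starting from $\eta^{(\e)}(0) = i$ with $i \in \XX$ and every $j \in \, _r\XX$; taking $\EE_i$ of both sides gives $E_{ij}(\e) = \EE_i \tau^{(\e)}_j = \EE_i \, _r\tau^{(\e)}_j$ for all $i \in \XX$, $j \in \, _r\XX$, $\e \in (0,\e_0]$.

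Two small caveats I would address. First, one must know these hitting times are almost surely finite, so that the sums above have only finitely many terms and the rearrangement is legitimate; this is exactly (\ref{hitaexas}), which follows from conditions ${\bf A}$–${\bf C}$ and is already quoted in the excerpt, and it also guarantees $_r\nu^{(\e)}_j < \infty$ a.s. since $j$ is recurrent for the reduced chain. Second, condition ${\bf B}$ (no instantaneous transitions) ensures the $\kappa^{(\e)}_n$ are genuine positive increments and that $_r\eta^{(\e)}(t)$ is a well-defined semi-Markov process with the jump structure described, so the notion "$_r\tau^{(\e)}_j$" makes sense. With these in hand the proof is complete; the only genuinely nontrivial step is the index identity $\nu^{(\e)}_j = \, _r\xi^{(\e)}_{_r\nu^{(\e)}_j}$, and I would spend the bulk of the write-up making that airtight.
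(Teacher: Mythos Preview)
Your argument is correct: the pathwise index identity $\nu^{(\e)}_j = \, _r\xi^{(\e)}_{\, _r\nu^{(\e)}_j}$ (valid precisely because $j \neq r$, so every visit to $j$ appears in the screened subsequence) together with the telescoping decomposition of the sojourn times gives $\tau^{(\e)}_j = \, _r\tau^{(\e)}_j$ pathwise, and the finiteness from (\ref{hitaexas}) under conditions ${\bf A}$--${\bf C}$ makes the rearrangement legitimate.

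There is nothing to compare here, however, because the paper does not supply its own proof of this theorem: it simply cites Silvestrov and Manca (2015) and moves on. Your write-up is therefore a self-contained proof that the paper omits, and the pathwise route you take is the natural one for this statement.
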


In what follows, we use the following well known formula for stationary probabilities $\pi_i(\e), i \in \XX$, which takes place, for every $\e \in (0, \e_0]$, 
\begin{equation} \label{stationary}
\pi_i(\e) = \frac{e_i(\e)}{E_{ii}(\e)}, \ i \in \XX.
\end{equation} \vspace{2mm}

{\bf 4.3. Reduced semi-Markov processes of birth-death type} \\

Let the initial semi-Markov process  $\eta^{(\e)}(t)$ has a birth-death type. 

First, let consider the case, where the state $0$ is excluded from the phase space $\XX$. 
In this case, the reduced phase space $_0\XX = \ \{ 1, \ldots, N \}$. 

We assume that the initial distribution of the semi-Markov process $\eta^{(\e)}(t)$ 
is concentrated on the reduced phase space  $_0\XX$.

The transition probabilities of the reduced semi-Markov process $_0\eta^{(\e)}(t)$ has, for every $\e \in (0, \e_0]$,  the following form, for $t \geq 0$, 
\begin{equation}\label{emsemitak}
_0Q^{(\e)}_{ij}(t) = \left\{
\begin{array}{lll}
F^{(\e)}_{1, +}(t) p_{1, +}(\e)   &  \text{if} \ j =  2,  i = 1, \\ 
_0F^{(\e)}_{1, -}(t) p_{1, -}(\e) &  \text{if} \ j =  1,   i = 1, \\ 
F^{(\e)}_{i, \pm}(t) p_{i, \pm}(\e)    & \text{if} \ j = i \pm 1,   1 < i < N,   \\ 
F^{(\e)}_{N, \pm}(t) p_{N, \pm}(\e)     &  \text{if} \ j =  N - \frac{1 \mp  1}{2},  i = N, \\
0  & \text{otherwise},
\end{array}
\right.
\end{equation}
where
\begin{equation}\label{numjabomo}
_0F^{(\e)}_{1, -}(t) =  \sum_{n = 0}^\infty \, F^{(\e)}_{1, -}(t) * F^{(\e) *n}_{0, -}(t) * F^{(\e)}_{0, +}(t) \cdot p_{0, -}(\e)^n p_{0, +}(\e).
\end{equation} 

This relation implies, for every $\e \in (0, \e_0]$,   the following relation for transition probabilities of the reduced embedded Markov chain $_0\eta^{(\e)}_n$,
\begin{equation}\label{emsemitakaska}
_0p^{(\e)}_{ij} = \left\{
\begin{array}{lll}
_0p_{1, \pm}(\e)  = p_{1, \pm }(\e)  &  \text{if} \ j =  1 +  \frac{1 \pm  1}{2},  i = 1, \\  
_0p_{i, +}(\e)  = p_{i, \pm}(\e)    & \text{if} \ j = i \pm 1,   1 < i < N,   \\ 
_0p_{N, +}(\e)  = p_{N, \pm}(\e)     &  \text{if} \ j =  N - \frac{1 \mp  1}{2},  i = N, \\
0  & \text{otherwise}.
\end{array}
\right.
\end{equation}
and the following relation for transition expectations of the reduced embedded semi-Markov process $_0\eta^{(\e)}(t)$,
{\small
\begin{equation}\label{emsemitakas}
_0e^{(\e)}_{ij} = \left\{
\begin{array}{lll}
_0e_{1, +}(\e) = e_{1, +}(\e)  &  \text{if} \ j =  2,   i = 1, \\
_0e_{1, -}(\e) =  e_{1, -}(\e) & \\ 
\quad + \, e_{0}(\e) \cdot \frac{p_{1, -}(\e)}{p_{0, +}(\e)}  &  \text{if} \ j =  1,  i = 1, \\    
_0e_{i, \pm}(\e) = e_{i, \pm}(\e)    & \text{if} \ j = i \pm 1, 1 < i < N,   \\ 
_0e_{N, \pm}(\e) =e_{N, \pm}(\e)     &  \text{if} \ j =  N - \frac{1 \mp  1}{2},  i = N, \\
0  & \text{otherwise}.
\end{array}
\right.
\end{equation}
}

Note that, by Theorem 1, the following relation takes place, for every $\e \in (0, \e_0]$ and $i,  j \in \, _0\XX$, 
\begin{equation}\label{hitrer}
\EE_i \tau_j^{(\e)} = \EE_i \, _0\tau_j^{(\e)}.  
\end{equation}

Second, let us consider the case, where the state $N$ is excluded from the phase space $\XX$. In this case, the reduced phase space $_N\XX = \ \{ 0, \ldots, N -1 \}$.

The transition probabilities of the reduced semi-Markov process $_N\eta^{(\e)}(t)$ has, for every $\e \in (0, \e_0]$,  the following form, for $t \geq 0$, 
\begin{equation}\label{emsemitakba}
_NQ^{(\e)}_{ij}(t) = \left\{
\begin{array}{lll}
F^{(\e)}_{0, \pm}(t) p_{0, \pm}(\e)   &  \text{if} \ j =  0 + \frac{1 \pm 1}{2},  i = 0, \\ 
F^{(\e)}_{i, \pm}(t) p_{i, \pm}(\e)    & \text{if} \ j = i \pm 1,   1 < i < N -1,   \\ 
_NF^{(\e)}_{1, +}(t) p_{N - 1, +}(\e) &  \text{if} \ j =  N - 1,   i = N - 1, \\ 
F^{(\e)}_{N -1, -}(t) p_{N - 1, -}(\e)     &  \text{if} \ j =  N - 2, i = N -1, \\
0  & \text{otherwise},
\end{array}
\right.
\end{equation}
where
\begin{equation}\label{numjabomobato}
_NF^{(\e)}_{N -1, +}(t) = \sum_{n = 0}^\infty  F^{(\e)}_{N-1, +}(t) * F^{(\e) *n}_{N, +}(t) * F^{(\e)}_{N, -}(t) \cdot p_{N, +}(\e)^n p_{N, -}(\e).
\end{equation}

This relation implies, for every $\e \in (0, \e_0]$,   the following relation for transition probabilities of the reduced embedded Markov chain $_N\eta^{(\e)}_n$,
\begin{equation}\label{emsemitakaskaba}
_Np^{(\e)}_{ij} = \left\{
\begin{array}{lll}
_Np_{0, \pm}(\e)  = p_{0, \pm }(\e)  &  \text{if} \ j =  1 +  \frac{1 \pm  1}{2},  i = 0, \\  
_Np_{i, +}(\e)  = p_{i, \pm}(\e)    & \text{if} \ j = i \pm 1,   0 < i < N -1,   \\ 
_Np_{N -1, \pm}(\e)  = p_{N - 1, \pm}(\e)     &  \text{if} \ j =  N -1 - \frac{1 \mp  1}{2},   i = N-1, \\
0  & \text{otherwise}.
\end{array}
\right.
\end{equation}
and the following relation for transition expectations of the reduced embedded semi-Markov process $_0\eta^{(\e)}(t)$,
{\small
\begin{equation}\label{emsemitakasba}
_Ne^{(\e)}_{ij} = \left\{
\begin{array}{lll}
_Ne_{1, \pm}(\e) = e_{1, \pm}(\e)  &  \text{if} \ j =  0 + \frac{1 \pm 1}{2}, i = 0, \\
_Ne_{i, +}(\e) = e_{i, \pm}(\e)    & \text{if} \ j = i \pm 1, 0 < i < N- 1,   \\ 
_Ne_{N-1, +}(\e) =  e_{N-1, +}(\e) & \\ 
\quad  +  e_{N}(\e) \cdot \frac{p_{N - 1, +}(\e)}{p_{N, -}(\e)}  &  \text{if} \ j =  N - 1, i =  N - 1, \\    
_Ne_{N-1, -}(\e) = e_{N-1, -}(\e)     &  \text{if} \ j =  N - 2,  i = N -1, \\
0  & \text{otherwise}.
\end{array}
\right.
\end{equation}
}

It is readily seen that in both cases, where $r = 0$ or $r = N$, the reduced semi-Markov process $_r\eta^{(\e)}(t)$ also has a birth-death type, with the phase space, respectively $_0\XX = \{ 1, \ldots, N \}$ or  $_N\XX = \{ 0, \ldots, N -1 \}$ and transition characteristics given, respectively, by relations (\ref{emsemitak}) -- (\ref{emsemitakas}) if $r = 0$, or by relations (\ref{emsemitakba}) -- (\ref{emsemitakasba})  if $r = N$. \

\pagebreak

{\bf 4.4. Sequential reduction of states for birth-death type  \\ 
\makebox[14mm]{} semi-Markov processes}.\\
 
Let us $0 \leq k \leq m \leq r \leq N$. The states $0, \ldots, k-1$ and  $N, \ldots, r + 1$ can be sequentially excluded from the phase space $\XX$ of the 
semi-Markov process  $\eta^{(\e)}(t)$. 

In order to describe this recurrent procedure, let us denote the resulted reduced semi-Markov process  as $_{\langle k, r \rangle}\eta^{(\e)}(t)$. This process has the reduced 
phase space $_{\langle k, r \rangle}\XX = \{ k, \ldots, r \}$.

In particular, the initial semi-Markov process  $\eta^{(\e)}(t) = \, _{\langle 0, N \rangle}\eta^{(\e)}(t)$.

The reduced semi-Markov process  $_{\langle k, r \rangle}\eta^{(\e)}(t)$ can be obtained by excluding of the state $k-1$ from the phase space $_{\langle k-1, j \rangle}\XX$ of the reduced semi-Markov process $_{\langle k -1, r \rangle}\eta^{(\e)}(t)$ or by excluding state $r +1$ from the phase space $_{\langle k, r +1 \rangle}\XX$ of the reduced semi-Markov 
process $_{\langle k, r +1 \rangle}\eta^{(\e)}(t)$.

The sequential excluding of the states $0, \ldots, k -1$ and  $N, \ldots, r + 1$ can be realized in an arbitrary order of choice one of these sequences and then by excluding the corresponding next  state from the chosen sequence. 

The simplest variants for the sequences of excluded states are $0, \ldots, k -1, N, \ldots, r + 1$  and $N, \ldots, r + 1, 0, \ldots, k -1$. 

The resulted reduced semi-Markov process $_{\langle k, r \rangle}\eta^{(\e)}(t)$ will be the same and it will have a birth-death type.

Here, we also accept  the reduced semi-Markov process $_{\langle m, m \rangle}\eta^{(\e)}(t)$ with one-state phase space $_{\langle m, m \rangle}\XX =  \{ m \}$  as a birth-death semi-Markov process.

This process has transition probability for the embedded Markov chain,
\begin{equation}
_{\langle m, m \rangle}p_{mm}^{(\e)} = \, _{\langle m, m \rangle}p_{m, +}(\e) + \, _{\langle m, m \rangle}p_{m, -}(\e) =  1,
\end{equation} 
and the semi-Markov transition probabilities,
\begin{align}
_{\langle m, m \rangle}Q_{mm}^{(\e)}(t) & = \, _{\langle m, m \rangle}F_{m, +}^{(\e)}(t) \, 
_{\langle m, m \rangle}p_{m, +}(\e) + \, _{\langle m, m \rangle}F_{m, -}^{(\e)}(t) \, _{\langle m, m \rangle}p_{m, -}(\e) \nonumber \\ 
& = \PP_i \{\tau_i^{\e)} \leq t \}.
\end{align}

The following relations, which are, in fact, variants of 
relations (\ref{emsemitakaska}), (\ref{emsemitakaskaba})
 and  (\ref{emsemitakas}), (\ref{emsemitakasba}), express transition probabilities $_{\langle k, r \rangle}p^{(\e)}_{ij}$ and expectations of transition times $_{\langle k, r \rangle}e^{(\e)}_{ij}$ for the reduced embedded semi-Markov process  $_{\langle k, r \rangle}\eta^{(\e)}(t)$, via the transition probabilities $_{\langle k-1, r \rangle}p^{(\e)}_{ij}$ and the expectations of transition times 
$_{\langle k-1, r \rangle}e^{(\e)}_{ij}$ for  the reduced embedded semi-Markov process $_{\langle k-1, r \rangle}\eta^{(\e)}(t)$, 
for $1 \leq k \leq r \leq N$ and, for every $\e \in (0, \e_0]$, 
{\small 
\begin{equation}\label{takaskao}
_{\langle k, r \rangle}p^{(\e)}_{ij} = \left\{
\begin{array}{lll}
_{\langle k, r \rangle}p_{k, \pm}(\e)  & = \, _{\langle k-1, r \rangle}p_{k, \pm }(\e)   \vspace{1mm} \\
& \quad  \text{if} \ j =  k +  \frac{1 \pm  1}{2},  i = k,  \vspace{2mm} \\  
_{\langle k, r \rangle}p_{i, \pm}(\e)  & = \, _{\langle k-1, r \rangle}p_{i, \pm}(\e)   \vspace{1mm}  \\
& \quad  \ \text{if} \ j = i \pm 1,   k < i < r,    \vspace{2mm} \\ 
_{\langle k, r \rangle}p_{r, \pm }(\e)  & = \, _{\langle k-1, r \rangle}p_{r, \pm}(\e)    \vspace{1mm}  \\
& \quad \  \text{if} \ j =  r - \frac{1 \mp  1}{2},  i = r,  \vspace{2mm}  \\
0  & \text{otherwise}, 
\end{array}
\right.
\end{equation}
}
and
{\small
\begin{equation}\label{takaso}
_{\langle k, r \rangle}e^{(\e)}_{ij} = \left\{
\begin{array}{lll}
_{\langle k, r \rangle}e_{k, +}(\e) & = \, _{\langle k-1, r \rangle}e_{k, +}(\e)  \vspace{1mm}  \\  
&  \text{if} \ j =  k+1,   i = k,  \vspace{2mm}  \\
_{\langle k, r \rangle}e_{k, -}(\e) & =  \, _{\langle k-1, r \rangle}e_{k, -}(\e)   \vspace{1mm}  \\ 
& + \, _{\langle k-1, r \rangle}e_{k-1}(\e) \cdot \frac{_{\langle k-1, r \rangle}p_{k, -}(\e)}{_{\langle k-1, r \rangle}p_{k-1, +}(\e)}   \vspace{1mm} \\ 
&  \text{if} \ j =  k,  i = k,  \vspace{2mm}  \\ 
_{\langle k, r \rangle}e_{i, \pm}(\e) & = \, _{\langle k-1, r \rangle}e_{i, \pm}(\e)   \vspace{1mm} \\ 
 & \text{if} \ j = i \pm 1, k < i < r,   \vspace{2mm}  \\ 
_{\langle k, r \rangle}e_{r, \pm}(\e) & = \, _{\langle k-1, r \rangle}e_{r, \pm }(\e)   \vspace{1mm}    \\
 &  \text{if} \ j =  r - \frac{1 \mp  1}{2},  i = r,  \vspace{2mm}  \\
0  & \text{otherwise},
\end{array}
\right.
\end{equation}
}
or,  via transition probabilities $_{\langle k, r +1 \rangle}p^{(\e)}_{ij}$ and expectations of transition times $_{\langle k, r +1 \rangle}e^{(\e)}_{ij}$ for  the reduced embedded semi-Markov process $_{\langle k, r +1 \rangle}\eta^{(\e)}(t)$, for $0 \leq k \leq r \leq N -1$, and, for every $\e \in (0, \e_0]$, 
{\small 
\begin{equation}\label{askao}
_{\langle k, r \rangle}p^{(\e)}_{ij} = \left\{
\begin{array}{lll}
_{\langle k, r \rangle}p_{k, \pm}(\e)  & = \, _{\langle k, r +1 \rangle}p_{k, \pm }(\e)   \vspace{1mm} \\
& \quad  \text{if} \ j =  k +  \frac{1 \pm  1}{2},  i = k,  \vspace{2mm} \\  
_{\langle k, r \rangle}p_{i, \pm}(\e)  & = \, _{\langle k, r +1 \rangle}p_{i, \pm}(\e)   \vspace{1mm}  \\
& \quad  \ \text{if} \ j = i \pm 1,   k < i < r,    \vspace{2mm} \\ 
_{\langle k, r \rangle}p_{r, \pm}(\e)  & = \, _{\langle k, r +1 \rangle}p_{r, \pm}(\e)    \vspace{1mm}  \\
& \quad \  \text{if} \ j =  r - \frac{1 \mp  1}{2},  i = r,  \vspace{2mm}  \\
0  & \text{otherwise}.
\end{array}
\right.
\end{equation}
}
and
{\small
\begin{equation}\label{akaso}
_{\langle k, r \rangle}e^{(\e)}_{ij}   = \left\{
\begin{array}{lll}
_{\langle k, r \rangle}e_{k, \pm}(\e) & = \, _{\langle k, r+1 \rangle}e_{k, \pm}(\e)  \vspace{1mm}  \\  
&  \text{if} \ j =  k+ \frac{1 +1}{2},   i = k, \vspace{2mm}  \\
_{\langle k, r \rangle}e_{i, \pm }(\e) & = \, _{\langle k-1, r \rangle}e_{i, \pm}(\e)   \vspace{1mm} \\ 
 & \text{if} \ j = i \pm 1, k < i < r,    \vspace{2mm} \\ 
 _{\langle k, r \rangle}e_{r, +}(\e) & =  \, _{\langle k, r +1 \rangle}e_{r, +}(\e)    \vspace{1mm} \\ 
& + \, _{\langle k, r +1 \rangle}e_{r+1}(\e) \cdot \frac{_{\langle k, r +1 \rangle}p_{r, +}(\e)}{_{\langle k, r +1 \rangle}p_{r+1, -}(\e)}   \vspace{1mm}  \\ 
&  \text{if} \ j =  r,  i = r,  \vspace{2mm}  \\ 
_{\langle k, r \rangle}e_{r, -}(\e) & = \, _{\langle k, r +1 \rangle}e_{r, -}(\e)   \vspace{1mm}    \\
 &  \text{if} \ j =  r - 1,  i = r,  \vspace{2mm} \\
0  & \text{otherwise}, 
\end{array}
\right.
\end{equation}
}
where 
\begin{equation}\label{noply}
_{\langle k, r \rangle}e_{i}(\e) = \,  _{\langle k, r \rangle}e_{i, +}(\e)  + \, _{\langle k, r \rangle}e_{i, -}(\e) 
\end{equation}\vspace{2mm}

{\bf 4.5. Explicit formulas for expectations of hitting times for  \\
\makebox[14mm]{} birth-death type semi-Markov processes} \\

As was mentioned above the process $_{\langle 0, N \rangle}\eta^{(\e)}(t) =  \eta^{(\e)}(t)$. Also the process $_{\langle 1, N \rangle}\eta^{(\e)}(t) =  \, _0\eta^{(\e)}(t)$ and the 
process $_{\langle 0, N-1 \rangle}\eta^{(\e)}(t) =  \, _N\eta^{(\e)}(t)$. 

Thus, the relations (\ref{takaskao}) and (\ref{askao}) and relations (\ref{takaso}) and (\ref{akaso})  reduce to relations   (\ref{emsemitakaska}) and (\ref{emsemitakaskaba}), when computing, respectively, the transition probabilities $_{\langle 1, N \rangle}p_{k, \pm}(\e)$  and  $_{\langle 0, N -1 \rangle}p_{k, \pm}(\e)$. 

Also, relation (\ref{emsemitakaso}) reduces to relations  (\ref{emsemitakas}) and {\ref{emsemitakasba}), when computing,
respectively, expectation of transition times $_{\langle 1, N \rangle}e_{k, \pm}(\e)$ and   $_{\langle 0, N -1 \rangle}e_{k, \pm}(\e)$.
 
Let us get, for example,   quantities $_{\langle 2, N \rangle}p_{k, \pm}(\e)$ and $_{\langle 2, N \rangle}e_{k, \pm}(\e)$ for the process  $_{\langle 2, N \rangle}\eta^{(\e)}(t)$ expressed in terms of  transition characteristic for the initial semi-Markov process $\eta^{(\e)}(t)$. 

Using recurrent  relations   (\ref{takaskao}) and then (\ref{emsemitakaska}), we get the following relations, for every $\e \in (0, \e_0]$, 
\begin{equation}\label{emsemitalop}
_{\langle 2, N \rangle}p^{(\e)}_{ij} = \left\{
\begin{array}{llll} 
_{\langle 2, N \rangle}p_{2, \pm}(\e)  & = \, _{\langle 1, N \rangle}p_{2, \pm }(\e)  = p_{2, \pm }(\e)   \vspace{1mm} \\
& \quad  \text{if} \ j =  2 +  \frac{1 \pm  1}{2},  i = 2,  \vspace{2mm} \\  
_{\langle 2, N \rangle}p_{i, \pm}(\e)  & = \, _{\langle 1, N \rangle}p_{i, \pm}(\e) = p_{i, \pm}(\e)  \vspace{1mm}  \\
& \quad  \ \text{if} \ j = i \pm 1,   2 < i < N,    \vspace{2mm} \\ 
_{\langle 2, N \rangle}p_{N, \pm }(\e)  & = \, _{\langle 1, N \rangle}p_{N, \pm}(\e) = p_{N, \pm}(\e)    \vspace{1mm}  \\
& \quad \  \text{if} \ j =  N - \frac{1 \mp  1}{2},  i = N,  \vspace{2mm}  \\
0  & \quad s \text{otherwise}. 
\end{array}
\right.
\end{equation}
and
{\small
\begin{equation}\label{emkas}
_{\langle 2, N \rangle}e^{(\e)}_{ij} = \left\{
\begin{array}{llll}
_{\langle 2, N \rangle}e_{2, +}(\e)  & = \, _{\langle 1, N \rangle}e_{1, +}(\e) = e_{1, +}(\e)   \vspace{1mm} \\
& \quad  \text{if}  \ j =  2,   i = 1,  \vspace{2mm} \\
_{\langle 2, N \rangle}e_{1, -}(\e)  & =   \, _{\langle 1, N \rangle}e_{2, -}(\e)  
  + \, _{\langle 1, N \rangle}e_{1}(\e) \cdot \frac{_{\langle 1, N \rangle}p_{2, -}(\e)}{_{\langle 1, N \rangle}p_{1, +}(\e)}  \vspace{1mm} \\
&  =  e_{2, -}(\e) +  e_{1}(\e) \cdot \frac{p_{2, -}(\e)}{p_{1, +}(\e)} +  e_{0}(\e) \cdot \frac{p_{1, -}(\e)p_{2, -}(\e)}{p_{0, +}(\e)p_{1, +}(\e)} \vspace{1mm}  \\ 
&   \quad \text{if} \ j =  1,  i = 1, \vspace{2mm}  \\    
_{\langle 2, N \rangle}e_{i, \pm}(\e) & = \, _{\langle 1, N \rangle}e_{i, \pm}(\e) = e_{i, \pm}(\e)   \vspace{1mm} \\
&  \text{if} \ j = i \pm 1, 1 < i < N,  \vspace{2mm}  \\ 
_{\langle 2, N \rangle}e_{N, \pm}(\e)   & = \, _{\langle 1, N \rangle}e_{N, \pm}(\e)  =  e_{N, \pm}(\e)  \vspace{1mm} \\
&  \quad \text{if} \ j =  N - \frac{1 \mp  1}{2},  i = N, \vspace{2mm}  \\
0  &  \quad  \text{otherwise},
\end{array}
\right.
\end{equation}
}
since
\begin{align}\label{goptrew}
_{\langle 2, N \rangle}e_{2, +}(\e) & =  \, _{\langle 1, N \rangle}e_{2, -}(\e)  
 + \, _{\langle 1, N \rangle}e_{1}(\e) \cdot \frac{_{\langle 1, N \rangle}p_{2, -}(\e)}{_{\langle 1, N \rangle}p_{1, +}(\e)}  \nonumber \\
 & = e_{2, -}(\e) + \Big( \big( e_{1, -}(\e) + e_{0}(\e) \cdot \frac{p_{1, -}(\e)}{p_{0, +}(\e)} \big) + e_{1, +}(\e) \Big)  \cdot \frac{p_{2, -}(\e)}{p_{1, +}(\e)}  \nonumber \\
 & =  e_{2, -}(\e) +  e_{1}(\e) \cdot \frac{p_{2, -}(\e)}{p_{1, +}(\e)} +  e_{0}(\e) \cdot \frac{p_{1, -}(\e)p_{2, -}(\e)}{p_{0, +}(\e)p_{1, +}(\e)}.  
\end{align}

By iterating recurrent formulas (\ref{takaskao})  -- (\ref{takaso}) and  (\ref{askao})  -- (\ref{akaso}) in the way shown in relations (\ref{emsemitalop}) -- (\ref{emkas}), we  get the following explicit formulas for probabilities transition probabilities $_{\langle k, r \rangle}p^{(\e)}_{ij}$ and expectations of transition times $_{\langle k, r \rangle}e^{(\e)}_{ij}$ for the reduced embedded semi-Markov process  $_{\langle k, r \rangle}\eta^{(\e)}(t)$ expressed in terms of the transition characteristic for the initial semi-Markov process $\eta^{(\e)}(t)$, for $0 \leq k \leq r \leq N$ and, for every $\e \in (0, \e_0]$, 
\begin{equation}\label{emkaskao}
_{\langle k, r \rangle}p^{(\e)}_{ij} = \left\{
\begin{array}{lll}
_{\langle k, r \rangle}p_{k, \pm}(\e)  & = p_{k, \pm }(\e)   \vspace{1mm} \\
& \quad  \text{if} \ j =  k +  \frac{1 \pm  1}{2},  i = k,  \vspace{2mm} \\  
_{\langle k, r \rangle}p_{i, \pm}(\e)  & = p_{i, \pm}(\e)   \vspace{1mm}  \\
& \quad  \ \text{if} \ j = i \pm 1,   k < i < r,    \vspace{2mm} \\ 
_{\langle k, r \rangle}p_{r, +}(\e)  & = p_{r, \pm}(\e)    \vspace{1mm}  \\
& \quad \  \text{if} \ j =  r - \frac{1 \mp  1}{2},  i = r,  \vspace{2mm}  \\
0  & \quad s\text{otherwise}.
\end{array}
\right.
\end{equation}
and 
{\small
\begin{equation}\label{emsemitakaso}
_{\langle k, r \rangle}e^{(\e)}_{ij} = \left\{
\begin{array}{lll}
_{\langle k, r \rangle}e_{k, +}(\e) &  = e_{k, +}(\e) \vspace{1mm}  \\  
&  \text{if} \ j =  k+1,   i = k, \vspace{2mm}  \\
_{\langle k, r \rangle}e_{k, -}(\e) & =   e_{k, -}(\e) +   e_{k -1}(\e)\cdot  \frac{p_{k, -}(\e)}{p_{k-1, +}(\e)}  \vspace{1mm}  \\
& +  \cdots + e_{0}(\e)\cdot \frac{p_{1, -}(\e) \cdots p_{k, -}(\e)}{p_{0, +}(\e) \cdots p_{k -1, +}(\e)}  \vspace{1mm}   \\ 
&  \text{if} \ j =  k,  i = k,  \vspace{2mm}\\ 
_{\langle k, r \rangle}e_{i, +}(\e) & = e_{i, \pm}(\e) \vspace{1mm}  \\ 
 & \text{if} \ j = i \pm 1, k < i < r, \vspace{2mm}  \\
 _{\langle k, r \rangle}e_{r, +}(\e) & =   e_{r, +}(\e) +   e_{r +1}(\e)\cdot  \frac{p_{r, +}(\e)}{p_{r+1, -}(\e)}  \vspace{1mm}  \\
& +  \cdots + e_{N}(\e)\cdot \frac{p_{N -1, +}(\e) \cdots p_{r, +}(\e)}{p_{N, -}(\e) \cdots p_{r+1, -}(\e)}  \vspace{1mm}   \\ 
&  \text{if} \ j =  r,  i = r,  \vspace{2mm}\\  
_{\langle k, r \rangle}e_{r, -}(\e) & = e_{r, -}(\e)    \\
 &  \text{if} \ j =  r - 1,  i = r, \vspace{2mm} \\
0  &  \text{otherwise}.
\end{array}
\right.
\end{equation}
}

Let us denote by $_{\langle k, r \rangle}\tau_j^{(\e)}$ the hitting time for the state $j \in \, _{\langle k, r \rangle}\XX$ for the reduced 
semi-Markov process $_{\langle k, r \rangle}\eta^{(\e)}(t)$.

By Theorem 1, the following relation takes place, for $i, j \in \, _{\langle k, r \rangle}\XX$ and, for every $\e \in (0, \e_0]$, 
\begin{equation}\label{hitrerva}
\EE_i \tau_j^{(\e)} = \EE_i \, _{\langle k, r \rangle}\tau_j^{(\e)}.  
\end{equation}

Let us now choose $k = r = m = i \in \XX$. In this case the reduced phase space $_{\langle i, i \rangle}\XX = \{ i \}$ is o one-state set. In this case, the process $_{\langle i, i \rangle}\eta^{(\e)}(t)$ returns to  the state $m$ after every jump.  This implies that, in this case,  for every $\e \in (0, \e_0]$, 
\begin{equation}\label{hitrerva}
E_{ii}(\e) = \EE_i \tau_i^{(\e)} = \EE_i \, _{\langle i, i \rangle}\tau_i^{(\e)} = \, _{\langle i, i \rangle}e_{i}(\e).  
\end{equation}
 
The following formulas takes place, for every $i \in \XX$ an, for every $\e \in (0, \e_0]$, 
\begin{align}\label{opada}
E_{ii}(\e)   & = e_i(\e) \nonumber \\
& +  e_{i-1}(\e) \frac{p_{i, -}(\e)}{p_{i-1, +}(\e)} +   e_{i-2}(\e) \frac{p_{i-1, -}(\e) p_{i, -}(\e)}{p_{i-2, +}(\e) p_{i-1, +}(\e)} \nonumber \\ 
&  + \cdots + e_{0}(\e) \frac{p_{1, -}(\e) p_{2, -}(\e) \cdots p_{i, -}(\e)}{p_{0, +}(\e)p_{1, +}(\e) \cdots p_{i-1, +}(\e)} \nonumber \\ 
& +  e_{i+1}(\e) \frac{p_{i, +}(\e)}{p_{i+1, -}(\e)} +   e_{i+2}(\e) \frac{p_{i+1, +}(\e) p_{i, +}(\e)}{p_{i+2, -}(\e) p_{i+1, -}(\e)} \nonumber \\ 
&  + \cdots + e_{N}(\e) \frac{p_{N -1, +}(\e) p_{N -2, +}(\e) \cdots p_{i, +}(\e)}{p_{N, -}(\e)p_{N -1, -}(\e) \cdots p_{i+1, -}(\e)} 
\end{align}

In particular, 
\begin{align}\label{opadada}
E_{00}(\e)   & = e_0(\e)  +  e_{1}(\e) \frac{p_{0, +}(\e)}{p_{1, -}(\e)} +   e_{2}(\e) \frac{p_{1, +}(\e) p_{0, +}(\e)}{p_{2, -}(\e) p_{1, -}(\e)} \nonumber \\ 
& \quad  + \cdots + e_{N}(\e) \frac{p_{N -1, +}(\e) p_{N -2, +}(\e) \cdots p_{0, +}(\e)}{p_{N, -}(\e)p_{N -1, -}(\e) \cdots p_{1, -}(\e)}.
\end{align}

Formulas (\ref{stationary}), (\ref{opada}) and (\ref{quasiF2}),  (\ref{quasiF3}) yield, in an obvious way, explicit formulas  for 
stationary and conditional quasi-stationary distributions for birth-death-type semi-Markov processes.  

It should be noted that such formulas for stationary distributions of birth-death-type Markov chains are well known and can be found, for example, in Feller  (1968). In context of our studies, a special value has the presented above recurrent algorithm for getting such formulas,  based on sequential reduction of the phase space for birth-death-type semi-Markov processes. \\
 
{\bf 5. First and second order asymptotic expansions for stationary \\ 
\makebox[10.5mm]{} and conditional quasi-stationary distributions} \\

In this section, we give explicit formulas for the coefficients in the first and the second order asymptotic expansions for stationary and conditional quasi-stationary distributions.

The results of the present section are based on the explicit formula (\ref{opada}) for expected return times and the expressions which connect these quantities with stationary and conditional quasi-stationary distributions. We obtain the first and second order asymptotic expansions from these formulas by using operational rules for Laurent asymptotic expansions presented in Silvestrov, D. and Silvestrov, S. (2015, 2016). Some of these operational rules which are relevant for this paper can be found in Subsection 6.1.

Let us here mention that recurrent algorithms for computation of the corresponding higher order asymptotic expansions are given in Section 6. The methods used in Section 6, which in particular give the first and the second order asymptotic expansions, are more convenient and efficient for computer programs. However, the method used in this section is interesting in its own right since it gives a more explicit description which helps us to better understand the asymptotic properties of our models. Moreover, having two different ways of computing the first and the second order coefficients may help us to detect possible errors or numerical instability in the corresponding computer programs.

In this section, it will be convenient to use the following notation,
\begin{equation} \label{defGammaij}
\Gamma_{i,j,\pm}(\e) = p_{i,\pm}(\e) p_{i+1,\pm}(\e) \cdots p_{j,\pm}(\e), \ 0 \leq i \leq j \leq N.
\end{equation}

Using (\ref{defGammaij}), we can write formula (\ref{opada}) as
\begin{align} \label{formulaEii}
E_{ii}(\e) & = e_i(\e) + \sum_{k=0}^{i-1} e_k(\e) \frac{\Gamma_{k+1,i,-}(\e)}{\Gamma_{k,i-1,+}(\e)} \nonumber \\ 
& \quad + \sum_{k=i+1}^N e_k(\e) \frac{\Gamma_{i,k-1,+}(\e)}{\Gamma_{i+1,k,-}(\e)}, \ i \in \XX.
\end{align}

In particular, we have
\begin{equation} \label{formulaE00}
E_{00}(\e) = e_0(\e) + \sum_{k \in \, _0\XX} e_k(\e) \frac{\Gamma_{0,k-1,+}(\e)}{\Gamma_{1,k,-}(\e)}, 
\end{equation}
and
\begin{equation} \label{formulaENN}
E_{NN}(\e) = e_N(\e) + \sum_{k \in \, _N\XX} e_k(\e) \frac{\Gamma_{k+1,N,-}(\e)}{\Gamma_{k,N-1,+}(\e)}.
\end{equation}

We will compute the desired asymptotic expansions by applying operational rules for Laurent asymptotic expansions in relations (\ref{formulaEii})--(\ref{formulaENN}). In order for the presentation to not be too repetitive, we will directly compute the second order asymptotic expansions which contain the first order asymptotic expansions as special cases. In particular, this gives us limits for stationary and conditional quasi-stationary distributions.

The formulas for computing the asymptotic expansions are different depending on whether condition ${\bf H_1}$, ${\bf H_2}$, or ${\bf H_3}$ holds. We consider these three cases in Subsections 5.1, 5.2, and 5.3, respectively. 

Each of these sections will have the same structure: First, we present a lemma which successively constructs asymptotic expansions for the quantities given in relations (\ref{formulaEii})--(\ref{formulaENN}). Then, using these expansions, we construct the first and the second order asymptotic expansions for stationary (Subsections 5.1--5.3) and conditional quasi-stationary distributions (Subsections 5.2--5.3). \\

{\bf 5.1. First and second order asymptotic expansions for \\
\makebox[14mm]{} stationary distributions under condition ${\bf H_1}$} \\

In the case where condition ${\bf H_1}$ holds, the semi-Markov process has no asymptotically absorbing states. In this case, all quantities in relations (\ref{formulaEii})--(\ref{formulaENN}) are of order $O(1)$ and the construction of asymptotic expansions are rather straightforward.

In the following lemma we successively construct asymptotic expansions for the quantities given in relations (\ref{formulaEii})--(\ref{formulaENN}).
\begin{lemma} \label{lemmaH1}
Assume that conditions  ${\bf A}$--${\bf C}$, ${\bf D}_1$--${\bf F}_1$, ${\bf G}$  and ${\bf H_1}$ hold. Then:
\begin{enumerate}
\item[$\mathbf{(i)}$] For $i \in \XX$, we have
\begin{equation*}
e_i(\e) = b_i[0] + b_i[1] \e + \dot{o}_i(\e), \ \e \in (0,\e_0],
\end{equation*}
where $\dot{o}_i(\e) / \e \to 0$ as $\e \to 0$ and
\begin{equation*}
b_i[0] = b_{i,-}[0] + b_{i,+}[0] > 0, \quad b_i[1] = b_{i,-}[1] + b_{i,+}[1].
\end{equation*}
\item[$\mathbf{(ii)}$] For $0 \leq i \leq j \leq N$, we have
\begin{equation*}
\Gamma_{i,j,\pm}(\e) = A_{i,j,\pm}[0] + A_{i,j,\pm}[1] \e + o_{i,j,\pm}(\e), \ \e \in (0,\e_0],
\end{equation*}
where $o_{i,j,\pm}(\e) / \e \to 0$ as $\e \to 0$ and
\begin{equation*}
A_{i,j,\pm}[0] = a_{i,\pm}[0] a_{i+1,\pm}[0] \cdots a_{j,\pm}[0] > 0,
\end{equation*}
\begin{equation*}
A_{i,j,\pm}[1] = \sum_{n_i + n_{i+1} + \cdots + n_j = 1} a_{i,\pm}[n_i] a_{i+1,\pm}[n_{i+1}] \cdots a_{j,\pm}[n_j].
\end{equation*}
\item[$\mathbf{(iii)}$] For $0 \leq k \leq i-1, \ i \in \, _0\XX$, we have
\begin{equation*}
\frac{\Gamma_{k+1,i,-}(\e)}{\Gamma_{k,i-1,+}(\e)} = A_{k,i}^*[0] + A_{k,i}^*[1] \e + o_{k,i}^*(\e), \ \e \in (0,\e_0],
\end{equation*}
where $o_{k,i}^*(\e) / \e \to 0$ as $\e \to 0$ and
\begin{equation*}
A_{k,i}^*[0] = \frac{A_{k+1,i,-}[0]}{A_{k,i-1,+}[0]} > 0,
\end{equation*}
\begin{equation*}
A_{k,i}^*[1] = \frac{A_{k+1,i,-}[1] A_{k,i-1,+}[0] - A_{k+1,i,-}[0] A_{k,i-1,+}[1]}{A_{k,i-1,+}[0]^2}.
\end{equation*}
\item[$\mathbf{(iv)}$] For $i+1 \leq k \leq N, \ i \in \, _N\XX$, we have
\begin{equation*}
\frac{\Gamma_{i,k-1,+}(\e)}{\Gamma_{i+1,k,-}(\e)} = A_{k,i}^*[0] + A_{k,i}^*[1] \e + o_{k,i}^*(\e), \ \e \in (0,\e_0],
\end{equation*}
where $o_{k,i}^*(\e) / \e \to 0$ as $\e \to 0$ and
\begin{equation*}
A_{k,i}^*[0] = \frac{A_{i,k-1,+}[0]}{A_{i+1,k,-}[0]} > 0,
\end{equation*}
\begin{equation*}
A_{k,i}^*[1] = \frac{A_{i,k-1,+}[1] A_{i+1,k,-}[0] - A_{i,k-1,+}[0] A_{i+1,k,-}[1]}{A_{i+1,k,-}[0]^2}.
\end{equation*}
\item[$\mathbf{(v)}$] For $i \in \XX$, we have
\begin{equation*}
E_{ii}(\e) = B_{ii}[0] + B_{ii}[1] \e + \dot{o}_{ii}(\e), \ \e \in (0,\e_0],
\end{equation*}
where $\dot{o}_{ii}(\e) / \e \to 0$ as $\e \to 0$ and
\begin{equation*}
B_{ii}[0] = b_i[0] + \sum_{k \in \, _i\XX} b_k[0] A_{k,i}^*[0] > 0,
\end{equation*}
\begin{equation*}
B_{ii}[1] = b_i[1] + \sum_{k \in \, _i\XX} ( b_k[0] A_{k,i}^*[1] + b_k[1] A_{k,i}^*[0] ).
\end{equation*}
\end{enumerate}
\end{lemma}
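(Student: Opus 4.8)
The plan is to prove parts (i)--(v) of Lemma \ref{lemmaH1} in the stated order, since each part feeds into the next, and throughout the key tool is the set of arithmetic operational rules for (truncated) Taylor/Laurent asymptotic expansions: if $f(\e) = f[0] + f[1]\e + \dot o(\e)$ and $g(\e) = g[0] + g[1]\e + \dot o(\e)$, then $f+g$, $fg$, and (when $g[0] \neq 0$) $f/g$ again admit first-order expansions with coefficients given by the familiar formulas, and these are exactly the coefficients appearing in the statement. For part (i), I would start from condition ${\bf E}_1$ (with $L = 1$), which under ${\bf H_1}$ gives $l_{i,\pm} = 0$ for every $i \in \XX$ (for $0 < i < N$ this is case (b) of ${\bf E}_L$; for $i = 0, N$ case (c) together with ${\bf H_1}$ and consistency condition ${\bf G}$ forces $l_{i,\pm} = 0$ and $b_{i,\pm}[0] > 0$). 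Then $e_i(\e) = e_{i,-}(\e) + e_{i,+}(\e)$ by (\ref{embedadamad}), specialized to the birth-death structure, so adding the two expansions gives $b_i[0] = b_{i,-}[0] + b_{i,+}[0] > 0$ and $b_i[1] = b_{i,-}[1] + b_{i,+}[1]$; positivity of $b_i[0]$ is immediate since it is a sum of positive terms.

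For part (ii), I would argue by induction on $j - i$, writing $\Gamma_{i,j,\pm}(\e) = \Gamma_{i,j-1,\pm}(\e)\, p_{j,\pm}(\e)$ and applying the product rule for first-order expansions, using condition ${\bf D}_1$ (again $l_{i,\pm} = 0$ under ${\bf H_1}$ and ${\bf G}$, so $a_{i,\pm}[0] > 0$). The zeroth coefficient is the product of the $a_{i,\pm}[0]$, which is positive as a product of positive numbers; the first coefficient collects all terms in which exactly one factor contributes its $\e^1$-coefficient, which is precisely the stated sum over $n_i + \cdots + n_j = 1$. Parts (iii) and (iv) are then immediate applications of the quotient rule: by (ii) both numerator and denominator have strictly positive zeroth coefficients (products of positive $a$'s), so the quotient has a first-order expansion with zeroth coefficient $A_{k+1,i,-}[0]/A_{k,i-1,+}[0] > 0$ and first coefficient given by the standard quotient formula $(\text{num}[1]\cdot\text{den}[0] - \text{num}[0]\cdot\text{den}[1])/\text{den}[0]^2$, which is exactly what is written. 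Part (iv) is the mirror image, with the roles of the ascending ($+$) and descending ($-$) products swapped, and requires no new idea.

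Finally, part (v) follows by substituting the expansions from (i), (iii), (iv) into the explicit formula (\ref{formulaEii}) for $E_{ii}(\e)$: the sum $\sum_{k=0}^{i-1} e_k(\e)\frac{\Gamma_{k+1,i,-}(\e)}{\Gamma_{k,i-1,+}(\e)} + \sum_{k=i+1}^N e_k(\e)\frac{\Gamma_{i,k-1,+}(\e)}{\Gamma_{i+1,k,-}(\e)}$ is exactly $\sum_{k \in \, _i\XX} e_k(\e)\cdot(\text{ratio})_{k,i}(\e)$ in the notation $A_{k,i}^*$, so applying the product rule termwise and then the sum rule gives $B_{ii}[0] = b_i[0] + \sum_{k \in \, _i\XX} b_k[0] A_{k,i}^*[0]$ and $B_{ii}[1] = b_i[1] + \sum_{k \in \, _i\XX}(b_k[0] A_{k,i}^*[1] + b_k[1] A_{k,i}^*[0])$; positivity of $B_{ii}[0]$ holds because it is a sum of products of positive quantities (indeed it already dominates $b_i[0] > 0$). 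The main thing to be careful about — the only place where anything could go wrong — is the bookkeeping of the remainder terms: one must check that a finite sum and finitely many products and quotients of functions with $\dot o(\e)$ remainders again has an $\dot o(\e)$ remainder, and that the quotient steps are legitimate, i.e. the denominators $\Gamma_{k,i-1,+}(\e)$ and $\Gamma_{i+1,k,-}(\e)$ are bounded away from $0$ for small $\e$; this is guaranteed by ${\bf D}_1$(b)--(c) together with ${\bf H_1}$, which ensure all relevant zeroth coefficients are strictly positive. No singular (negative-power) terms arise anywhere under ${\bf H_1}$, which is what makes this case the easy one; the genuine difficulties are deferred to Subsections 5.2 and 5.3.
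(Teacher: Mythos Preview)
Your proposal is correct and follows essentially the same approach as the paper's proof: both derive (i) from $e_i(\e)=e_{i,-}(\e)+e_{i,+}(\e)$ and condition ${\bf E}_1$, obtain (ii) by multiplying the expansions from ${\bf D}_1$ (the paper invokes the multiple product rule at once rather than inducting on $j-i$, but this is cosmetic), get (iii)--(iv) from the division rule applied to the expansions of part (ii), and assemble (v) by substituting into formula (\ref{formulaEii}) and applying the product and summation rules. Your explicit justification that $l_{i,\pm}=0$ for all $i$ under ${\bf H_1}$ and ${\bf G}$, and your remark on why the quotient steps are legitimate, make the argument slightly more careful than the paper's version, but the underlying logic is identical.
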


\begin{proof}
Since $e_i(\e) = e_{i,-}(\e) + e_{i,+}(\e)$, $i \in \XX$, part $\mathbf{(i)}$ follows immediately from condition ${\bf E}_1$.

For the proof of part $\mathbf{(ii)}$ we notice that it follows from the definition (\ref{defGammaij}) of $\Gamma_{i,j,\pm}(\e)$ and condition ${\bf D}_1$ that
\begin{equation*}
\Gamma_{i,j,\pm}(\e) = \prod_{k=i}^j ( a_{k,\pm}[0] + a_{k,\pm}[1] \e + o_{k,\pm}(\e) ), \ 0 \leq i \leq j \leq N.
\end{equation*}

By applying the multiple product rule for asymptotic expansions, we obtain the asymptotic relation given in part $\mathbf{(ii)}$ where the coefficients $A_{i,j,\pm}[0]$, $0 \leq i \leq j \leq n$, are positive since condition ${\bf H_1}$ holds.

In order to prove parts $\mathbf{(iii)}$ and $\mathbf{(iv)}$ we use the result in part $\mathbf{(ii)}$. For $0 \leq k \leq i-1$, $i \in \, _0\XX$, this gives us
\begin{equation} \label{lmmH1quotient1}
\frac{\Gamma_{k+1,i,-}(\e)}{\Gamma_{k,i-1,+}(\e)} = \frac{A_{k+1,i,-}[0] + A_{k+1,i,-}[1] \e + o_{k+1,i,-}(\e)}{A_{k,i-1,+}[0] + A_{k,i-1,+}[1] \e + o_{k,i-1,+}(\e)},
\end{equation}
and, for $i+1 \leq k \leq N$, $i \in \, _N\XX$, we get
\begin{equation} \label{lmmH1quotient2}
\frac{\Gamma_{i,k-1,+}(\e)}{\Gamma_{i+1,k,-}(\e)} = \frac{A_{i,k-1,+}[0] + A_{i,k-1,+}[1] \e + o_{i,k-1,+}(\e)}{A_{i+1,k,-}[0] + A_{i+1,k,-}[1] \e + o_{i+1,k,-}(\e)}.
\end{equation}

Using the division rule for asymptotic expansions in relations (\ref{lmmH1quotient1}) and (\ref{lmmH1quotient2}) we get the asymptotic expansions given in parts $\mathbf{(iii)}$ and $\mathbf{(iv)}$.

Finally, we can use relation (\ref{formulaEii}) to prove part $\mathbf{(v)}$. This relation together with the results in parts $\mathbf{(i)}$, $\mathbf{(iii)}$, and $\mathbf{(iv)}$ yield
\begin{align*}
E_{ii}(\e) &= b_i[0] + b_i[1] \e + \dot{o}_i(\e) \\
&+ \sum_{k \in \, _i\XX} (b_k[0] + b_k[1] \e + \dot{o}_k(\e)) \\ 
& \times (A_{k,i}^*[0] + A_{k,i}^*[1] \e + o_{k,i}^*(\e)), \ i \in \XX.
\end{align*}
A combination of the product rule and the multiple summation rule for asymptotic expansions gives the asymptotic relation in part $\mathbf{(v)}$.
\end{proof}

The following theorem gives second order asymptotic expansions for stationary probabilities. In particular, this theorem shows that there exist limits for stationary probabilities, $\pi_i(0) = \lim_{\e \to 0} \pi_i(\e)$, $i \in \XX$, where $\pi_i(0) > 0$, $i \in \XX$. 

The corresponding higher order asymptotic expansions are given in Theorem 5.

\begin{theorem}
Assume that conditions  ${\bf A}$--${\bf C}$, ${\bf D}_1$--${\bf F}_1$, ${\bf G}$  and ${\bf H_1}$ hold. Then, we have the following asymptotic relation for the stationary probabilities $\pi_i(\e)$, $i \in \XX$,
\begin{equation*}
\pi_i(\e) = c_i[0] + c_i[1] \e + o_i(\e), \ \e \in (0,\e_0],
\end{equation*}
where $o_i(\e) / \e \to 0$ as $\e \to 0$ and
\begin{equation*}
c_i[0] = \frac{b_i[0]}{B_{ii}[0]} > 0, \quad c_i[1] = \frac{b_i[1] B_{ii}[0] - b_i[0] B_{ii}[1]}{B_{ii}[0]^2},
\end{equation*}
where the $B_{ii}[0]$, $B_{ii}[1]$, $i \in \XX$, can be computed from the formulas given in Lemma \ref{lemmaH1}.
\end{theorem}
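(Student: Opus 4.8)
The plan is to derive everything from the exact identity (\ref{stationary}), namely $\pi_i(\e) = e_i(\e)/E_{ii}(\e)$, which holds for every $\e \in (0,\e_0]$ and every $i \in \XX$, combined with the first order asymptotic expansions already assembled in Lemma \ref{lemmaH1}. Part $\mathbf{(i)}$ of that lemma supplies the expansion of the numerator, $e_i(\e) = b_i[0] + b_i[1]\e + \dot{o}_i(\e)$ with $b_i[0] > 0$, and part $\mathbf{(v)}$ supplies the expansion of the denominator, $E_{ii}(\e) = B_{ii}[0] + B_{ii}[1]\e + \dot{o}_{ii}(\e)$ with $B_{ii}[0] > 0$. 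Both expansions have length one (two coefficients), and, crucially, both leading coefficients are strictly positive; this is exactly what makes the quotient again a Taylor asymptotic expansion of the same type.

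The next step is to apply the division rule for (Taylor) asymptotic expansions — one of the operational rules of Silvestrov and Silvestrov recalled in Subsection 6.1 — to the ratio $e_i(\e)/E_{ii}(\e)$. Since the leading coefficient $B_{ii}[0]$ of the denominator does not vanish and the two expansions have matching length, this rule yields
\[
\pi_i(\e) = \frac{e_i(\e)}{E_{ii}(\e)} = c_i[0] + c_i[1]\e + o_i(\e), \qquad \e \in (0,\e_0],
\]
with $o_i(\e)/\e \to 0$ as $\e \to 0$, and with coefficients
\[
c_i[0] = \frac{b_i[0]}{B_{ii}[0]}, \qquad
c_i[1] = \frac{b_i[1]B_{ii}[0] - b_i[0]B_{ii}[1]}{B_{ii}[0]^2},
\]
which are precisely those in the statement. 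Positivity $c_i[0] > 0$ is immediate from $b_i[0] > 0$ and $B_{ii}[0] > 0$, so in particular the limits $\pi_i(0) = c_i[0] = \lim_{\e \to 0}\pi_i(\e)$ exist and are strictly positive for all $i \in \XX$. Moreover $B_{ii}[0]$ and $B_{ii}[1]$ are expressed through the coefficients of the initial perturbation conditions $\mathbf{D}_1$ and $\mathbf{E}_1$ by the formulas collected in parts $\mathbf{(i)}$--$\mathbf{(v)}$ of Lemma \ref{lemmaH1}, so the expansion is fully explicit.

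There is no genuine obstacle in this argument: the work has already been done in Lemma \ref{lemmaH1}, and the theorem is essentially a one-line application of the division rule. The only point requiring attention is the bookkeeping of remainder terms, i.e.\ verifying that the hypotheses of the division rule — nonvanishing leading coefficient of the denominator and equal expansion lengths for numerator and denominator — are in force; both are guaranteed by Lemma \ref{lemmaH1} under condition $\mathbf{H_1}$, where every quantity entering (\ref{formulaEii}) is of order $O(1)$. As an optional consistency check one may verify, using $\sum_{i \in \XX}\pi_i(\e) = 1$, that $\sum_{i \in \XX} c_i[0] = 1$ and $\sum_{i \in \XX} c_i[1] = 0$, but this is not needed for the proof.
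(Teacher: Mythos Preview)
Your proposal is correct and follows essentially the same approach as the paper: both write $\pi_i(\e) = e_i(\e)/E_{ii}(\e)$, insert the expansions from parts $\mathbf{(i)}$ and $\mathbf{(v)}$ of Lemma~\ref{lemmaH1}, and apply the division rule for asymptotic expansions to read off $c_i[0]$ and $c_i[1]$.
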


\begin{proof}
It follows from condition ${\bf E}$ and part $\mathbf{(v)}$ of Lemma \ref{lemmaH1} that, for $i \in \XX$,
\begin{equation*}
\pi_i(\e) = \frac{e_i(\e)}{E_{ii}(\e)} = \frac{b_i[0] + b_i[1] \e + \dot{o}_i(\e)}{B_{ii}[0] + B_{ii}[1] \e + \dot{o}_{ii}(\e)}.
\end{equation*}
The result now follows from the division rule for asymptotic expansions.
\end{proof}  
\vspace{3mm}

{\bf 5.2. First and second order asymptotic expansions for \\ 
\makebox[14mm]{} stationary and conditional quasi-stationary distributions  \\ 
\makebox[14mm]{} under condition ${\bf H_2}$} \\

In the case where condition ${\bf H_2}$ holds, the semi-Markov process has one asymptotically absorbing state, namely state $0$. This means that $p_{0,+}(\e) \sim O(\e)$ and since this quantity is involved in relations (\ref{formulaEii})--(\ref{formulaENN}), the pivotal properties of the expansions are less obvious. Furthermore, since some terms now tends to infinity, we partly need to operate with Laurent asymptotic expansions.

In order to separate cases where $i=0$ or $i \in \, _0\XX$ we will use the indicator function $\gamma_i = I(i=0)$, that is, $\gamma_0 = 1$ and $\gamma_i = 0$ for $i \in \, _0\XX$.

The following lemma gives asymptotic expansions for quantities in relations (\ref{formulaEii})--(\ref{formulaENN}).

\begin{lemma} \label{lemmaH2}
Assume that conditions ${\bf A}$--${\bf C}$, ${\bf D}_1$--${\bf F}_1$, ${\bf G}$ and ${\bf H_2}$ hold. Then:
\begin{enumerate}
\item[$\mathbf{(i)}$] For $i \in \XX$, we have
\begin{equation*}
e_i(\e) = b_i[0] + b_i[1] \e + \dot{o}_i(\e), \ \e \in (0,\e_0],
\end{equation*}
where $\dot{o}_i(\e) / \e \to 0$ as $\e \to 0$ and
\begin{equation*}
b_i[0] = b_{i,-}[0] + b_{i,+}[0] > 0, \quad b_i[1] = b_{i,-}[1] + b_{i,+}[1].
\end{equation*}
\item[$\mathbf{(ii)}$] For $0 \leq i \leq j \leq N$, we have,
\begin{equation*}
\Gamma_{i,j,+}(\e) = A_{i,j,+}[\gamma_i] \e^{\gamma_i} + A_{i,j,+}[\gamma_i+1] \e^{\gamma_i+1} + o_{i,j,+}(\e^{\gamma_i+1}), \ \e \in (0,\e_0],
\end{equation*}
where $o_{i,j,+}(\e^{\gamma_i+1}) / \e^{\gamma_i+1} \to 0$ as $\e \to 0$ and
\begin{equation*}
A_{i,j,+}[\gamma_i] = a_{i,+}[\gamma_i] a_{i+1,+}[0] \cdots a_{j,+}[0] > 0,
\end{equation*}
\begin{equation*}
A_{i,j,+}[\gamma_i+1] = \sum_{n_i + n_{i+1} + \cdots + n_j = 1} a_{i,+}[\gamma_i + n_i] a_{i+1,+}[n_{i+1}] \cdots a_{j,+}[n_j].
\end{equation*}
\item[$\mathbf{(iii)}$] For $0 \leq i \leq j \leq N$, we have
\begin{equation*}
\Gamma_{i,j,-}(\e) = A_{i,j,-}[0] + A_{i,j,-}[1] \e + o_{i,j,-}(\e), \ \e \in (0,\e_0],
\end{equation*}
where $o_{i,j,-}(\e) / \e \to 0$ as $\e \to 0$ and
\begin{equation*}
A_{i,j,-}[0] = a_{i,-}[0] a_{i+1,-}[0] \cdots a_{j,-}[0] > 0,
\end{equation*}
\begin{equation*}
A_{i,j,-}[1] = \sum_{n_i + n_{i+1} + \cdots + n_j = 1} a_{i,-}[n_i] a_{i+1,-}[n_{i+1}] \cdots a_{j,-}[n_j].
\end{equation*}
\item[$\mathbf{(iv)}$] For $0 \leq k \leq i-1, \ i \in \, _0\XX$, we have
\begin{align*}
\frac{\Gamma_{k+1,i,-}(\e)}{\Gamma_{k,i-1,+}(\e)} &= A_{k,i}^*[-\gamma_k] \e^{-\gamma_k} + A_{k,i}^*[-\gamma_k+1] \e^{-\gamma_k+1} \\
&+ o_{k,i}^*(\e^{-\gamma_k+1}), \ \e \in (0,\e_0],
\end{align*}
where $o_{k,i}^*(\e^{-\gamma_k+1}) / \e^{-\gamma_k+1} \to 0$ as $\e \to 0$ and
\begin{equation*}
A_{k,i}^*[-\gamma_k] = \frac{A_{k+1,i,-}[0]}{A_{k,i-1,+}[\gamma_k]} > 0,
\end{equation*}
\begin{equation*}
A_{k,i}^*[-\gamma_k+1] = \frac{A_{k+1,i,-}[1] A_{k,i-1,+}[\gamma_k] - A_{k+1,i,-}[0] A_{k,i-1,+}[\gamma_k+1]}{A_{k,i-1,+}[\gamma_k]^2}.
\end{equation*}
\item[$\mathbf{(v)}$] For $i+1 \leq k \leq N, \ i \in \, _N\XX$, we have
\begin{align*}
\frac{\Gamma_{i,k-1,+}(\e)}{\Gamma_{i+1,k,-}(\e)} &= A_{k,i}^*[\gamma_i] \e^{\gamma_i} + A_{k,i}^*[\gamma_i+1] \e^{\gamma_i+1} \\
&+ o_{k,i}^*(\e^{\gamma_i+1}), \ \e \in (0,\e_0],
\end{align*}
where $o_{k,i}^*(\e^{\gamma_i+1}) / \e^{\gamma_i+1} \to 0$ as $\e \to 0$ and
\begin{equation*}
A_{k,i}^*[\gamma_i] = \frac{A_{i,k-1,+}[\gamma_i]}{A_{i+1,k,-}[0]} > 0,
\end{equation*}
\begin{equation*}
A_{k,i}^*[\gamma_i+1] = \frac{A_{i,k-1,+}[\gamma_i+1] A_{i+1,k,-}[0] - A_{i,k-1,+}[\gamma_i] A_{i+1,k,-}[1]}{A_{i+1,k,-}[0]^2}.
\end{equation*}
\item[$\mathbf{(vi)}$] For $i \in \XX$, we have
\begin{equation*}
E_{ii}(\e) = B_{ii}[\gamma_i-1] \e^{\gamma_i-1} + B_{ii}[\gamma_i] \e^{\gamma_i} + \dot{o}_{ii}(\e^{\gamma_i}), \ \e \in (0,\e_0],
\end{equation*}
where $\dot{o}_{ii}(\e^{\gamma_i}) / \e^{\gamma_i} \to 0$ as $\e \to 0$ and
\begin{equation*}
B_{00}[0] = b_0[0] > 0, \quad B_{00}[1] = b_0[1] + \sum_{k \in \, _0\XX} b_k[0] A_{k,0}^*[1],
\end{equation*}
\begin{equation*}
B_{ii}[-1] = b_0[0] A_{0,i}^*[-1] > 0, \ i \in \, _0\XX,
\end{equation*}
\begin{equation*}
B_{ii}[0] = b_0[1] A_{0,i}^*[-1] + b_i[0] + \sum_{k \in \, _i\XX}b_k[0] A_{k,i}^*[0], \ i \in \, _0\XX.
\end{equation*}
\end{enumerate}
\end{lemma}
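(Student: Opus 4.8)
\emph{Proof proposal.} The plan is to mirror the argument of Lemma~\ref{lemmaH1}: substitute the asymptotic expansions supplied by conditions ${\bf D}_1$--${\bf F}_1$ into the explicit formulas (\ref{defGammaij}) and (\ref{formulaEii})--(\ref{formulaENN}), and apply the operational rules for Laurent asymptotic expansions (multiple product, division and summation rules). The single new feature under ${\bf H_2}$ is that the factor $p_{0,+}(\e)$ is asymptotically of order $\e$ rather than $O(1)$, and the whole proof amounts to tracking where this extra power of $\e$ propagates. I would first record the relevant sign pattern of the coefficients: condition ${\bf F}_1$ forces $a_{0,-}[0] = 1 - a_{0,+}[0] = 1 > 0$, so that $a_{i,-}[0] > 0$ for all $i \in \XX$ and $a_{i,+}[0] > 0$ for all $i \in \, _0\XX$, whereas $a_{0,+}[0] = 0$ with $l_{0,+} = 1$ and $a_{0,+}[1] > 0$. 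Part $\mathbf{(i)}$ is then verbatim part $\mathbf{(i)}$ of Lemma~\ref{lemmaH1}: since $e_i(\e) = e_{i,-}(\e) + e_{i,+}(\e)$, it follows from condition ${\bf E}_1$ and the summation rule.

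For parts $\mathbf{(ii)}$ and $\mathbf{(iii)}$, I would rewrite $\Gamma_{i,j,\pm}(\e)$ via definition (\ref{defGammaij}) as the product of the ${\bf D}_1$-expansions of $p_{k,\pm}(\e)$, $i \le k \le j$, and apply the multiple product rule. By the sign pattern above, the only factor with vanishing leading coefficient is $p_{0,+}(\e)$; hence $\Gamma_{i,j,+}(\e)$ carries a leading power $\e^{\gamma_i}$ (the extra $\e$ appearing precisely when $i=0$), while $\Gamma_{i,j,-}(\e)$ is of order $O(1)$. The stated coefficients $A_{i,j,+}[\gamma_i]$, $A_{i,j,+}[\gamma_i+1]$, $A_{i,j,-}[0]$, $A_{i,j,-}[1]$ are exactly what the product rule returns, with positivity of the leading ones inherited from the $a$-coefficients. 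For parts $\mathbf{(iv)}$ and $\mathbf{(v)}$, I would substitute the expansions of $\mathbf{(ii)}$ and $\mathbf{(iii)}$ into the two quotients, in the spirit of (\ref{lmmH1quotient1})--(\ref{lmmH1quotient2}), and apply the division rule for Laurent asymptotic expansions: in $\mathbf{(iv)}$ the denominator $\Gamma_{k,i-1,+}(\e)$ has leading order $\e^{\gamma_k}$, so the quotient has leading order $\e^{-\gamma_k}$; in $\mathbf{(v)}$ the numerator $\Gamma_{i,k-1,+}(\e)$ has leading order $\e^{\gamma_i}$, so the quotient has leading order $\e^{\gamma_i}$. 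The division rule then produces the two displayed coefficients, positive in the leading term because the leading coefficients of $\mathbf{(ii)}$--$\mathbf{(iii)}$ are.

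Part $\mathbf{(vi)}$ is the substantive step. For $i = 0$ I would use (\ref{formulaE00}): $e_0(\e) = O(1)$ and each quotient $\Gamma_{0,k-1,+}(\e)/\Gamma_{1,k,-}(\e)$ is of order $\e$ by $\mathbf{(v)}$ with $i = 0$, so $E_{00}(\e) = O(1)$ with $B_{00}[0] = b_0[0] > 0$, and the product and summation rules yield $B_{00}[1] = b_0[1] + \sum_{k \in \, _0\XX} b_k[0] A_{k,0}^*[1]$. For $i \in \, _0\XX$ the key observation is that, among all the terms of (\ref{formulaEii}), only the $k = 0$ term of the first sum, $e_0(\e)\, \Gamma_{1,i,-}(\e)/\Gamma_{0,i-1,+}(\e)$, is of order $\e^{-1}$: its quotient carries $\gamma_0 = 1$ by $\mathbf{(iv)}$, while all other quotients are $O(1)$ by $\mathbf{(iv)}$ with $k \ge 1$ and by $\mathbf{(v)}$, and $e_i(\e) = O(1)$. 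Applying the product rule to this pivotal term and the product plus summation rules to the remaining $O(1)$ part, I would read off the coefficient of $\e^{-1}$, namely $b_0[0] A_{0,i}^*[-1] > 0$, and the coefficient of $\e^0$, which gathers the cross term $b_0[1] A_{0,i}^*[-1]$, the contribution $b_i[0]$ from $e_i$, and $b_0[0] A_{0,i}^*[0] + \sum_{k \neq 0, i} b_k[0] A_{k,i}^*[0] = \sum_{k \in \, _i\XX} b_k[0] A_{k,i}^*[0]$, which is exactly the asserted $B_{ii}[0]$.

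The main obstacle is precisely this last collection of the $\e^0$-coefficient in $\mathbf{(vi)}$: one must combine the $\e^0$-part of $e_0(\e)$ with the $\e^0$-part (not the $\e^{-1}$-part) of the singular quotient, and the $\e^{-1}$-part of the quotient with the $\e^1$-part of $e_0(\e)$, without double counting, and then verify that the $k = 0$ contribution at order $\e^0$ merges cleanly with the sum over $k \in \, _i\XX$. Everything else is a routine application of the operational rules already used for Lemma~\ref{lemmaH1}.
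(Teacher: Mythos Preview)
Your proposal is correct and follows essentially the same approach as the paper's own proof: both reduce parts $\mathbf{(i)}$ and $\mathbf{(iii)}$ to Lemma~\ref{lemmaH1}, handle $\mathbf{(ii)}$ by the multiple product rule after isolating the single factor $p_{0,+}(\e)$ of order $\e$, obtain $\mathbf{(iv)}$--$\mathbf{(v)}$ from the division rule for Laurent expansions, and prove $\mathbf{(vi)}$ by treating $i=0$ and $i\in{}_0\XX$ separately, in the latter case singling out the $k=0$ term of (\ref{formulaEii}) as the unique $O(\e^{-1})$ contribution. Your discussion of the $\e^{0}$-coefficient bookkeeping in $\mathbf{(vi)}$ is in fact more explicit than the paper's, which simply invokes the product and multiple summation rules at that point.
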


\begin{proof}
Let us first note that the quantities in parts $\mathbf{(i)}$ and $\mathbf{(iii)}$ do not depend on $p_{0,+}(\e)$, so the proofs for these parts are the same as the proofs for parts $\mathbf{(i)}$ and $\mathbf{(ii)}$ in Lemma \ref{lemmaH1}, respectively.

We now prove part $\mathbf{(ii)}$. Notice that it follows from conditions ${\bf D}_1$ and ${\bf H_2}$ that $p_{i,+}(\e) = a_{i,+}[\gamma_i] \e^{\gamma_i} + a_{i,+}[\gamma_i+1] \e^{\gamma_i+1} + o_{i,+}(\e^{\gamma_i+1})$, $i \in \XX$. Using this and the definition (\ref{defGammaij}) of $\Gamma_{i,j,+}(\e)$ gives
\begin{align*}
\Gamma_{i,j,+}(\e) &= ( a_{i,+}[\gamma_i] \e^{\gamma_i} + a_{i,+}[\gamma_i+1] \e^{\gamma_i+1} + o_{i,+}(\e^{\gamma_i+1}) ) \\
&\times ( a_{i+1,+}[0] + a_{i+1,+}[1] \e + o_{i+1,+}(\e) ) \\
&\times \cdots \times \\
&\times ( a_{j,+}[0] + a_{j,+}[1] \e + o_{j,+}(\e) ), \ 0 \leq i \leq j \leq N.
\end{align*}

An application of the multiple product rule for asymptotic expansions shows that part $\mathbf{(ii)}$ holds.

Now, using the results in parts $\mathbf{(ii)}$ and $\mathbf{(iii)}$ we get, for $0 \leq k \leq i-1$, $i \in \, _0\XX$,
\begin{equation} \label{lmmH2quotient1}
\frac{\Gamma_{k+1,i,-}(\e)}{\Gamma_{k,i-1,+}(\e)} = \frac{A_{k+1,i,-}[0] + A_{k+1,i,-}[1] \e + o_{k+1,i,-}(\e)}{A_{k,i-1,+}[\gamma_k] \e^{\gamma_k} + A_{k,i-1,+}[\gamma_k+1] \e^{\gamma_k+1} + o_{k,i-1,+}(\e^{\gamma_k+1})},
\end{equation}
and, for $i+1 \leq k \leq N$, $i \in \, _N\XX$,
\begin{equation} \label{lmmH2quotient2}
\frac{\Gamma_{i,k-1,+}(\e)}{\Gamma_{i+1,k,-}(\e)} = \frac{A_{i,k-1,+}[\gamma_i] \e^{\gamma_i} + A_{i,k-1,+}[\gamma_i+1] \e^{\gamma_i+1} + o_{i,k-1,+}(\e^{\gamma_i+1})}{A_{i+1,k,-}[0] + A_{i+1,k,-}[1] \e + o_{i+1,k,-}(\e)}.
\end{equation}

Notice that it is possible that the quantity in equation (\ref{lmmH2quotient1}) tends to infinity as $\e \to 0$. Applying the division rule for Laurent asymptotic expansions in relations (\ref{lmmH2quotient1}) and (\ref{lmmH2quotient2}) yields the asymptotic relations given in parts $\mathbf{(iv)}$ and $\mathbf{(v)}$.

In order to prove part $\mathbf{(vi)}$, we consider the cases $i=0$ and $i \in \, _0\XX$ separately. First note that it follows from relation (\ref{formulaE00}) and the results in parts $\mathbf{(i)}$ and $\mathbf{(iv)}$ that
\begin{align*}
E_{00}(\e) &= b_0[0] + b_0[1] \e + \dot{o}_0(\e) \\
&+ \sum_{k \in \, _0\XX} (b_k[0] + b_k[1] \e + \dot{o}_k(\e)) (A_{k,0}^*[1] \e + A_{k,0}^*[2] \e^2 + o_{k,0}^*(\e^2)).
\end{align*}

Using the product rule and the multiple summation rule for asymptotic expansions we obtain the asymptotic relation in part $\mathbf{(vi)}$ for the case $i=0$.

If $i \in \, _0\XX$, relation (\ref{formulaEii}) implies together with parts $\mathbf{(i)}$, $\mathbf{(iv)}$, and $\mathbf{(v)}$ that
\begin{align*}
E_{ii}(\e) &= b_i[0] + b_i[1] \e + \dot{o}_i(\e) \\
&+ (b_0[0] + b_0[1] \e + \dot{o}_0(\e)) (A_{0,i}^*[-1] \e^{-1} + A_{0,i}^*[0] + o_{0,i}^*(1)) \\
&+ \sum_{k \in \, _{0,i}\XX} (b_k[0] + b_k[1] \e + \dot{o}_k(\e)) (A_{k,i}^*[0] + A_{k,i}^*[1] \e + o_{k,i}^*(\e)).
\end{align*}

Notice that the term corresponding to $k=0$ is of order $O(\e^{-1})$ while all other terms in the sum are of order $O(1)$. We can again apply the product rule and multiple summation rule for Laurent asymptotic expansions and in this case, the asymptotic relation in part $\mathbf{(vi)}$ is obtained for $i \in \, _0\XX$.
\end{proof}

The following theorem gives second order asymptotic expansions for stationary and conditional quasi-stationary probabilities. In particular, part $\mathbf{(i)}$ of this theorem shows that there exist limits for stationary probabilities, $\pi_i(0) = \lim_{\e \to 0} \pi_i(\e)$, $i \in \XX$, where $\pi_0(0) = 1$ and $\pi_i(0) = 0$ for $i \in \, _0\XX$. Furthermore, part $\mathbf{(ii)}$ of the theorem shows, in particular, that there exist limits for conditional quasi-stationary probabilities $\tilde{\pi}_i(0) = \lim_{\e \to 0} \tilde{\pi}_i(\e)$, $i \in \, _0\XX$, where $\tilde{\pi}_i(0) > 0$, $i \in \, _0\XX$. The corresponding higher order asymptotic expansions are given in Theorem 5.
\begin{theorem} \label{theoremH2}
Assume that conditions  ${\bf A}$--${\bf C}$, ${\bf D}_1$--${\bf F}_1$, ${\bf G}$ and ${\bf H_2}$ hold. Then:
\begin{enumerate}
\item[$\mathbf{(i)}$] We have the following asymptotic relation for the stationary probabilities $\pi_i(\e)$, $i \in \XX$,
\begin{equation*}
\pi_i(\e) = c_i[\tilde{l}_i] \e^{\tilde{l}_i} + c_i[\tilde{l}_i+1] \e^{\tilde{l}_i+1} + o_i(\e^{\tilde{l}_i+1}), \ \e \in (0,\e_0],
\end{equation*}
where $\tilde{l}_i = I(i \neq 0)$, $o_i(\e^{\tilde{l}_i+1}) / \e^{\tilde{l}_i+1} \to 0$ as $\e \to 0$, and
\begin{equation*}
c_i[\tilde{l}_i] = \frac{b_i[0]}{B_{ii}[-\tilde{l}_i]} > 0, \quad c_i[\tilde{l}_i+1] = \frac{b_i[1] B_{ii}[-\tilde{l}_i] - b_i[0] B_{ii}[-\tilde{l}_i+1]}{B_{ii}[-\tilde{l}_i]^2},
\end{equation*}
where $B_{ii}[-1]$, $i \in \, _0\XX$, $B_{ii}[0]$, $i \in \XX$, and $B_{00}[1]$, can be computed from the formulas given in Lemma \ref{lemmaH2}.
\item[$\mathbf{(ii)}$] We have the following asymptotic relation for the conditional quasi-stationary probabilities $\tilde{\pi}_i(\e)$, $i \in \, _0\XX,$
\begin{equation*}
\tilde{\pi}_i(\e) = \tilde{c}_i[0] + \tilde{c}_i[1] \e + \tilde{o}_i(\e), \ \e \in (0,\e_0],
\end{equation*}
where $\tilde{o}_i(\e) / \e \to 0$ as $\e \to 0$ and
\begin{equation*}
\tilde{c}_i[0] = \frac{c_i[1]}{d[1]} > 0, \quad \tilde{c}_i[1] = \frac{c_i[2] d[1] - c_i[1] d[2]}{d[1]^2},
\end{equation*}
where $d[l] = \sum_{j \in \, _0\XX} c_i[l], \ l=1,2$.
\end{enumerate}
\end{theorem}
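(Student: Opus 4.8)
The plan is to obtain both expansions directly from the exact relation $\pi_i(\e) = e_i(\e)/E_{ii}(\e)$ of (\ref{stationary}), the definition (\ref{quasiF2}) of the conditional quasi-stationary probabilities, and the Laurent asymptotic expansions already assembled in Lemma \ref{lemmaH2}, manipulated through the operational rules for Laurent asymptotic expansions (the division rule and the multiple summation rule in particular) recalled in Subsection 6.1. Everything reduces to a careful division of two known expansions.

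For part $\mathbf{(i)}$, I would treat the cases $i=0$ and $i\in{}_0\XX$ together via the indicator $\gamma_i = I(i=0)$. Lemma \ref{lemmaH2}$\mathbf{(i)}$ gives the Taylor expansion $e_i(\e) = b_i[0] + b_i[1]\e + \dot{o}_i(\e)$ with $b_i[0]>0$, and Lemma \ref{lemmaH2}$\mathbf{(vi)}$ gives $E_{ii}(\e) = B_{ii}[\gamma_i-1]\e^{\gamma_i-1} + B_{ii}[\gamma_i]\e^{\gamma_i} + \dot{o}_{ii}(\e^{\gamma_i})$ with strictly positive leading coefficient $B_{ii}[\gamma_i-1]$. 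Hence the denominator has order $\e^{\gamma_i-1}$ while the numerator has order $\e^{0}$, so
\begin{equation*}
\pi_i(\e) = \frac{b_i[0] + b_i[1]\e + \dot{o}_i(\e)}{B_{ii}[\gamma_i-1]\e^{\gamma_i-1} + B_{ii}[\gamma_i]\e^{\gamma_i} + \dot{o}_{ii}(\e^{\gamma_i})}
\end{equation*}
has leading order $\e^{1-\gamma_i}$, i.e.\ $\e^{\tilde{l}_i}$ with $\tilde{l}_i = I(i\neq 0) = 1-\gamma_i$, so that $-\tilde{l}_i = \gamma_i - 1$. Applying the division rule for Laurent asymptotic expansions then produces $c_i[\tilde{l}_i] = b_i[0]/B_{ii}[-\tilde{l}_i] > 0$ and $c_i[\tilde{l}_i+1] = (b_i[1]B_{ii}[-\tilde{l}_i] - b_i[0]B_{ii}[-\tilde{l}_i+1])/B_{ii}[-\tilde{l}_i]^2$, together with the remainder estimate $o_i(\e^{\tilde{l}_i+1})$; substituting $B_{00}[0]=b_0[0]$ from Lemma \ref{lemmaH2}$\mathbf{(vi)}$ recovers $\pi_0(0)=1$, and $\tilde{l}_i=1$ gives $\pi_i(0)=0$ for $i\in{}_0\XX$.

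For part $\mathbf{(ii)}$, I would substitute the expansions just obtained into (\ref{quasiF2}). Since $\tilde{l}_j=1$ for $j\in{}_0\XX$, each term in the denominator is $\pi_j(\e) = c_j[1]\e + c_j[2]\e^2 + o_j(\e^2)$ with $c_j[1]>0$, so by the multiple summation rule $\sum_{j\in{}_0\XX}\pi_j(\e) = d[1]\e + d[2]\e^2 + \dot{o}(\e^2)$ where $d[l]=\sum_{j\in{}_0\XX}c_j[l]$ and $d[1]>0$ (no cancellation occurs, since all $c_j[1]$ are positive). Then
\begin{equation*}
\tilde{\pi}_i(\e) = \frac{c_i[1]\e + c_i[2]\e^2 + o_i(\e^2)}{d[1]\e + d[2]\e^2 + \dot{o}(\e^2)}
\end{equation*}
is a quotient of two expansions of the common order $\e^{1}$, and the division rule yields a Taylor expansion of order $\e^{0}$ with $\tilde{c}_i[0]=c_i[1]/d[1]>0$ and $\tilde{c}_i[1]=(c_i[2]d[1]-c_i[1]d[2])/d[1]^2$, which is the asserted formula.

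The work is almost entirely bookkeeping; the one point that must be checked with care is that the leading coefficient of the denominator is nonzero in each division — namely $B_{ii}[\gamma_i-1]>0$ in part $\mathbf{(i)}$, which is precisely what Lemma \ref{lemmaH2}$\mathbf{(vi)}$ asserts, and $d[1]>0$ in part $\mathbf{(ii)}$, which holds because the positivity $c_j[1]>0$ from part $\mathbf{(i)}$ rules out cancellation in the sum. One should equally confirm that the pivotal exponent $\tilde{l}_i = 1-\gamma_i$ is genuinely the order of $\pi_i(\e)$ and not merely an upper bound; this again follows from $b_i[0]>0$ together with $B_{ii}[\gamma_i-1]>0$. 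With these facts in hand, the operational rules for Laurent asymptotic expansions deliver the coefficient formulas and remainder orders mechanically.
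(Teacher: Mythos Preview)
Your proposal is correct and follows essentially the same route as the paper's own proof: both parts rest on the formula $\pi_i(\e)=e_i(\e)/E_{ii}(\e)$, the expansions from Lemma~\ref{lemmaH2}$\mathbf{(i)}$ and $\mathbf{(vi)}$, the identity $\tilde{l}_i=1-\gamma_i$, and the division and multiple-summation rules for Laurent asymptotic expansions. If anything, you are slightly more explicit than the paper in checking that the leading denominator coefficients $B_{ii}[\gamma_i-1]$ and $d[1]$ are strictly positive so that the division rule applies and the claimed pivotal orders are genuine.
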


\begin{proof}
It follows from parts $\mathbf{(i)}$ and $\mathbf{(vi)}$ in Lemma \ref{lemmaH2} that, for $i \in \XX$,
\begin{equation} \label{thmH2quotient1}
\pi_i(\e) = \frac{e_i(\e)}{E_{ii}(\e)} = \frac{b_i[0] + b_i[1] \e + \dot{o}_i(\e)}{B_{ii}[\gamma_i-1] \e^{\gamma_i-1} + B_{ii}[\gamma_i] \e^{\gamma_i} + \dot{o}_{ii}(\e^{\gamma_i})}.
\end{equation}

We also have $\gamma_i = I(i=0) = 1-I(i \neq 0) = 1-\tilde{l}_i$. By changing the indicator function and then using the division rule for Laurent asymptotic expansions in relation (\ref{thmH2quotient1}), we obtain the asymptotic expansion given in part $\mathbf{(i)}$.

In order to prove part $\mathbf{(ii)}$ we first use part $\mathbf{(i)}$ for $i \in \, _0\XX$ to get
\begin{equation*}
\tilde{\pi}_i(\e) = \frac{\pi_i(\e)}{\sum_{j \in \, _0\XX} \pi_j(\e)} = \frac{c_i[1] \e + c_i[2] \e^2 + o_i(\e^2)}{\sum_{j \in \, _0\XX} ( c_j[1] \e + c_j[2] \e^2 + o_j(\e^2) )},
\end{equation*}
and then we apply the multiple summation rule and the division rule for asymptotic expansions in this relation.
\end{proof} 
\vspace{3mm}

{\bf 5.3. First and second order asymptotic expansions for  \\
\makebox[14mm]{} stationary and conditional quasi-stationary distributions \\
\makebox[14mm]{} under condition ${\bf H_3}$} \\

In the case where condition ${\bf H_3}$ holds, both state $0$ and state $N$ are asymptotically absorbing for the semi-Markov process. This means that $p_{0,+}(\e) \sim O(\e)$ and $p_{N,-}(\e) \sim O(\e)$ which makes the asymptotic analysis of relations (\ref{formulaEii})--(\ref{formulaENN}) even more involved.

The following lemma gives asymptotic expansions for quantities given in relations (\ref{formulaEii})--(\ref{formulaENN}).
\begin{lemma}\label{lemmaH3}
Assume that conditions  ${\bf A}$--${\bf C}$, ${\bf D}_1$--${\bf F}_1$, ${\bf G}$  and ${\bf H_3}$ hold. Then:
\begin{enumerate}
\item[$\mathbf{(i)}$] For $i \in \XX$, we have
\begin{equation*}
e_i(\e) = b_i[0] + b_i[1] \e + \dot{o}_i(\e), \ \e \in (0,\e_0],
\end{equation*}
where $\dot{o}_i(\e) / \e \to 0$ as $\e \to 0$ and
\begin{equation*}
b_i[0] = b_{i,-}[0] + b_{i,+}[0] > 0, \quad b_i[1] = b_{i,-}[1] + b_{i,+}[1].
\end{equation*}
\item[$\mathbf{(ii)}$] For $0 \leq i \leq j \leq N$, we have
\begin{equation*}
\Gamma_{i,j,+}(\e) = A_{i,j,+}[\gamma_i] \e^{\gamma_i} + A_{i,j,+}[\gamma_i+1] \e^{\gamma_i+1} + o_{i,j,+}(\e^{\gamma_i+1}), \ \e \in (0,\varepsilon_0],
\end{equation*}
where $o_{i,j,+}(\e^{\gamma_i+1}) / \e^{\gamma_i+1} \to 0$ as $\e \to 0$ and
\begin{equation*}
A_{i,j,+}[\gamma_i] = a_{i,+}[\gamma_i] a_{i+1,+}[0] \cdots a_{j,+}[0] > 0,
\end{equation*}
\begin{equation*}
A_{i,j,+}[\gamma_i+1] = \sum_{n_i + n_{i+1} + \cdots + n_j = 1} a_{i,+}[\gamma_i + n_i] a_{i+1,+}[n_{i+1}] \cdots a_{j,+}[n_j].
\end{equation*}
\item[$\mathbf{(iii)}$] For $0 \leq i \leq j \leq N$, we have
\begin{equation*}
\Gamma_{i,j,-}(\e) = A_{i,j,-}[\beta_j] \e^{\beta_j} + A_{i,j,-}[\beta_j+1] \e^{\beta_j+1} + o_{i,j,-}(\e^{\beta_j+1}), \ \e \in (0,\e_0],
\end{equation*}
where $o_{i,j,-}(\e^{\beta_j+1}) / \e^{\beta_j+1} \to 0$ as $\e \to 0$ and
\begin{equation*}
A_{i,j,-}[\beta_j] = a_{i,-}[0] \cdots a_{j-1,-}[0] a_{j,-}[\beta_j] > 0,
\end{equation*}
\begin{equation*}
A_{i,j,-}[\beta_j+1] = \sum_{n_i + \cdots + n_{j-1} + n_j = 1} a_{i,-}[n_i] \cdots a_{j-1,-}[n_{j-1}] a_{j,-}[\beta_j + n_j].
\end{equation*}
\item[$\mathbf{(iv)}$] For $0 \leq k \leq i-1, \ i \in \, _0\XX$, we have
\begin{align*}
\frac{\Gamma_{k+1,i,-}(\e)}{\Gamma_{k,i-1,+}(\e)} &= A_{k,i}^*[\beta_i-\gamma_k] \e^{\beta_i-\gamma_k} + A_{k,i}^*[\beta_i-\gamma_k+1] \e^{\beta_i-\gamma_k+1} \\
&+ o_{k,i}^*(\e^{\beta_i-\gamma_k+1}), \ \e \in (0,\e_0],
\end{align*}
where $o_{k,i}^*(\e^{\beta_i-\gamma_k+1}) / \e^{\beta_i-\gamma_k+1} \to 0$ as $\e \to 0$ and
\begin{equation*}
A_{k,i}^*[\beta_i-\gamma_k] = \frac{A_{k+1,i,-}[\beta_i]}{A_{k,i-1,+}[\gamma_k]} > 0,
\end{equation*}
\begin{align*}
&A_{k,i}^*[\beta_i-\gamma_k+1] \\
&\quad \quad = \frac{A_{k+1,i,-}[\beta_i+1] A_{k,i-1,+}[\gamma_k] - A_{k+1,i,-}[\beta_i] A_{k,i-1,+}[\gamma_k+1]}{A_{k,i-1,+}[\gamma_k]^2}.
\end{align*}
\item[$\mathbf{(v)}$] For $i+1 \leq k \leq N, \ i \in \, _N\XX$, we have
\begin{align*}
\frac{\Gamma_{i,k-1,+}(\e)}{\Gamma_{i+1,k,-}(\e)} &= A_{k,i}^*[\gamma_i-\beta_k] \e^{\gamma_i-\beta_k} + A_{k,i}^*[\gamma_i-\beta_k+1] \e^{\gamma_i-\beta_k+1} \\
&+ o_{k,i}^*(\e^{\gamma_i-\beta_k+1}), \ \e \in (0,\e_0],
\end{align*}
where $o_{k,i}^*(\e^{\gamma_i-\beta_k+1}) / \e^{\gamma_i-\beta_k+1} \to 0$ as $\e \to 0$ and
\begin{equation*}
A_{k,i}^*[\gamma_i-\beta_k] = \frac{A_{i,k-1,+}[\gamma_i]}{A_{i+1,k,-}[\beta_k]} > 0,
\end{equation*}
\begin{align*}
&A_{k,i}^*[\gamma_i-\beta_k+1] \\
&\quad \quad = \frac{A_{i,k-1,+}[\gamma_i+1] A_{i+1,k,-}[\beta_k] - A_{i,k-1,+}[\gamma_i] A_{i+1,k,-}[\beta_k+1]}{A_{i+1,k,-}[\beta_k]^2}.
\end{align*}
\item[$\mathbf{(vi)}$] For $i \in \XX$, we have
\begin{equation*}
E_{ii}(\e) = B_{ii}[\gamma_i+\beta_i-1] \e^{\gamma_i+\beta_i-1} + B_{ii}[\gamma_i+\beta_i] \e^{\gamma_i+\beta_i} + \dot{o}_{ii}(\e^{\gamma_i+\beta_i}), \ \e \in (0,\e_0],
\end{equation*}
where $\dot{o}_{ii}(\e^{\gamma_i+\beta_i}) / \e^{\gamma_i+\beta_i} \to 0$ as $\e \to 0$ and
\begin{equation*}
B_{ii}[0] = b_i[0] + b_{N-i}[0] A_{N-i,i}^*[0] > 0, \ i=0,N,
\end{equation*}
\begin{equation*}
B_{ii}[1] = b_{N-i}[1] A_{N-i,i}^*[0] + b_i[1] + \sum_{k \in \, _i\XX} b_k[0] A_{k,i}^*[1], \ i=0,N,
\end{equation*}
\begin{equation*}
B_{ii}[-1] = b_0[0] A_{0,i}^*[-1] + b_N[0] A_{N,i}^*[-1] > 0, \ i \in \, _{0,N}\XX,
\end{equation*}
\begin{equation*}
B_{ii}[0] = b_0[1] A_{0,i}^*[-1] + b_N[1] A_{N,i}^*[-1] + b_i[0] + \sum_{k \in \, _i\XX} b_k[0] A_{k,i}^*[0], \ i \in \, _{0,N}\XX.
\end{equation*}
\end{enumerate}
\end{lemma}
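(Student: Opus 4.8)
The plan is to follow exactly the same scheme as in the proofs of Lemmas \ref{lemmaH1} and \ref{lemmaH2}, adapting the bookkeeping of leading exponents to reflect that now \emph{two} boundary probabilities, $p_{0,+}(\e)$ and $p_{N,-}(\e)$, are of order $O(\e)$. Recall the indicator $\gamma_i = I(i=0)$ from Subsection 5.2 and introduce the companion indicator $\beta_i = I(i=N)$, so that condition ${\bf H_3}$ together with ${\bf D}_1$ gives $p_{i,+}(\e) = a_{i,+}[\gamma_i]\e^{\gamma_i} + a_{i,+}[\gamma_i+1]\e^{\gamma_i+1} + o_{i,+}(\e^{\gamma_i+1})$ and $p_{i,-}(\e) = a_{i,-}[\beta_i]\e^{\beta_i} + a_{i,-}[\beta_i+1]\e^{\beta_i+1} + o_{i,-}(\e^{\beta_i+1})$, where the leading coefficients $a_{i,+}[\gamma_i]$ and $a_{i,-}[\beta_i]$ are strictly positive by ${\bf D}_1$(b)--(c). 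Part $\mathbf{(i)}$ is then identical to part $\mathbf{(i)}$ of Lemma \ref{lemmaH1}, being a direct consequence of condition ${\bf E}_1$ and $e_i(\e)=e_{i,-}(\e)+e_{i,+}(\e)$, and requires no change.

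For parts $\mathbf{(ii)}$ and $\mathbf{(iii)}$ I would substitute the above one-term-shifted expansions for $p_{i,+}(\e)$, resp.\ $p_{i,-}(\e)$, into the definition (\ref{defGammaij}) of $\Gamma_{i,j,\pm}(\e)$ and apply the multiple product rule for (Laurent) asymptotic expansions. In the product $\Gamma_{i,j,+}(\e) = p_{i,+}(\e)\cdots p_{j,+}(\e)$ only the first factor can contribute an $\e$ (when $i=0$), since $\gamma_k=0$ for $k=i+1,\dots,j$; hence the leading exponent is $\gamma_i$ and the stated formulas for $A_{i,j,+}[\gamma_i]$ and $A_{i,j,+}[\gamma_i+1]$ follow, with positivity of the leading coefficient inherited from the positive leading coefficients of each factor. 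Symmetrically, in $\Gamma_{i,j,-}(\e) = p_{i,-}(\e)\cdots p_{j,-}(\e)$ only the \emph{last} factor can contribute an $\e$ (when $j=N$), so the leading exponent is $\beta_j$ and the coefficient formulas in $\mathbf{(iii)}$ drop out. Parts $\mathbf{(iv)}$ and $\mathbf{(v)}$ are obtained by forming the quotients of these expansions — $\Gamma_{k+1,i,-}/\Gamma_{k,i-1,+}$ has numerator of leading order $\e^{\beta_i}$ (since $i$ may equal $N$ only through the upper index of the numerator, which here is $i\le N$; note $\beta_i$ appears because the last index of the "$-$"-product is $i$) and denominator of leading order $\e^{\gamma_k}$, giving net leading exponent $\beta_i-\gamma_k$ — and then invoking the division rule for Laurent asymptotic expansions; the first- and second-order coefficient formulas are the usual quotient expressions, and positivity of the leading coefficient is preserved because it is a ratio of positive numbers. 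The analogous computation gives $\mathbf{(v)}$ with net leading exponent $\gamma_i-\beta_k$.

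Part $\mathbf{(vi)}$ is assembled from relation (\ref{formulaEii}) by multiplying the expansion of $e_k(\e)$ from $\mathbf{(i)}$ with the expansion of the corresponding $\Gamma$-quotient from $\mathbf{(iv)}$ or $\mathbf{(v)}$ (product rule) and then summing over $k\in\,_i\XX$ (multiple summation rule for Laurent expansions). The delicate point, and the one I expect to be the main obstacle in writing this cleanly, is the exponent bookkeeping: one must check for each $i$ which terms in the two sums of (\ref{formulaEii}) carry the lowest power of $\e$. When $i\in\,_{0,N}\XX$, the term $k=0$ in the first sum has $\Gamma$-quotient of order $\e^{\beta_0-\gamma_0}=\e^{-1}$ and the term $k=N$ in the second sum has order $\e^{\gamma_N-\beta_N}=\e^{-1}$, while all other terms are $O(1)$; hence $E_{ii}(\e)$ has leading exponent $-1 = \gamma_i+\beta_i-1$ with $B_{ii}[-1] = b_0[0]A_{0,i}^*[-1] + b_N[0]A_{N,i}^*[-1] > 0$, and the order-$\e^0$ coefficient collects the second-order parts of those two singular terms together with the leading parts of $e_i$ and of the remaining $O(1)$ terms. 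When $i=0$ (resp.\ $i=N$) there is no first (resp.\ second) sum reaching order $\e^{-1}$: the only possibly-singular contribution would come from the opposite boundary, namely the $k=N$ term in $E_{00}$ whose $\Gamma$-quotient $\Gamma_{0,N-1,+}/\Gamma_{1,N,-}$ has order $\e^{\gamma_0-\beta_N}=\e^{1-1}=\e^0$, so in fact $E_{00}(\e)=O(1)$, consistent with $\gamma_0+\beta_0-1 = 1+0-1 = 0$, and $B_{00}[0] = b_0[0] + b_N[0]A_{N,0}^*[0] > 0$. Carrying out this case split carefully — and in particular verifying that the unified exponent $\gamma_i+\beta_i-1$ correctly reproduces $-1$ on the interior and $0$ at both boundaries — is the bulk of the work; once it is done, the product and summation rules deliver the displayed coefficient formulas mechanically.
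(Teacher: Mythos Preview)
Your proposal is correct and follows essentially the same route as the paper: parts $\mathbf{(i)}$--$\mathbf{(iii)}$ are obtained exactly as you describe (the paper simply notes that $\mathbf{(i)}$ and $\mathbf{(ii)}$ carry over unchanged from Lemma~\ref{lemmaH2} and then handles $\mathbf{(iii)}$ via the multiple product rule with the shifted factor at index $j$), parts $\mathbf{(iv)}$--$\mathbf{(v)}$ via the division rule for Laurent expansions, and part $\mathbf{(vi)}$ by the same three-way case split $i=0$, $i=N$, $i\in\,_{0,N}\XX$ with the product and multiple summation rules applied to relation~(\ref{formulaEii}). Your exponent bookkeeping for the boundary cases, in particular the observation that at $i=0$ the $k=N$ term has order $\e^{\gamma_0-\beta_N}=\e^0$ so that $E_{00}(\e)=O(1)$, matches the paper's computation exactly.
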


\begin{proof}
We first note that the quantities in parts $\mathbf{(i)}$ and $\mathbf{(ii)}$ do not depend on $p_{N,-}(\e)$, so the proofs for these parts are the same as the proofs for parts $\mathbf{(i)}$ and $\mathbf{(ii)}$ in Lemma \ref{lemmaH2}, respectively.

In order to prove part $\mathbf{(iii)}$ we notice that it follows from conditions ${\bf D}_1$ and ${\bf H_3}$ that $p_{i,-}(\e) = a_{i,-}[\beta_i] \e^{\beta_i} + a_{i,-}[\beta_i+1] + \e^{\beta_i+1} + o_{i,-}(\e^{\beta_i+1})$, $i \in \XX$. From this and the definition (\ref{defGammaij}) of $\Gamma_{i,j,-}(\e)$ we get, for $0 \leq i \leq j \leq N$,
\begin{align*}
\Gamma_{i,j,-}(\e) &= ( a_{i,-}[0] + a_{i,-}[1] \e + o_{i,-}(\e) ) \\
&\times \cdots \times \\
&\times ( a_{j-1,-}[0] + a_{j-1,-}[1] \e + o_{j-1,-}(\e) ) \\
&\times ( a_{j,-}[\beta_j] \e^{\beta_j} + a_{j,-}[\beta_j+1] + \e^{\beta_j+1} + o_{j,-}(\e^{\beta_j+1}) ).
\end{align*}

By applying the multiple product rule for asymptotic expansions we obtain the asymptotic relation given in part $\mathbf{(iii)}$.

From parts $\mathbf{(ii)}$ and $\mathbf{(iii)}$ it follows that we have, for $0 \leq k \leq i-1$, $i \in \, _0\XX$,
{\small
\begin{equation} \label{lmmH3quotient1}
\frac{\Gamma_{k+1,i,-}(\e)}{\Gamma_{k,i-1,+}(\e)} = \frac{A_{k+1,i,-}[\beta_i] \e^{\beta_i} + A_{k+1,i,-}[\beta_i+1] \e^{\beta_i+1} + o_{k+1,i,-}(\e^{\beta_i+1})}{A_{k,i-1,+}[\gamma_k] \e^{\gamma_k} + A_{k,i-1,+}[\gamma_k+1] \e^{\gamma_k+1} + o_{k,i-1,+}(\e^{\gamma_k+1})}, 
\end{equation}}
and, for $i+1 \leq k \leq N$, $i \in \, _N\XX$,
{\small 
\begin{equation} \label{lmmH3quotient2}
\frac{\Gamma_{i,k-1,+}(\e)}{\Gamma_{i+1,k,-}(\e)} = \frac{A_{i,k-1,+}[\gamma_i] \e^{\gamma_i} + A_{i,k-1,+}[\gamma_i+1] \e^{\gamma_i+1} + o_{i,k-1,+}(\e^{\gamma_i+1})}{A_{i+1,k,-}[\beta_k] \e^{\beta_k} + A_{i+1,k,-}[\beta_k+1] \e^{\beta_k+1} + o_{i+1,k,-}(\e^{\beta_k+1})}.
\end{equation}}

Notice that both in relation (\ref{lmmH3quotient1}) and (\ref{lmmH3quotient2}) it is possible that the corresponding quantity tends to infinity as $\e \to 0$. 

The asymptotic relations given in parts $\mathbf{(iv)}$ and $\mathbf{(v)}$ are obtained by using the division rule for Laurent asymptotic expansions in relations (\ref{lmmH3quotient1}) and (\ref{lmmH3quotient2}).  

We finally give the proof of part $\mathbf{(vi)}$. For the case $i=0$, it follows from relation (\ref{formulaE00}) and parts $\mathbf{(i)}$ and $\mathbf{(v)}$ that
\begin{align*}
E_{00}(\e) &= b_0[0] + b_0[1] \e + \dot{o}_0(\e) \\
&+ ( b_N[0] + b_N[1] \e + \dot{o}_N(\e) ) ( A_{N,0}^*[0] + A_{N,0}^*[1] \e + o_{N,0}^*(\e) ) \\
&+ \sum_{k \in \, _{0,N}\XX} ( b_k[0] + b_k[1] \e + \dot{o}_k(\e) ) ( A_{k,0}^*[1] \e + A_{k,0}^*[2] \e^2 + o_{k,0}^*(\e^2) ).
\end{align*}

The product rule and multiple summation rule for asymptotic expansions now proves part $\mathbf{(vi)}$ for the case $i=0$.

If $i=N$, it follows from relation (\ref{formulaENN}) and parts $\mathbf{(i)}$ and $\mathbf{(iv)}$ that
\begin{align*}
E_{NN}(\e) &= b_N[0] + b_N[1] \e + \dot{o}_N(\e) \\
&+ ( b_0[0] + b_0[1] \e + \dot{o}_0(\e) ) ( A_{0,N}^*[0] + A_{0,N}^*[1] \e + o_{0,N}^*(\e) ) \\
&+ \sum_{k \in \, _{0,N}\XX} ( b_k[0] + b_k[1] \e + \dot{o}_k(\e) ) ( A_{k,N}^*[1] \e + A_{k,N}^*[2] \e^2 + o_{k,N}^*(\e^2) ).
\end{align*}

Again, we can use the product rule and multiple summation rule in order to prove part $\mathbf{(vi)}$, in this case, for $i=N$.

For the case where $i \in \, _{0,N}\XX$, we use relation (\ref{formulaEii}) and parts $\mathbf{(i)}$, $\mathbf{(iv)}$, and $\mathbf{(v)}$ to get
\begin{align*}
E_{ii}(\e) &= b_i[0] + b_i[1] \e + \dot{o}_i(\e) \\
&+ \sum_{k \in \{0,N\}} (b_k[0] + b_k[1] \e + \dot{o}_k(\e)) (A_{k,i}^*[-1] \e^{-1} + A_{k,i}^*[0] + o_{k,i}^*(1)) \\
&+ \sum_{k \in \, _{0,i,N}\XX} (b_k[0] + b_k[1] \e + \dot{o}_k(\e)) (A_{k,i}^*[0] + A_{k,i}^*[1] \e + o_{k,i}^*(\e)).
\end{align*}

Here we can note that the terms corresponding to $k \in \{0,N\}$ are of order $O(\e^{-1})$ while all other terms are of order $O(1)$. By using the product rule and multiple summation rule for Laurent asymptotic expansions, we conclude that the asymptotic relation given in part $\mathbf{(vi)}$ also holds for $i \in \, _{0,N}\XX$.
\end{proof}

The following theorem gives second order asymptotic expansions for stationary and conditional quasi-stationary probabilities. In particular, part $\mathbf{(i)}$ of this theorem shows that there exist limits for stationary probabilities, $\pi_i(0) = \lim_{\e \to 0} \pi_i(\e)$, $i \in \XX$, where $\pi_0(0) > 0$, $\pi_N(0) > 0$, and $\pi_i(0) = 0$ for $i \in \, _{0,N}\XX$. Furthermore, part $\mathbf{(ii)}$ of the theorem shows, in particular, that there exist limits for conditional quasi-stationary probabilities, $\hat{\pi}_i(0) = \lim_{\e \to 0} \hat{\pi}_i(\e)$, $i \in \, _{0,N}\XX$, where $\hat{\pi}_i(0) > 0$, $i \in \, _{0,N}\XX$. The corresponding higher order asymptotic expansions are given in Theorem 5.
\begin{theorem}
Assume that conditions  ${\bf A}$--${\bf C}$, ${\bf D}_1$--${\bf F}_1$, ${\bf G}$  and ${\bf H_3}$ hold. Then:
\begin{enumerate}
\item[$\mathbf{(i)}$] We have the following asymptotic relation for the stationary probabilities $\pi_i(\e)$, $i \in \XX$,
\begin{equation*}
\pi_i(\e) = c_i[\hat{l}_i] \e^{\hat{l}_i} + c_i[\hat{l}_i+1] \e^{\hat{l}_i+1} + o_i(\e^{\hat{l}_i+1}), \ \e \in (0,\e_0],
\end{equation*}
where $\hat{l}_i = I(i \neq 0,N)$, $o_i(\e^{\hat{l}_i+1}) / \e^{\hat{l}_i+1} \to 0$ as $\e \to 0$, and
\begin{equation*}
c_i[\hat{l}_i] = \frac{b_i[0]}{B_{ii}[-\hat{l}_i]} > 0, \quad c_i[\hat{l}_i+1] = \frac{b_i[1] B_{ii}[-\hat{l}_i] - b_i[0] B_{ii}[-\hat{l}_i+1]}{B_{ii}[-\hat{l}_i]^2},
\end{equation*}
where $B_{ii}[-1]$, $i \in \, _{0,N}\XX$, $B_{ii}[0]$, $i \in \XX$, and $B_{ii}[1]$, $i=0,N$, can be computed from the formulas given in Lemma \ref{lemmaH3}.
\item[$\mathbf{(ii)}$] We have the following asymptotic relation for the conditional quasi-stationary probabilities, $\hat{\pi}_i(\e)$, $i \in \, _{0,N}\XX$,
\begin{equation*}
\hat{\pi}_i(\e) = \hat{c}_i[0] + \hat{c}_i[1] \e + \hat{o}_i(\e), \ \e \in (0,\e_0],
\end{equation*}
where $\hat{o}_i(\e) / \e \to 0$ as $\e \to 0$ and
\begin{equation*}
\hat{c}_i[0] = \frac{c_i[1]}{d[1]} > 0, \quad \hat{c}_i[1] = \frac{c_i[2] d[1] - c_i[1] d[2]}{d[1]^2},
\end{equation*}
where $d[l] = \sum_{j \in \, _{0,N}\XX} c_i[l], \ l=1,2$.
\end{enumerate}
\end{theorem}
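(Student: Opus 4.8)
The plan is to mirror the two-step argument already used in the proofs of Theorem~2 and Theorem~\ref{theoremH2}. The first step derives the expansion of the stationary probabilities $\pi_i(\e)$ from the exact identity (\ref{stationary}), $\pi_i(\e) = e_i(\e)/E_{ii}(\e)$, by dividing the expansion of $e_i(\e)$ supplied by Lemma~\ref{lemmaH3}$\mathbf{(i)}$ by the Laurent expansion of $E_{ii}(\e)$ supplied by Lemma~\ref{lemmaH3}$\mathbf{(vi)}$. The second step derives the expansion of the conditional quasi-stationary probabilities $\hat{\pi}_i(\e)$ from (\ref{quasiF3})--(\ref{pihat}) by dividing the expansion of $\pi_i(\e)$ by the expansion of the normalising sum $\sum_{j \in \, _{0,N}\XX}\pi_j(\e)$.

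For part $\mathbf{(i)}$, substituting Lemma~\ref{lemmaH3}$\mathbf{(i)}$ and $\mathbf{(vi)}$ into (\ref{stationary}) gives
\begin{equation*}
\pi_i(\e) = \frac{b_i[0] + b_i[1]\e + \dot{o}_i(\e)}{B_{ii}[\gamma_i+\beta_i-1]\e^{\gamma_i+\beta_i-1} + B_{ii}[\gamma_i+\beta_i]\e^{\gamma_i+\beta_i} + \dot{o}_{ii}(\e^{\gamma_i+\beta_i})}.
\end{equation*}
The decisive bookkeeping step is the identity $\gamma_i+\beta_i = I(i=0)+I(i=N) = 1 - I(i \neq 0,N) = 1-\hat{l}_i$, valid separately in the three regimes $i=0$, $i=N$, and $i \in \, _{0,N}\XX$. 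Thus the denominator is a Laurent expansion with leading power $\e^{-\hat{l}_i}$ and leading coefficient $B_{ii}[-\hat{l}_i] > 0$ (again by Lemma~\ref{lemmaH3}$\mathbf{(vi)}$), so the division rule for Laurent asymptotic expansions from Subsection~6.1 produces an ordinary Taylor expansion of $\pi_i(\e)$ starting at $\e^{\hat{l}_i}$, with $c_i[\hat{l}_i] = b_i[0]/B_{ii}[-\hat{l}_i] > 0$ and $c_i[\hat{l}_i+1] = (b_i[1]B_{ii}[-\hat{l}_i] - b_i[0]B_{ii}[-\hat{l}_i+1])/B_{ii}[-\hat{l}_i]^2$. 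Reading off the cases $\hat{l}_0 = \hat{l}_N = 0$ and $\hat{l}_i = 1$ for $i \in \, _{0,N}\XX$ gives the stated limits $\pi_0(0),\pi_N(0) > 0$ and $\pi_i(0) = 0$ on $_{0,N}\XX$.

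For part $\mathbf{(ii)}$, note that $\hat{l}_i = 1$ for every $i \in \, _{0,N}\XX$, so part $\mathbf{(i)}$ yields $\pi_i(\e) = c_i[1]\e + c_i[2]\e^2 + o_i(\e^2)$ with $c_i[1] > 0$. Applying the multiple summation rule to $\sum_{j \in \, _{0,N}\XX}\pi_j(\e)$ and observing that there is no cancellation at the leading order because $d[1] = \sum_{j \in \, _{0,N}\XX}c_j[1] > 0$, one gets from (\ref{quasiF3})--(\ref{pihat})
\begin{equation*}
\hat{\pi}_i(\e) = \frac{\pi_i(\e)}{\sum_{j \in \, _{0,N}\XX}\pi_j(\e)} = \frac{c_i[1]\e + c_i[2]\e^2 + o_i(\e^2)}{d[1]\e + d[2]\e^2 + \dot{o}(\e^2)}.
\end{equation*}
Cancelling the common factor $\e$ and applying the division rule for ordinary asymptotic expansions gives $\hat{\pi}_i(\e) = \hat{c}_i[0] + \hat{c}_i[1]\e + \hat{o}_i(\e)$ with $\hat{c}_i[0] = c_i[1]/d[1] > 0$ and $\hat{c}_i[1] = (c_i[2]d[1] - c_i[1]d[2])/d[1]^2$. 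I expect the exponent bookkeeping in part $\mathbf{(i)}$ --- tracking the combined shift $\gamma_i+\beta_i-1$ produced by the two asymptotically absorbing states and confirming it equals $-\hat{l}_i$ uniformly --- to be the only point requiring care; it is what guarantees that the Laurent division is applied with the correct orders and that the resulting expansion of $\pi_i(\e)$ genuinely has non-negative powers. Once this is fixed, the rest is a routine application of the operational rules for Laurent asymptotic expansions, exactly as in the proofs of the corresponding theorems under ${\bf H_1}$ and ${\bf H_2}$.
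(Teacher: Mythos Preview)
Your proposal is correct and follows essentially the same approach as the paper's own proof: both substitute Lemma~\ref{lemmaH3}$\mathbf{(i)}$ and $\mathbf{(vi)}$ into $\pi_i(\e)=e_i(\e)/E_{ii}(\e)$, invoke the indicator identity $\gamma_i+\beta_i=1-\hat{l}_i$, and apply the division rule for Laurent expansions, then for part $\mathbf{(ii)}$ combine the resulting expansions via the multiple summation and division rules. Your write-up is slightly more explicit about the positivity of the leading coefficients and the cancellation of the common factor $\e$, but the argument is identical in substance.
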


\begin{proof}
It follows from parts $\mathbf{(i)}$ and $\mathbf{(vi)}$ in Lemma \ref{lemmaH3} that, for $i \in \XX$,
\begin{equation} \label{thmH3quotient1}
\pi_i(\e) = \frac{b_i[0] + b_i[1] \e + \dot{o}_i(\e)}{B_{ii}[\gamma_i+\beta_i-1] \e^{\gamma_i+\beta_i-1} + B_{ii}[\gamma_i+\beta_i] \e^{\gamma_i+\beta_i} + \dot{o}_{ii}(\e^{\gamma_i+\beta_i})}.
\end{equation}

We also have
\begin{equation*}
\gamma_i + \beta_i = I(i=0) + I(i=N) = 1 - I(i \neq 0,N) = 1 - \hat{l}_i.
\end{equation*}

Using this relation for indicator functions and the division rule for Laurent asymptotic expansions in relation (\ref{thmH3quotient1}) we obtain the asymptotic relation given in part $\mathbf{(i)}$.

For the proof of part $\mathbf{(ii)}$, we first use part $\mathbf{(i)}$ for $i \in \, _{0,N}\XX$ to get
\begin{equation*}
\hat{\pi}_i(\e) = \frac{\pi_i(\e)}{\sum_{j \in \, _{0,N}\XX} \pi_j(\e)} = \frac{c_i[1] \e + c_i[2] \e^2 + o_i(\e^2)}{\sum_{j \in \, _{0,N}\XX} ( c_j[1] \e + c_j[2] \e^2 + o_j(\e^2) )},
\end{equation*}
and then we apply the multiple summation rule and the division rule for asymptotic expansions in this relation.
\end{proof} \vspace{3mm}

{\bf 6. Recurrent algorithms for asymptotic expansions for \\ 
\makebox[10.5mm]{} stationary and conditional quasi-stationary distributions} \\

In this section, we generalize results given in Section 5 and  describe general recurrent algorithms for construction of asymptotic expansions for stationary and conditional quasi-stationary distributions for perturbed birth-death semi-Markov processes based on sequential reduction of phase spaces. \\

{\bf 6.1 Laurent asymptotic expansions}. \\   

In this subsection, we present some operational rules for Laurent asymptotic expansions given in Silvestrov, D and Silvestrov S. (2015, 2016) and used in the present  paper for constructions of asymptotic expansions for stationary and conditional quasi-stationary distributions of perturbed semi-Markov processes.

A real-valued function $A(\e)$, defined on an interval $(0, \e_0]$ for some $0 < \e_0 \leq 1$,  is a Laurent asymptotic expansion 
if it can be represented in the following form, $A(\e) = a_{h_A}\e^{h_A} + \cdots + a_{k_A}\e^{k_A} + 
o_A(\e^{k_A}), \e \in (0, \e_0]$, where {\bf (a)} $- \infty < h_A \leq k_A < \infty$ are integers, {\bf
(b)} coefficients $a_{h_A}, \ldots, a_{k_A}$ are real numbers, {\bf (c)} function $o_A(\e^{k_A})/\e^{k_A} \rightarrow 0$ as $\e \rightarrow 0$. Such expansion ais  pivotal if it is known that $a_{h_A} \neq 0$.

The  above paper presents operational rules for Laurent asymptotic expansions. Let shortly formulate some of these rules, in particular, for summation, multiplication, division of  Laurent asymptotic expansions. \vspace{2mm}

\begin{lemma} Let $A(\e) = a_{h_A}\e^{h_A} + \cdots + a_{k_A}\e^{k_A} + o_A(\e^{k_A})$ and $B(\e) = b_{h_B}\e^{h_B} + \cdots + b_{k_B}\e^{k_B} + o_B(\e^{k_B})$ be two pivotal Laurent asymptotic expansions. Then{\rm :} \vspace{2mm}

{\bf (i)} $C(\e) = cA(\e) = c_{h_{C}}\e^{h_{C}} + \cdots + c_{k_C}\e^{k_C} + o_C(\e^{k_C})$, where a constant $c \neq 0$,  is a pivotal Laurent asymptotic expansion and  $h_C = h_A, k_C = k_A,  c_{h_C + r}  = c a_{h_C + r}, r = 0,  \ldots, k_C - h_C$, \vspace{2mm}

{\bf (ii)} $D(\e) = A(\e) + B(\e) = d_{h_{D}}\e^{h_{D}} + \cdots + d_{k_D}\e^{k_D} + o_D(\e^{k_D})$ is a pivotal Laurent asymptotic expansion and $h_D = h_A \wedge h_B, k_D = k_A \wedge k_B, d_{h_D + r} = a_{h_D + r}  + b_{h_D + r}, r = 0, \ldots, k_D - h_D$, where $a_{h_D + r} = 0, r < h_A - h_D, b_{h_D + r} = 0, 
r < h_B - h_D$, \vspace{2mm}
 
{\bf (iii)}   $E(\e) = A(\e) \cdot B(\e) = e_{h_{E}}\e^{h_{E}} + \cdots + e_{k_E}\e^{k_E} + 
o_E(\e^{k_E})$  is a pivotal Laurent asymptotic expansion and $h_E = h_A + h_B, k_E = (k_A + h_B) \wedge (k_B + h_A), e_{h_E + r} = \sum_{l = 0}^r a_{h_A + l}  \cdot b_{h_B + r - l}, r = 0, \ldots, k_E - h_E$,  \vspace{2mm}

{\bf (iv)}  $F(\e) = A(\e)/B(\e) = f_{h_{F}}\e^{h_{F}} + \cdots + f_{k_F}\e^{k_F} + o_F(\e^{k_F})$ ia a pivotal Laurent asymptotic expansion and   $h_F = h_A - h_B, \, k_F = (k_A - h_B)  \wedge (k_B - 2 h_B + h_A)$, $f_{h_F + r} = \frac{1}{b_{h_B}}( a_{h_A + r} +  \sum_{l = 1}^r b_{h_B + l}  \cdot f_{h_F + r - l}), r = 0, \ldots, k_F - h_F$. 
\end{lemma}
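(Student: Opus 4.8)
The plan is to handle the four parts in increasing order of difficulty, reducing each to elementary manipulations of finite sums together with the defining property that a remainder $o(\e^k)$ satisfies $o(\e^k)/\e^k \to 0$ as $\e \to 0$. Parts (i) and (ii) are essentially immediate. For (i), multiply the representation of $A(\e)$ by the nonzero constant $c$ term by term; the remainder becomes $c\,o_A(\e^{k_A})$, still of order $o(\e^{k_A})$, and the leading coefficient $c a_{h_A} \neq 0$, so $h_C = h_A$ and $k_C = k_A$. For (ii), rewrite both expansions over the common lowest power $h_D = h_A \wedge h_B$ (padding the later-starting expansion with zero coefficients) and over the common highest retained power; adding the polynomial parts gives $d_{h_D+r} = a_{h_D+r} + b_{h_D+r}$, while the sum of the two remainders is controlled by $o(\e^{k_D})$ with $k_D = k_A \wedge k_B$, since past that index at least one of the two expansions contributes only its remainder. (In the borderline case $h_A = h_B$ with cancelling leading coefficients, $h_D$ is to be read as the first index with nonzero coefficient, which is the standard reading of the rule.)

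For (iii), the point is that $A(\e)B(\e)$ is a sum of pairwise products of terms, and grouping by total power of $\e$ produces the discrete convolution $e_{h_E+r} = \sum_{l=0}^r a_{h_A+l} b_{h_B+r-l}$ with $h_E = h_A + h_B$. The delicate step, which I would carry out explicitly, is bounding the leftover: a genuine term of one expansion, of smallest order $\e^h$, times the remainder of the other, of order $o(\e^k)$, is $o(\e^{h+k})$, and the product of the two remainders is of strictly higher order still; hence the guaranteed accuracy is exactly $k_E = (k_A + h_B) \wedge (k_B + h_A)$, and $e_{h_E} = a_{h_A} b_{h_B} \neq 0$ gives pivotality.

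For (iv) I would proceed in two steps. First, since $B$ is pivotal, $B(\e) = b_{h_B}\e^{h_B}\bigl(1 + r_B(\e)\bigr)$ with $r_B(\e) \to 0$; hence $B(\e) \neq 0$ for all small $\e$, the ratio $\e^{h_B}/B(\e)$ tends to $1/b_{h_B}$, and $F(\e) = A(\e)/B(\e)$ is well defined near $0$ with $h_F = h_A - h_B$. Second, instead of expanding a geometric series directly, I would pin down the coefficients of $F$ by imposing the identity $A(\e) = B(\e)F(\e)$ and equating coefficients of equal powers of $\e$ by means of the product rule from part (iii); solving successively for $f_{h_F}, f_{h_F+1}, \ldots$ yields precisely the stated recursion $f_{h_F+r} = \frac{1}{b_{h_B}}\bigl(a_{h_A+r} + \sum_{l=1}^r b_{h_B+l} f_{h_F+r-l}\bigr)$. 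Transporting the remainder through this identity then shows that the best guaranteed order of the expansion of $F$ is $k_F = (k_A - h_B) \wedge (k_B - 2h_B + h_A)$.

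The main obstacle is the bookkeeping of the remainder terms in parts (iii) and (iv): one must check that no cross term of order below the claimed $k_E$ (resp.\ $k_F$) is concealed inside an $o(\cdot)$ symbol, and, in the division rule, that the reliable part of the expansion of $1/B(\e)$ stops exactly at the advertised index and not earlier or later. Everything else is routine algebra with finitely many terms.
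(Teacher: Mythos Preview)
The paper does not prove this lemma; it is quoted from Silvestrov, D.\ and Silvestrov, S.\ (2015, 2016) and stated in Section~6.1 purely as a tool, with no proof environment or argument following it. There is therefore nothing in the paper to compare your proposal against.

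Your sketch is the standard argument and is essentially correct. Two remarks are worth making. First, your observation about part~(ii) is exactly right: as literally stated, the sum need not be pivotal when $h_A=h_B$ and $a_{h_A}+b_{h_B}=0$; the intended reading is that $h_D$ is then taken to be the first index with nonzero coefficient (or, in the applications in this paper, this cancellation simply does not arise). Second, your method for part~(iv) --- write $A(\e)=B(\e)F(\e)$ and equate coefficients via the product rule --- is the right one, but if you carry it out you will obtain
\[
f_{h_F+r}=\frac{1}{b_{h_B}}\Bigl(a_{h_A+r}-\sum_{l=1}^{r}b_{h_B+l}\,f_{h_F+r-l}\Bigr),
\]
with a minus sign rather than the plus sign printed in the lemma; this is a typographical slip in the statement, not a flaw in your approach. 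The remainder bookkeeping you outline for (iii) and (iv), in particular the identification of $k_E=(k_A+h_B)\wedge(k_B+h_A)$ and $k_F=(k_A-h_B)\wedge(k_B-2h_B+h_A)$ as the sharp guaranteed orders, is correct.
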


The following lemma presents useful multiple generalizations of summation and multiplication rules  given in Lemma 4. 
\begin{lemma} Let $A_i(\e) = a_{i, h_{A_i}}\e^{h_A} + \cdots + a_{i, k_{A_i}}\e^{k_{A_i}} + o_{A_i}(\e^{k_{A_i}}), i = 1, \ldots, m$  be  pivotal Laurent asymptotic expansions. Then{\rm :} \vspace{2mm}
 
 {\bf (i)} $D(\e) = \sum_{i = 1}^m A_i(\e) = d_{h_{D}}\e^{h_{D}} + \cdots + d_{k_{D}}\e^{k_{D}} + o_{D}(\e^{k_{D}})$ is a pivotal Laurent asymptotic expansion and  $h_{D} = \min_{1 \leq l \leq m} h_{A_l},  \, k_{D} =  \min_{1 \leq l \leq m} k_{A_l}$, $d_{h_{D} + l} =  a_{1, h_{D} + l} + \cdots + a_{m, h_{D} + l}, \  
 l = 0, \ldots, k_{D}  - h_{D}$, where  $a_{i, h_{D} + l} = 0$ for $0 \leq l < h_{A_i} - h_{D}, i = 1, \ldots, m$, \vspace{2mm}

 {\bf (ii)}   $E(\e) = \prod_{i = 1}^m A_i(\e)  = e_{h_{E}}\e^{h_{E}} + \cdots + e_{k_E}\e^{k_E} + 
o_E(\e^{k_E})$  is a pivotal Laurent asymptotic expansion and 
$h_{E} =  \sum_{l = 1}^m h_{A_l}, \,  k_{E} =  \min_{1 \leq l \leq m}(k_{A_l} + \sum_{1 \leq r \leq m, r \neq l} h_{A_r})$, 
 $e_{h_{E} + l} =  \sum_{l_1 + \cdots + l_m = l, 0 \leq l_i \leq k_{A_i}  - h_{A_i}, i = 1, \ldots, m}  \, \prod_{1 \leq i \leq m} a_{i, h_{A_{i}} + l_i}$, \,  $l = 0, \ldots, k_{E}  - h_{E}$. 
\end{lemma}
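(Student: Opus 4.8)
The plan is to prove both parts by induction on $m$, using Lemma 4 as the two-term case together with the associativity of ordinary summation and of the discrete (Cauchy) convolution. For $m=1$ both statements are trivial. For $m=2$, part $\mathbf{(i)}$ is exactly Lemma 4 $\mathbf{(ii)}$ and part $\mathbf{(ii)}$ is exactly Lemma 4 $\mathbf{(iii)}$, so the base of the induction is already available, including the control of the remainder terms $o_D(\e^{k_D})$ and $o_E(\e^{k_E})$.

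For the inductive step of part $\mathbf{(i)}$ I would write $D(\e) = D'(\e) + A_m(\e)$ with $D'(\e) = \sum_{i=1}^{m-1} A_i(\e)$, which by the induction hypothesis is a pivotal Laurent asymptotic expansion with $h_{D'} = \min_{1 \leq l \leq m-1} h_{A_l}$, $k_{D'} = \min_{1 \leq l \leq m-1} k_{A_l}$ and coefficients $d'_{h_{D'}+l} = \sum_{i=1}^{m-1} a_{i, h_{D'}+l}$ (under the usual convention that $a_{i,\cdot}=0$ below order $h_{A_i}$). Applying Lemma 4 $\mathbf{(ii)}$ to $D'(\e)$ and $A_m(\e)$ then yields $h_D = h_{D'} \wedge h_{A_m} = \min_{1\leq l\leq m} h_{A_l}$, $k_D = k_{D'} \wedge k_{A_m} = \min_{1\leq l\leq m} k_{A_l}$, and $d_{h_D+l} = d'_{h_D+l} + a_{m, h_D+l} = \sum_{i=1}^m a_{i, h_D+l}$, exactly the asserted formulas, with the remainder estimate inherited from Lemma 4. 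One caveat, already present in Lemma 4 $\mathbf{(ii)}$, is that pivotality of a sum presupposes that the leading coefficients do not cancel, i.e.\ $d_{h_D}\neq 0$; in every application of this lemma in Sections 5---6 the relevant leading coefficients are strictly positive, so this holds automatically and need not be assumed separately.

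For part $\mathbf{(ii)}$ I would write $E(\e) = E'(\e)\cdot A_m(\e)$ with $E'(\e) = \prod_{i=1}^{m-1} A_i(\e)$; by the induction hypothesis $E'(\e)$ is pivotal with $h_{E'} = \sum_{l=1}^{m-1} h_{A_l}$ and $k_{E'} = \min_{1\leq l\leq m-1}(k_{A_l} + \sum_{1\leq r\leq m-1,\, r\neq l} h_{A_r})$. Lemma 4 $\mathbf{(iii)}$ gives $h_E = h_{E'} + h_{A_m} = \sum_{l=1}^m h_{A_l}$ and $k_E = (k_{E'} + h_{A_m}) \wedge (k_{A_m} + h_{E'})$; pushing the constant $h_{A_m}$ inside the minimum defining $k_{E'}$ in the first term and using $h_{E'} = \sum_{r\neq m} h_{A_r}$ in the second shows that $k_E = \min_{1\leq l\leq m}(k_{A_l} + \sum_{1\leq r\leq m,\, r\neq l} h_{A_r})$, as claimed. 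Here pivotality is immediate, since the leading coefficient of $E(\e)$ equals $\prod_{i=1}^m a_{i, h_{A_i}} \neq 0$. For the coefficients, the Cauchy product $e_{h_E+l} = \sum_{p=0}^{l} e'_{h_{E'}+p}\, a_{m, h_{A_m}+l-p}$ of Lemma 4 $\mathbf{(iii)}$, combined with the induction hypothesis $e'_{h_{E'}+p} = \sum_{l_1+\cdots+l_{m-1}=p} \prod_{i=1}^{m-1} a_{i, h_{A_i}+l_i}$, collapses by associativity of convolution into the single multi-index sum $\sum_{l_1+\cdots+l_m=l} \prod_{i=1}^m a_{i, h_{A_i}+l_i}$, with the range restrictions $0\leq l_i\leq k_{A_i}-h_{A_i}$ simply recording where the known coefficients of each $A_i$ stop.

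I do not expect any conceptual difficulty: once the $m=2$ rules of Lemma 4 are in hand, everything reduces to bookkeeping. The step requiring the most care is tracking the truncation order $k_E$ of the product through the induction and verifying that the nested convolutions assemble into the stated multi-index sum over the correct index range; this is the ``main obstacle,'' but it is notational rather than substantive.
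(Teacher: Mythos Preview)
Your induction argument is correct and is the natural way to derive the $m$-fold rules from the two-term rules of Lemma~4; the computations of $h_D,k_D,h_E,k_E$ and the coefficient formulas are carried out properly, and your remark that pivotality of the sum requires $d_{h_D}\neq 0$ (guaranteed in all applications by positivity) is accurate and worth keeping.

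For comparison: the paper itself does not give a proof of this lemma. It is stated in Subsection~6.1 as one of the operational rules for Laurent asymptotic expansions imported from Silvestrov,~D.\ and Silvestrov,~S.\ (2015, 2016), alongside Lemma~4, and is then used as a black box in Sections~5 and~6. So there is no ``paper's proof'' to compare against; your inductive reduction to Lemma~4 is exactly the intended justification, just made explicit.
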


{\bf 6.2. Laurent asymptotic expansions for  transition \mbox{characteristics}  \\
\makebox[13.5mm]{} of perturbed  birth-death semi-Markov processes  based on \\ 
\makebox[13.5mm]{} sequential reduction of phase spaces} \\

In this subsection, we show how  recurrent algorithms for constructions of Laurent asymptotic expansions for stationary and conditional quasi-stationary distributions of perturbed semi-Markov processes developed in Silvestrov, D and Silvestrov S. (2015, 2016) and shortly presented in Lemmas 4 and 5 can be applied to the model of perturbed birth-death semi-Markov processes, which is studied in the present paper.

The operational rules for Laurent asymptotic expansions formulated in Lemmas  4 and 5 can be applied to recurrent formulas for computing transition probabilities and expectations of transition times  for reduced perturbed birth-death semi-Markov processes (\ref{takaskao}) -- (\ref{noply}).

In this context, it is convenient to   re-write the  asymptotic expansions penetrating perturbation conditions ${\bf D}_L$ and ${\bf E}_L$ in the equivalent pivotal form  $p_{i, \pm}(\e) =  \sum_{l = l_{i, \pm}}^{L  + l_{i, \pm}} a_{i, \pm}[l] \e^l + o_{i, \pm}(\e^{L + l_{i, \pm}})$ and $e_{i, \pm}(\e) =  \sum_{l = l_{i, \pm}}^{L  + l_{i, \pm}} b_{i, \pm}[l] \e^l + \dot{o}_{i, \pm}(\e^{L + l_{i, \pm}}), i \in \XX$. 

These asymptotic expansions  play the role of boundary conditions, which give explicit formulas for parameters  and coefficients of the asymptotic expansions 
$_{\langle 0, N \rangle}p_{i, \pm}(\e) = p_{i, \pm}(\e) =  \sum_{l = l_{i, \pm}}^{L  + l_{i, \pm}} \, _{\langle 0, N \rangle}a_{i, \pm}[l] \e^l$ $+ \, _{\langle 0, N \rangle}o_{i, \pm}(\e^{L + l_{i, \pm}})$ and $_{\langle 0, N \rangle}e_{i, \pm}(\e) \ = \ e_{i, \pm}(\e) \  =  \ \sum_{l = l_{i, \pm}}^{L  + l_{i, \pm}} \, _{\langle 0, N \rangle}b_{i, \pm}[l] \e^l \ +$ $_{\langle 0, N \rangle}\dot{o}_{i, \pm}(\e^{L + l_{i, \pm}})$, for transition characteristics of the initial semi-Markov processes $ _{\langle 0, N \rangle}\eta^{(\e)}(t) = \eta^{(\e)}(t)$, namely, 
$_{\langle 0, N \rangle}a_{i, \pm}[l] = a_{i, \pm}[l], \  _{\langle 0, N \rangle}b_{i, \pm}[l] = b_{i, \pm}[l], \ l = l_{i, \pm}, \ldots, L + l_{i, \pm}, i \in \XX$.

All recurrent formulas in relations  (\ref{takaskao}) -- (\ref{noply}), except  the formulas for expectations $_{\langle k, r \rangle}e_{k, -}(\e), \, _{\langle k, r \rangle}e_{r, +}(\e), 1 \leq k \leq r 
\leq N$ given, respectively,  in the third lines of relations (\ref{takaso}) and (\ref{akaso}), have the form of simple identities of the type $_{\langle k, r \rangle}e_{i, \pm}(\e)  = \, _{\langle k-1, r \rangle}e_{i, \pm}(\e)$ and $_{\langle k, r \rangle}e_{i, \pm}(\e)  = \, _{\langle k, r +1 \rangle}e_{i, \pm}(\e)$. This   implies equalities for the corresponding parameters and coefficients of  asymptotic expansions for transition characteristics on the left and right hand sides of the corresponding identities. 

As far as the identities for expectations $_{\langle k, r \rangle}e_{k, -}(\e), \, _{\langle k, r \rangle}e_{r, +}(\e), 1 \leq k \leq r 
\leq N$ are concerned, the operational rules for Laurent asymptotic expansions mentioned above should be applied to the expressions on the right hand side of the corresponding identities. 
For example, let us describe the recurrent algorithm for computing asymptotic expansions for expectations $_{\langle k, r \rangle}e_{k, -}(\e), 1 \leq k \leq r \leq N$.

First, the division rule {\bf (iv)} given in Lemma 4  should be applied to the quotients $\frac{_{\langle k-1, r \rangle}p_{k, -}(\e)}{_{\langle k-1, r \rangle}p_{k-1, +}(\e)}$, $1 \leq k \leq r \leq N$. Second summation rule {\bf (ii)} given in Lemma 4    should be applied to sums $_{\langle k-1, r \rangle}e_{k-1}(\e) = \,  _{\langle k-1, r \rangle}e_{k-1, +}(\e)  + \, _{\langle k-1, r \rangle}e_{k-1, -}(\e), 1 \leq k \leq N$. Third, the multiplication rule {\bf (iii)} given in Lemma 4    should be applied  to the products  $_{\langle k-1, r \rangle}e_{k-1}(\e) \cdot \frac{_{\langle k-1, r \rangle}p_{k, -}(\e)}{_{\langle k-1, r \rangle}p_{k-1, +}(\e)},1 \leq k \leq r \leq N$. Fourth, the summation rule {\bf (ii)} given in Lemma 4   should be applied to the sum  $_{\langle k, r \rangle}e_{k, -}(\e)  =  \, _{\langle k-1, r \rangle}e_{k, -}(\e)   + \, _{\langle k-1, r \rangle}e_{k-1}(\e) \cdot \frac{_{\langle k-1, r \rangle}p_{k, -}(\e)}{_{\langle k-1, r \rangle}p_{k-1, +}(\e)}$,  $1 \leq k \leq r \leq N$.
This will yield the corresponding asymptotic expansion and recurrent formulas for parameters and coefficients of the Laurent asymptotic expansions for 
expectations $_{\langle k, r \rangle}e_{k, -}(\e), 1 \leq k \leq r \leq N$.

The asymptotic expansions, which are based on the identities  $_{\langle k, r \rangle}e_{r, +}(\e)  =$  $_{\langle k, r +1 \rangle}e_{r, +}(\e)  + \, _{\langle k, r +1 \rangle}e_{r+1}(\e) \cdot \frac{_{\langle k, r +1 \rangle}p_{r, +}(\e)}{_{\langle k, r +1 \rangle}p_{r+1, -}(\e)}, 0 \leq k \leq r \leq N-1$,  can be obtained in analogous way.

The Laurent asymptotic expansion for the expectations of return times $E_{ii}(\e) =  \, _{\langle i, i \rangle}e_{i}(\e) =  \, _{\langle i, i \rangle}e_{i, +}(\e) + \, _{\langle i, i \rangle}e_{i, -}(\e), i \in \XX$  can be obtained  by application of the summation rule {\bf (ii)} given in Lemma 4   to the above sums $_{\langle i, i \rangle}e_{i, +}(\e) + \, _{\langle i, i \rangle}e_{i, -}(\e)$.

Alternatively, the asymptotic expansions for the expectations of return times $E_{ii}(\e)$ can be obtained, first, by application of the multiple multiplication rule given in Lemma 5 to the products representing numerators and denominators in fractions penetrating expressions for these expectations given in relation
(\ref{opada}), second,  the division rule given in Lemma 4  to these fractions and, third, the
multiple summation rule given in Lemma 5  to the sums of the above fractions representing expectations $E_{ii}(\e)$
in relation (\ref{opada}).

Both algorithms give the same  Laurent asymptotic expansions for expectations $E_{ii}(\e)$. The only difference is in forms of recurrent formulas for coefficients in  the corresponding expansions. \\
\vspace{1mm} 

{\bf 6.3. Laurent asymptotic expansions for stationary and \\ 
\makebox[14mm]{} conditional quasi-stationary distributions of  \\ 
\makebox[14mm]{} perturbed  birth-death semi-Markov processes} \\ 

At the final step, the asymptotic expansion for stationary probabilities $\pi_i(\e) = \frac{e_i(\e)}{E_{ii}(\e)}, \ i \in \XX$ and then conditional quasi-stationary probabilities $\tilde{\pi}_i(\e) = \frac{\pi_i(\e)}{1 - \pi_0(\e)}, \ i \in \, _0\XX$ and $\hat{\pi}_i(\e) = \frac{\pi_i(\e)}{1 - \pi_0(\e) -  \pi_N(\e)}, \ i \in \, _{0, N}\XX$ can be obtained by application of the division rule  {\bf (v)} given in Lemma 4  to the quotients defining these quantities.

Explicit expressions for coefficients of  the Laurent asymptotic expansions for  the expectations of return times $E_{ii}(\e)$ and then for stationary and conditional quasi-stationary distributions can be obtained using recurrent formulas for coefficients of  Laurent asymptotic expansions for transition characteristics of perturbed reduced semi-Markov processes. This can be done in the same way, as the explicit expressions for expectations $E_{ii}(\e)$ itself have been obtained with the use of recurrent formulas (\ref{emsemitalop}) -- (\ref{opadada})  for  transition characteristics of perturbed reduced semi-Markov processes.  
 
The corresponding explicit recurrent formulas for coefficients of the corresponding asymptotic expansions are different for the cases, where 
condition ${\bf H_1}$, ${\bf H_2}$ or ${\bf H_3}$ hold. We omit the long technical calculations and just formulate the corresponding final 
result.  
\begin{theorem} Let conditions   ${\bf A}$ --  ${\bf C}$, ${\bf D}_L$ -- ${\bf F}_L$ and ${\bf G}$ hold. In this case{\rm :} \vspace{1mm}

{\bf (i)} Condition ${\bf H_1}$ implies that  the following asymptotic expansions for stationary probabilities $\pi_i(\e), i \in \XX$ take place,
 \begin{equation}\label{statar}
 \pi_i(\e) =   \sum_{l = 0}^{L} c_{i}[l]\e^l + o_{i}(\e^{L}), \, \e \in (0, \e_0], 
 \end{equation}
 where{\rm :}   {\bf (b)}  $|c_{i}[l]|  < \infty, 0 \leq l \leq L, i \in \XX;$   
 {\bf (b)} $\pi_i(0) =  c_{i}[0]  > 0, i \in \XX$ and $\sum_{i \in \XX} \pi_{i}(0)  = 1;$ {\bf (c)}  $\sum_{i \in \XX} c_{i}[l]  = 0, 1 \leq l \leq L;$ {\bf (e)} $ o_{i}(\e^{L })/\e^{L} \to 0$ as $\e \to 0$, for $i \in \XX$. \vspace{1mm}
 
{\bf (ii)} Condition  ${\bf H_2}$ implies that  the following asymptotic expansions for for stationary probabilities $\pi_i(\e), i \in \XX$ take place,
 \begin{equation}\label{statara}
 \pi_i(\e) =   \sum_{l = \tilde{l}_i}^{L +  \tilde{l}_i} c_{i}[l]\e^l + o_{i}(\e^{L +  \tilde{l}_i}), \, \e \in (0, \e_0], 
 \end{equation}
 where: {\bf (a)} $\tilde{l}_i = I(i \neq 0), i \in \XX;$   {\bf (b)}  $|c_{i}[l]|  < \infty, \tilde{l}_i  \leq l \leq L + \tilde{l}_i , i \in \XX;$   
 {\bf (c)}  $\pi_0(0) =  c_{0}[0]  = 1$ and $\pi_i(0) =  c_{0}[0] = 0, c_{0}[1] > 0, i \in \, _0\XX;$ {\bf (d)}   $\sum_{i \in \XX} c_{i}[l]  = 0, 1 \leq l \leq L;$ {\bf (e)} $ o_{i}(\e^{L  + \tilde{l}_i})/\e^{L+  \tilde{l}_i} \to 0$ 
  as $\e \to 0$, for $i \in \XX$. \vspace{1mm}

{\bf (iii)} Condition  ${\bf H_2}$ implies that  the following asymptotic expansions for conditional quasi-stationary probabilities $\tilde{\pi}_i(\e), i \in \, _0\XX$ takes place,
 \begin{equation}\label{statarb}
\tilde{\pi}_i(\e) =   \sum_{l = 0}^{L} \tilde{c}_{i}[l]\e^l + \tilde{o}_{i}(\e^{L}), \, \e \in (0, \e_0], 
 \end{equation}
 where: {\bf (a)} $| \tilde{c}_{i}[l]|  < \infty, 0 \leq l \leq L, i \in \, _0\XX$;  {\bf (b)} $\tilde{\pi}_i(0)  =  \tilde{c}_{i}[0]  > 0, i \in \, _0\XX$ and  $\sum_{i \in \, _0\XX}  \tilde{\pi}_{i}(0)  = 1$;   {\bf (c)}  $\sum_{i \in \, _0\XX}  \tilde{c}_{i}[l]  = 0, 1 \leq l \leq L$; {\bf (d)} $ \tilde{o}_{i}(\e^{L})/\e^{L} \to 0$ as $\e \to 0$, for $i \in \, _0\XX$. \vspace{1mm}
 
 {\bf (iv)} Condition  ${\bf H_3}$ implies that  the following asymptotic expansions for for stationary probabilities $\pi_i(\e), i \in \XX$ take place,
 \begin{equation}\label{statarc}
 \pi_i(\e) =   \sum_{l = \tilde{l}_i}^{L +  \hat{l}_i} c_{i}[l]\e^l + o_{i}(\e^{L +  \tilde{l}_i}), \, \e \in (0, \e_0], 
 \end{equation}
 where: {\bf (a)} $\hat{l}_i = I(i \neq 0, N), i \in \XX;$   {\bf (b)}  $|c_{i}[l]|  < \infty, \hat{l}_i  \leq l \leq L + \hat{l}_i , i \in \XX;$   
 {\bf (c)}  $\pi_i(0) =  c_{i}[0]  > 0, i = 0, N$, $\pi_i(0) =  c_{i}[0]  = 0, c_{i}[1] > 0,   i \in \, _{0, N}\XX$ and  $\pi_0(0) + \pi_1(0) = 1;$ {\bf (d)}   $\sum_{i \in \XX} c_{i}[l]  = 0, 1 \leq l \leq L;$ {\bf (e)} $ o_{i}(\e^{L  + \hat{l}_i})/\e^{L+  \hat{l}_i} \to 0$ as $\e \to 0$, for $i \in \XX$. \vspace{1mm}

 {\bf (v)} Condition  ${\bf H_3}$  implies that  the following asymptotic expansions for conditional quasi-stationary probabilities 
 $\hat{\pi}_i(\e), i \in \, _{0, N}\XX$,
 \begin{equation}\label{statard}
\hat{\pi}_i(\e) =   \sum_{l = 0}^{L} \hat{c}_{i}[l]\e^l + \hat{o}_{i}(\e^{L}), \, \e \in (0, \e_0], 
 \end{equation}
 where: {\bf (a)} $| \hat{c}_{i}[l]|  < \infty, 0 \leq l \leq L, i \in \, _{0, N}\XX$;  {\bf (b)} $ \hat{\pi}_i(0)  =  \hat{c}_{i}[0]  > 0, i \in  \, _{0, N}\XX$ and  
 $\sum_{i \in  \, _{0, N}\XX}  \hat{\pi}_{i}(0)  = 1$;  
  {\bf (c)}  $\sum_{i \in \, _{0, N}\XX}  \hat{c}_{i}[l]  = 0, 1 \leq l \leq L$; {\bf (d)} $ \hat{o}_{i}(\e^{L})/\e^{L} \to 0$ as $\e \to 0$, for $i \in \, _{0, N}\XX$.
  \end{theorem}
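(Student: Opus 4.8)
The plan is to establish all five expansions by propagating pivotal Laurent asymptotic expansions through the phase-space reduction machinery of Section 4, using the operational rules of Lemma 4 and Lemma 5. First I would rewrite conditions ${\bf D}_L$ and ${\bf E}_L$ in the pivotal form indicated in Subsection 6.2, so that $p_{i, \pm}(\e)$ and $e_{i, \pm}(\e)$ are, for every $i \in \XX$, pivotal Laurent asymptotic expansions of length $L$ with lowest powers $l_{i, \pm}$; these serve as the boundary data $_{\langle 0, N \rangle}a_{i, \pm}[l] = a_{i, \pm}[l]$, $_{\langle 0, N \rangle}b_{i, \pm}[l] = b_{i, \pm}[l]$ for the reduction recursion (\ref{takaskao})--(\ref{noply}).

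Next I would argue by induction on the reduction steps. Almost all of the relations (\ref{takaskao})--(\ref{noply}) are identities that transport expansions unchanged, so the only recurrences requiring work are those for $_{\langle k, r \rangle}e_{k, -}(\e)$ and $_{\langle k, r \rangle}e_{r, +}(\e)$. For these one applies, exactly as sketched in Subsection 6.2, the division rule (iv) of Lemma 4 to the quotient $_{\langle k-1, r \rangle}p_{k, -}(\e)/{}_{\langle k-1, r \rangle}p_{k-1, +}(\e)$ (resp.\ its mirror image), then the summation rule (ii) and multiplication rule (iii) of Lemma 4, and finally the summation rule (ii) once more. The induction hypothesis records at each stage the lowest power, the length, and the sign of the leading coefficient of every expansion; the latter stays positive because all quantities involved are strictly positive for $\e \in (0, \e_0]$ by condition ${\bf A}$. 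Under ${\bf H_1}$ each quotient remains of order $O(1)$; under ${\bf H_2}$, because $p_{0, +}(\e)$ is of order $\e$, the quotients attached to the left boundary acquire a factor $\e^{-1}$; under ${\bf H_3}$ the same happens at both ends, since $p_{N, -}(\e)$ is also of order $\e$. Taking $k = r = i$ gives, via Theorem 1 and relation (\ref{hitrerva}), the Laurent expansion of $E_{ii}(\e) = {}_{\langle i, i \rangle}e_{i}(\e)$; equivalently, one may obtain it directly from the closed formula (\ref{opada}), applying the multiple multiplication rule of Lemma 5 to the $\Gamma$-type numerators and denominators, the division rule of Lemma 4 to the resulting fractions, and the multiple summation rule of Lemma 5 to the outer sum. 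In either case $E_{ii}(\e)$ is a pivotal Laurent expansion of length $L$ whose lowest power is $0$ under ${\bf H_1}$, equals $\gamma_i - 1$ under ${\bf H_2}$, and equals $\gamma_i + \beta_i - 1$ under ${\bf H_3}$ (with $\gamma_i = I(i=0)$, $\beta_i = I(i=N)$), and whose leading coefficient is positive because $E_{ii}(\e) > 0$ by (\ref{hitaexas}). This is the length-$L$ analogue of part $\mathbf{(v)}$ of Lemma \ref{lemmaH1}, resp.\ part $\mathbf{(vi)}$ of Lemma \ref{lemmaH2} and Lemma \ref{lemmaH3}.

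With the expansion of $E_{ii}(\e)$ available, the expansions (\ref{statar}), (\ref{statara}), (\ref{statarc}) of $\pi_i(\e) = e_i(\e)/E_{ii}(\e)$ follow from the division rule (iv) of Lemma 4, noting that $e_i(\e) = e_{i, -}(\e) + e_{i, +}(\e)$ is of order $O(1)$ with positive leading coefficient $b_i[0] = b_{i, -}[0] + b_{i, +}[0]$ by condition ${\bf G}$ and the summation rule. This produces the leading powers $\tilde{l}_i = I(i \neq 0)$, resp.\ $\hat{l}_i = I(i \neq 0, N)$, and the positivity of the leading coefficients $c_i[\tilde{l}_i]$, $c_i[\hat{l}_i]$. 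The normalization $\sum_{i \in \XX}\pi_i(\e) = 1$, valid for all $\e \in (0, \e_0]$, together with uniqueness of the coefficients of a Laurent asymptotic expansion, forces $\sum_{i \in \XX}c_i[l]$ to match the coefficients of the constant function $1$, which gives the asserted constraints $\sum_i c_i[0] = 1$ and $\sum_i c_i[l] = 0$, $1 \leq l \leq L$, in each case (in the ${\bf H_2}$ and ${\bf H_3}$ cases this is consistent because the internal-state expansions start at order $\e$ while $\pi_0(\e)$, resp.\ $\pi_0(\e)$ and $\pi_N(\e)$, start at order $1$). Finally, for the conditional quasi-stationary probabilities one writes $\tilde{\pi}_i(\e) = \pi_i(\e)/\sum_{j \in {}_0\XX}\pi_j(\e)$ and $\hat{\pi}_i(\e) = \pi_i(\e)/\sum_{j \in {}_{0,N}\XX}\pi_j(\e)$; by the multiple summation rule the denominator is a pivotal expansion of order $\e$ with leading coefficient $d[1] = \sum_j c_j[1] > 0$, so dividing by it cancels the factor $\e$ and yields the Taylor expansions (\ref{statarb}), (\ref{statard}) of length $L$ with $\tilde{c}_i[0] = c_i[1]/d[1] > 0$ and $\hat{c}_i[0] = c_i[1]/d[1] > 0$, while normalization again gives $\sum_i \tilde{c}_i[0] = 1$, $\sum_i \tilde{c}_i[l] = 0$ and likewise for the hatted coefficients.

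The step I expect to be the main obstacle is the bookkeeping of lowest powers under ${\bf H_2}$ and ${\bf H_3}$: one must verify that no extra cancellation lowers the order of $E_{ii}(\e)$ below $\gamma_i - 1$, resp.\ $\gamma_i + \beta_i - 1$, i.e.\ that the corresponding leading coefficient is genuinely nonzero. This is precisely where conditions ${\bf F}_L$ and ${\bf G}$ and the positivity requirements built into ${\bf D}_L$ and ${\bf E}_L$ enter, and where the argument, though conceptually routine, becomes a lengthy induction --- which is why the statement is given here without the full computation. A secondary point needing care is the preservation of length: one must check that each application of the operational rules of Lemma 4 and Lemma 5 keeps the common length $L$, which relies on the deliberate equalization of all expansions in ${\bf D}_L$, ${\bf E}_L$ to length $L$ via the parameters $l_{i, \pm}$, so that no intermediate truncation shortens the final expansions.
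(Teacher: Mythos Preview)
Your proposal is correct and follows essentially the same approach as the paper. The paper's own ``proof'' of this theorem is in fact only a sketch: it states that the expansions and their coefficients are obtained by applying the operational rules of Lemmas~4 and~5 to the recurrent formulas (\ref{takaskao})--(\ref{noply}) for the reduced semi-Markov processes (or, alternatively, to the closed formula (\ref{opada})), followed by the quotient formulas for $\pi_i(\e)$, $\tilde{\pi}_i(\e)$, $\hat{\pi}_i(\e)$, and explicitly omits the long technical calculations --- exactly the strategy you lay out, including both alternative routes, the identification of the correct lowest powers $\gamma_i-1$ and $\gamma_i+\beta_i-1$, and the length-preservation and positivity bookkeeping you flag as the main obstacles.
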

  
Asymptotic expansions (\ref{statar}) -- (\ref{statard}) and explicit recurrent formulas for coefficients in these expansions  can be obtained by application of the recurrent algorithm presented in Subsection 6.1 and  based on application operational rules for Laurent asymptotic expansions given in Lemmas 4 and 5 to transition characteristics of reduced birth-death semi-Markov processes given by recurrent formulas  (\ref{takaskao}) -- (\ref{noply}) and quotient formulas for stationary and quasi-stationary probabilities given above.  

We omit the corresponding long technical calculations and just mention that the corresponding  explicit expressions for the   first and the second  coefficients in these expansions are given in  Section 5. \\

{\bf 7. Examples} \\

In this section, the results of the present paper are illustrated by numerical examples for some of the perturbed models of birth-death-type discussed in Section 3.

Let us first note that each model presented in Section 3 is defined in terms of intensities for a continuous time Markov chain and the perturbation scenarios considered give intensities which are linear functions of the perturbation parameter, that is,
\begin{equation} \label{lipmLinear}
\lambda_{i,\pm}(\e) = g_{i,\pm}[0] + g_{i,\pm}[1] \e, \ i \in \XX,
\end{equation}
where the coefficients $g_{i,\pm}[l]$ depend on the model under consideration. Consequently, the higher order ($l\ge 2$) terms in (\ref{lambdaexp}) all vanish.

In order to use the algorithm based on successive reduction of the phase space, we first need to calculate the coefficients in perturbation conditions ${\bf D}_L$ and ${\bf E}_L$. This can be done from relations (\ref{laDef}), (\ref{pDef}), and (\ref{eipm}) by applying the operational rules for Laurent asymptotic expansions given in Lemmas 4 and 5.

By relation (\ref{laDef}), we have $\lambda_i(\e) = \lambda_{i,-}(\e) + \lambda_{i,+}(\e)$, so it follows immediately from (\ref{lipmLinear}) that
\begin{equation} \label{liLinear}
\lambda_i(\e) = g_i[0] + g_i[1] \e, \ i \in \XX,
\end{equation}
where $g_i[l] = g_{i,-}[l] + g_{i,+}[l]$, $l=0,1$.

From (\ref{pDef}), (\ref{lipmLinear}), (\ref{liLinear}), and Lemma 4 we deduce the following asymptotic series expansions, for $i \in \XX$,
\begin{equation} \label{expansionp}
p_{i,\pm}(\e) = \frac{\lambda_{i,\pm}(\e)}{\lambda_i(\e)} = \frac{g_{i,\pm}[0] + g_{i,\pm}[1] \e}{g_i[0] + g_i[1] \e} = \sum_{l = l_{i,\pm}}^{L + l_{i,\pm}} a_{i,\pm}[l] \e^l + o_{i,\pm}(\e^{L + l_{i,\pm}}).
\end{equation}
The expansion (\ref{expansionp}) exists for any integer $L \geq 0$ and its coefficients can be calculated from the division rule for asymptotic expansions.

Then, using (\ref{eipm}), (\ref{liLinear}), (\ref{expansionp}), and Lemma 4, the following asymptotic series expansions can be constructed, for $i \in \XX$,
\begin{equation} \label{expansione}
e_{i,\pm}(\e) = \frac{p_{i,\pm}(\e)}{\lambda_i(\e)} = \sum_{l = l_{i,\pm}}^{L + l_{i,\pm}} b_{i,\pm}[l] \e^l + \dot{o}_{i,\pm}(\e^{L + l_{i,\pm}}).
\end{equation}

Once the coefficients in the expansions (\ref{expansionp}) and (\ref{expansione}) have been calculated for some integer $L \geq 0$, we can use the algorithm described in Section 6 in order to construct asymptotic expansions for stationary and conditional quasi-stationary probabilities.

The remainder of this section is organized as follows. In Section 7.1 we illustrate our results with numerical calculations for the perturbed models of population genetics discussed in Section 3.3. We first consider an example where condition ${\bf H_1}$ holds and then an example where condition ${\bf H_3}$ is satisfied. Numerical illustrations for the perturbed model of epidemics presented in Section 3.2 is given in Section 7.2. This provides an example where condition ${\bf H_2}$ holds. \\

{\bf 7.1. Numerical examples for perturbed models of  \\ 
\makebox[14mm]{} population genetics} \\

Recall that the perturbation conditions for the model in Section 3.3 are formulated in terms of the mutation parameters as
\begin{equation} \label{perturbationU}
U_1(\e) = C_1 + D_1 \e, \quad U_2(\e) = C_2 + D_2 \e.
\end{equation}

Additionally, the model depends on the size $N/2$ of the population and the selection parameters $S_1$ and $S_2$ which are assumed to be independent of $\e$. Thus, there are in total seven parameters to choose.

In our first example, we choose the following values for the parameters: $N = 100$, $C_1 = C_2 = 5$, $D_1 = 0$, $D_2 = N$, and $S_1 = S_2 = 0$. Recall that the mutation probabilities are related to the mutation parameters by $u_1(\e) = U_1(\e) / N$ and $u_2(\e) = U_2(\e) / N$. It follows from \eqref{perturbationU} that $u_1(\e) = 0.05$ and $u_2(\e) = 0.05 + \e$. Thus, in the limiting model, a chosen allele mutates with probability $0.05$ for both types $A_1$ and $A_2$. In this case, we have no absorbing states which means that condition ${\bf H_1}$ holds.

Since we have no selection, the stationary distribution for the limiting model will be symmetric around state $50$. The perturbation parameter $\e$ can be interpreted as an increase in the probability that a chosen allele of type $A_2$ mutates to an allele of type $A_1$. Increasing the perturbation parameter will shift the mass of the stationary distribution to the right.

With model parameters given above, we first used relations (\ref{laij}), (\ref{xast}), (\ref{xastast}), (\ref{SmallPar}), and (\ref{ue}) to calculate the coefficients in (\ref{lipmLinear}) for the intensities. Then, these coefficients were used to compute the coefficients in the perturbation conditions ${\bf D}_L$ and ${\bf E}_L$ as described above. After this, we used the algorithm outlined in Section 6 to calculate the asymptotic expansions for the stationary distribution given by (\ref{statar}) for $L = 3$. In particular, this also gives the corresponding asymptotic expansions for $L = 0,1,2$. Approximations for the stationary distribution based on these expansions were obtained by approximating the residual term by zero.

\begin{figure}
\centering
\includegraphics[page=1,scale=0.65]{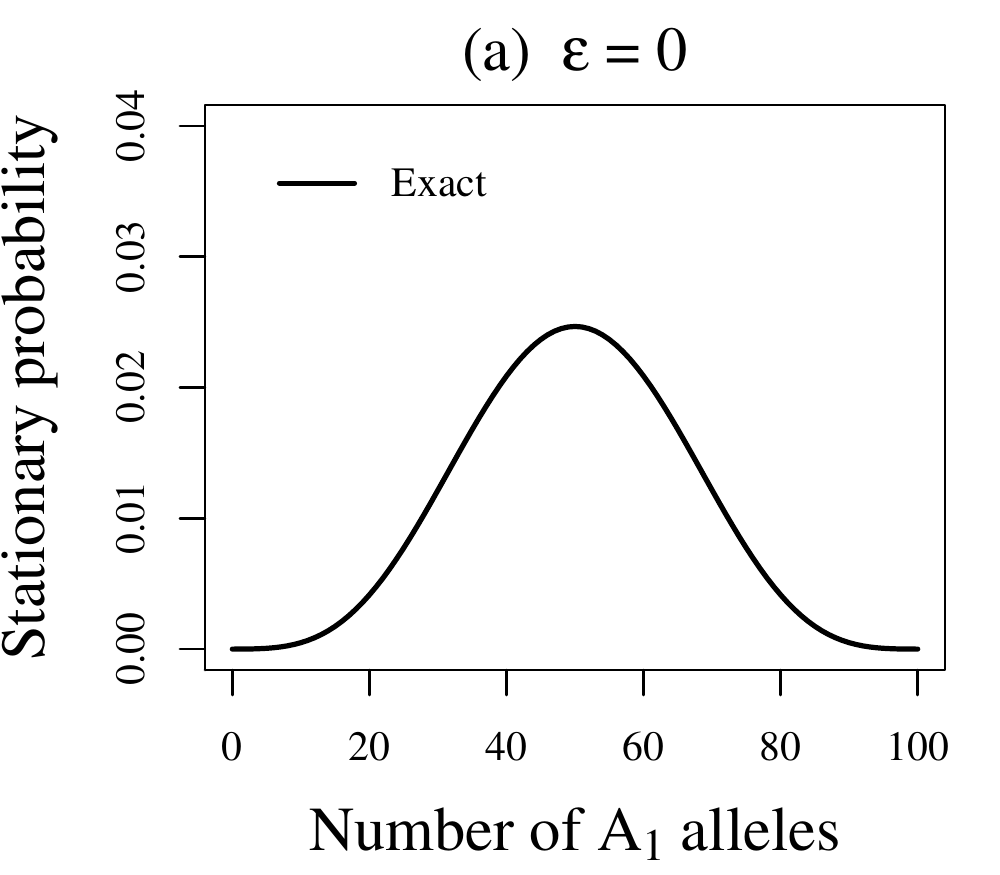}
\includegraphics[page=2,scale=0.65]{Rplots.pdf} \\
\vspace{5pt}
\includegraphics[page=3,scale=0.65]{Rplots.pdf}
\includegraphics[page=4,scale=0.65]{Rplots.pdf}
\caption{Comparison of the stationary distribution $\pi_i(\varepsilon)$ and some of its approximations for the population genetic example of Section 3.3.
The plots are functions of the number of $A_1$ alleles $i$, for different values of the perturbation parameter $\varepsilon$, with $N = 100$, $C_1 = C_2 = 5$, $D_1 = 0$, $D_2 = N$, and $S_1 = S_2 = 0$.} \label{figH1stationary}
\end{figure}

Let us first compare our approximations with the exact stationary distribution for some particular values of the perturbation parameter. Figure \ref{figH1stationary} (a) shows the stationary distribution for the limiting model ($\e = 0$) and, as already mentioned above, we see that it is symmetric around state $50$. The stationary distribution for the model with $\e = 0.01$ and the approximation corresponding to $L = 1$ are shown in Figure \ref{figH1stationary} (b). Here, the approximation seems the match the exact distribution very well. The approximation for $L = 2$ is not included here since it will not show any visible difference from the exact stationary distribution. In Figures \ref{figH1stationary} (c) and \ref{figH1stationary} (d), correspoding to the models where $\e = 0.02$ and $\e = 0.03$, respectively, we also include the approximations for $L = 2$. As expected, the approximations for the stationary distribution get worse as the perturbation parameter increases. However, it seems that even for higher values of the perturbation parameter, some parts of our approximations fit better to the exact stationary distribution. In this example, it seems that the approximations are in general better for states that belong to the right part of the distribution.

\begin{figure}
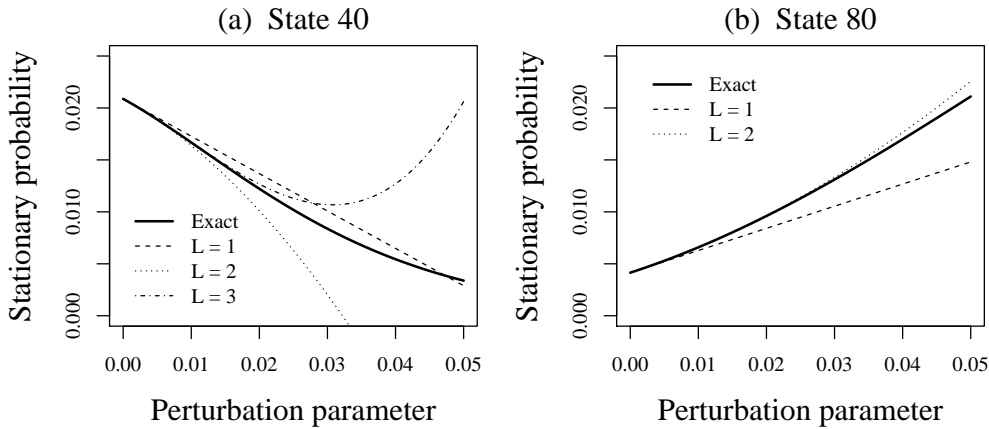

\centering
\includegraphics[page=5,scale=0.65]{Rplots.pdf}
\includegraphics[page=6,scale=0.65]{Rplots.pdf}
\caption{Comparison of stationary probabilities $\pi_i(\varepsilon)$ for states $i=40$ and $i=80$ and some of its approximations considered as a function of the perturbation parameter $\varepsilon$. The model is based on the population genetic example of Section 3.3, with the same parameter values as in Figure 1.} \label{figH1states}
\end{figure}

In order to illustrate that the quality of the approximations differ depending on which states we consider, let us compare the stationary probabilities for the states $40$ and $80$. The stationary probabilities of these two states are approximately of the same magnitude and we can compare them in plots with the same scale on both the horizontal and the vertical axes. Figure \ref{figH1states} (a) shows the stationary probability for state $40$ as a function of the perturbation parameter and its approximations for $L = 1,2,3$. The corresponding quantities for state $80$ are shown in Figure \ref{figH1states} (b) where we have omitted the approximation for $L = 3$ since the approximation is very good already for $L = 2$. When $L = 2$, the approximation for state $80$ is clearly better compared to the approximation for state $40$.

Another point illustrated by Figures \ref{figH1stationary} and \ref{figH1states} is that for a fixed value of the perturbation parameter, the quality of an approximation based on a higher order asymptotic expansion is not necessarily better. For instance, in Figure \ref{figH1states} (a) we see that for $\e \in [0.04, 0.05]$ the approximations for $L = 1$ is better compared to both $L = 2$ and $L = 3$. However, asymptotically as $\e \rightarrow 0$, the higher order approximations are better. For example, we see in Figure \ref{figH1states} (a) that when $\e \in [0, 0.02]$ the approximations for $L = 3$ are the best.

Let us now consider a second example for the perturbed model of population genetics. We now choose the parameters as follows: $N = 100$, $C_1 = C_2 = 0$, $D_1 = D_2 = N$, and $S_1 = S_2 = 0$. In this case, both types of mutations have the same probabilities and are equal to the perturbation parameter, that is, $u_1(\e) = u_2(\e) = \e$. This means that both boundary states will be asymptotically absorbing, so condition ${\bf H_3}$ holds.

In this case, we calculated the asymptotic expansions for the stationary and conditional quasi-stationary stationary distribution, given by (\ref{statarc}) and (\ref{statard}), respectively.

Let us illustrate the numerical results obtained for conditional quasi-stationary distributions. Figure \ref{figH3quasi} (a) shows the conditional quasi-stationary distribution for $\e = 0.005$ and some of its approximations. Since it is quite hard to see the details near the boundary states for this plot, we also show the same curves restricted to the states $1$--$20$ in Figure \ref{figH3quasi} (b). As in the previous example, it can be seen that the qualities of the approximations differ between the states. In this case, we see that the approximations for states close to the boundary are not as good as for interior states. Similar type of behavior also appears for different choices of the selection parameters $S_1$ and $S_2$. We omit the plots showing this since they do not contribute with more understanding of the model.

\begin{figure}
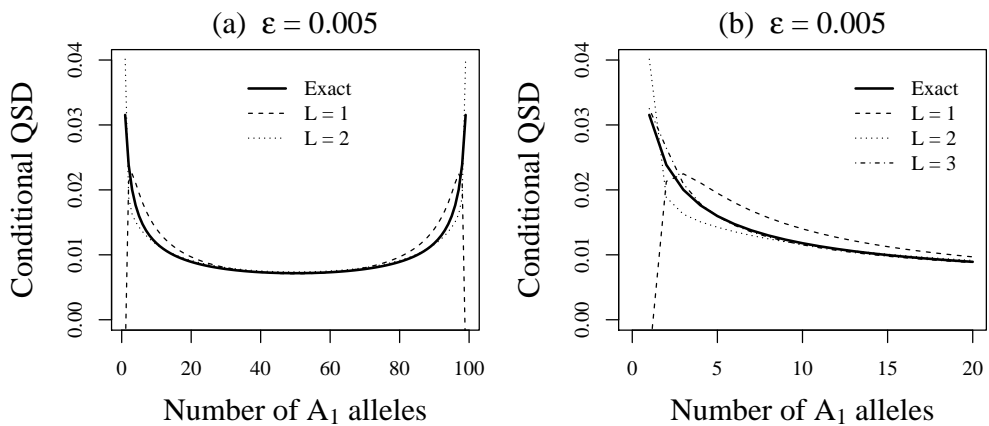

\centering
\includegraphics[page=7,scale=0.65]{Rplots.pdf}
\includegraphics[page=8,scale=0.65]{Rplots.pdf}
\caption{The conditional quasi-stationary distribution $\hat{\pi}_i(\varepsilon)$ and some of its approximations for the population genetic example of Section 3.3.
The plots are functions of the number of $A_1$ alleles $i$, with the perturbation parameter $\e = 0005$ fixed. Plot (a) shows the distribution for all states while plot (b) is restricted to states $1$--$20$. The parameter values of the model are $N = 100$, $C_1 = C_2 = 0$, $D_1 = D_2 = N$, and $S_1 = S_2 = 0$.} \label{figH3quasi}
\end{figure}

Let us instead study the limiting conditional quasi-stationary distributions (\ref{CondQuasiLimit}) for some different values of the selection parameters $S_1$ and $S_2$. These types of distributions are interesting in their own right and are studied, for instance, by Allen and Tarnita (2014) where they are called \emph{rare-mutation dimorphic distributions}. In our example, if mutations are rare (i.e., $\e$ is very small), the probabilities of such a distribution can be interpreted as the likelihoods for different allele frequencies to appear during periods of competition which are separated by long periods of fixation.

Figure \ref{figH3limit} (a) shows the limiting conditional quasi-stationary distribution in the case $S_1 = S_2 = 0$, that is, for a selectively neutral model. Now, let the selection parameters be given by $S_1 = 10$ and $S_2 = -10$. In this case, the gene pairs with genotypes $A_1 A_1$, $A_2 A_2$, and $A_1 A_2$ have survival probabilities approximately equal to $0.37$, $0.30$, and $0.33$, respectively. Thus, allele $A_1$ has a selective advantage and this is reflected in Figure \ref{figH3limit} (b) where the limiting conditional quasi-stationary distribution is shown in this case. The mass of the distribution is now shifted to the right compared to a selectively neutral model. Next, we take the selection parameters as $S_1 = S_2 = 10$ which implies that gene pairs with genotypes $A_1 A_1$, $A_2 A_2$, and $A_1 A_2$ have survival probabilities approximately equal to $0.345$, $0.345$, and $0.31$, respectively. This means that we have a model with underdominance and we see in Figure \ref{figH3limit} (c) that the limiting conditional quasi-stationary distribution then has more of its mass near the boundary compared to a selectively neutral model. Finally, we set the selection parameters as $S_1 = S_2 = -10$. Then, gene pairs with genotypes $A_1 A_1$, $A_2 A_2$, and $A_1 A_2$ have survival probabilities approximately equal to $0.32$, $0.32$, and $0.36$, respectively. This gives us a model with overdominace or balancing selection and in this case we see in Figure \ref{figH3limit} (d) that the limiting conditional quasi-stationary distribution has more mass concentrated to the interior states compared to a selectively neutral model.

\begin{figure}
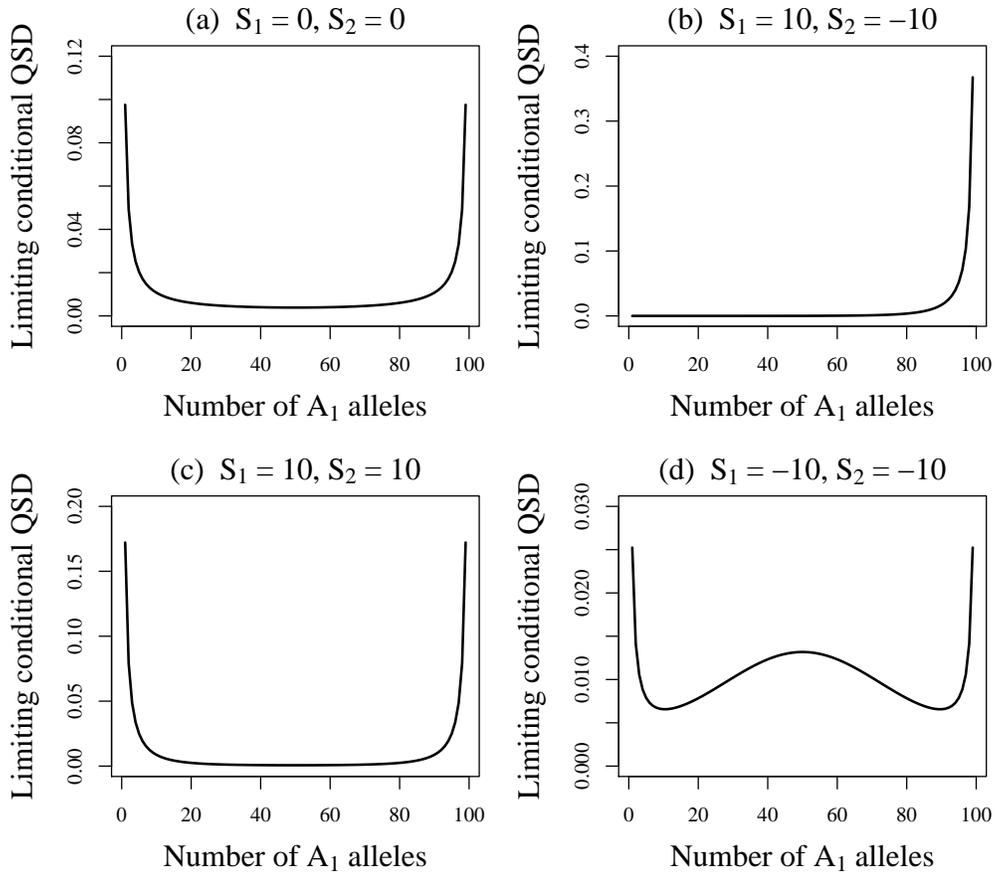

\centering
\includegraphics[page=9,scale=0.65]{Rplots.pdf}
\includegraphics[page=10,scale=0.65]{Rplots.pdf} \\
\vspace{5pt}
\includegraphics[page=11,scale=0.65]{Rplots.pdf}
\includegraphics[page=12,scale=0.65]{Rplots.pdf}
\caption{Plots of the limiting conditional quasi-stationary distribution $\hat{\pi}_i(0)$ for the population genetic example of Section 3.3, as a function of the number of $A_1$-alleles $i$, for different values of the selection parameters.
The model parameters $N$, $C_1$, $C_2$, $D_1$ and $D_2$ are the same as in Figure 3. Note that the scales of the vertical axes differ between the plots.} \label{figH3limit}
\end{figure}

\par\bigskip
{\bf 7.2 Numerical examples for perturbed epidemic models}
\par\bigskip

In our last numerical example, we consider the perturbed epidemic model described in Section 3.2. Recall from (\ref{ExtRate}) that the contact rate $\nu$ for each individual and the group of infected individuals outside the population is considered as a perturbation parameter, that is, $\nu = \nu(\e) = \e$. In this case, state $0$ is asymptotically absorbing which means that condition ${\bf H_2}$ holds.

It follows directly from (\ref{la+Epid}) and (\ref{la-Epid}) that the intensities of the Markov chain describing the number of infected individuals are linear functions of $\e$ given by
\begin{equation*}
\lambda_{i,+}(\e) = \lambda i (1 - i/N) + (N - i) \e, \quad \lambda_{i,-}(\e) = \mu i, \ i \in \XX.
\end{equation*}

In this model, we only have three parameters to choose: $N$, $\lambda$, and $\mu$. As in the previous examples,  let us take $N = 100$ which here corresponds to the size of the population. Furthermore, we let $\mu = 1$ so that the expected time for an infected individual to be infectious is equal to one time unit. Numerical illustrations will be given for the cases where $\lambda = 0.5$ and $\lambda = 1.5$. For the limiting model, we have in the former case that the basic reproduction ratio $R_0 = 0.5$ and in the latter case $R_0 = 1.5$. The properties of the model is quite different depending on which of these two cases we consider.

For the two choices of model parameters given above we calculated asymptotic expansions for stationary and conditional quasi-stationary distributions given by (\ref{statara}) and (\ref{statarb}), respectively.

\begin{figure}
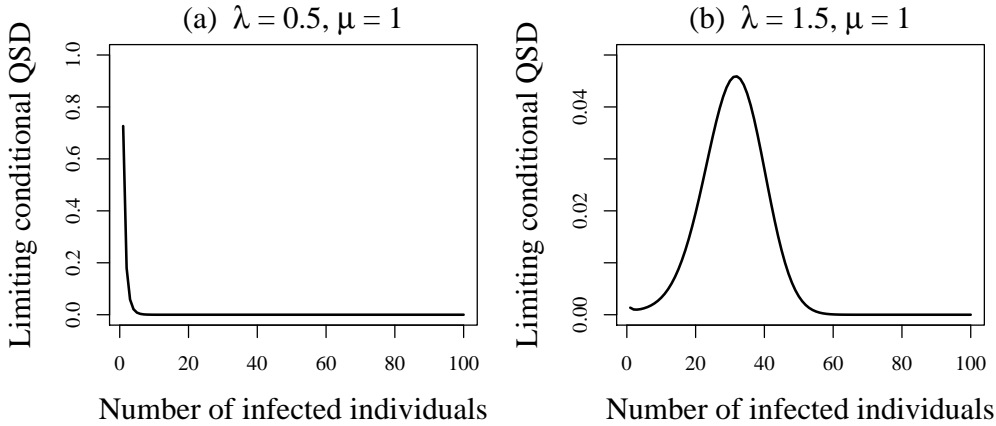

\centering
\includegraphics[page=13,scale=0.65]{Rplots.pdf}
\includegraphics[page=14,scale=0.65]{Rplots.pdf}
\caption{Comparison of the limiting conditional quasi-stationary distribution $\tilde{\pi}_i(0)$ for the epidemic model of Section 3.2, as a function of the number of infected  individuals $i$, for a population of size $N=100$ with recovery rate $\mu=1$. The force of infection parameter is $\lambda=0.5$ in a) and $\lambda=1.5$ in b). Note that the scales of the vertical axes differ between the two plots.} \label{figH2limit}
\end{figure}

Let us first compare the limiting conditional quasi-stationary distributions (\ref{tpii0}). Figure \ref{figH2limit} (a) shows this distribution for the case where $\lambda = 0.5$ and $\mu = 1$ and in Figure \ref{figH2limit} (b) it is shown for the case where $\lambda = 1.5$ and $\mu = 1$. In the former case, the limiting conditional quasi-stationary distribution has most of its mass concentrated near zero and in the latter case the distribution has a shape which resembles a normal curve and most of its mass is distributed on the states between $0$ and $60$.

\begin{figure}
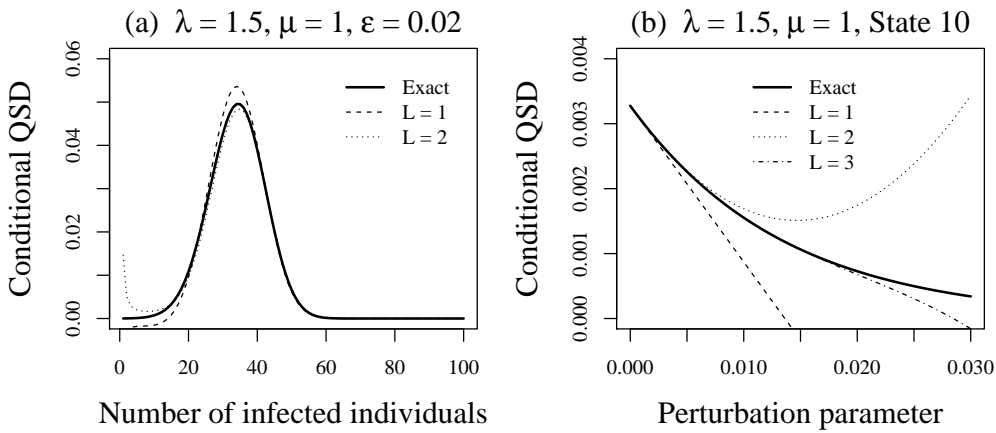

\centering
\includegraphics[page=15,scale=0.65]{Rplots.pdf}
\includegraphics[page=16,scale=0.65]{Rplots.pdf}
\caption{Conditional quasi-stationary probabilities $\tilde{\pi}_i(\varepsilon)$ and some approximations for the epidemic model of Section 3.2, with
$N=100$, $\lambda = 1.5$ and $\mu = 1$. Note that the horizontal axes in the two plots represent different quantities; the number of infected individuals $i$ in a) and the perturbation parameter $\varepsilon$ in b).} \label{figH2quasi}
\end{figure}

We can also study plots of the type given in Figures \ref{figH1stationary}--\ref{figH3quasi}. Also in this example, intervals for the perturbation parameter where the approximations are good depend on which state is considered. In this case, states close to zero are more sensitive to perturbations. Let us here just show two of the plots for illustration. For the model with $\lambda = 1.5$ and $\mu = 1$, Figure \ref{figH2quasi} (a) shows the conditional quasi-stationary distribution for $\e = 0.02$ and the corresponding approximations for $L = 1$ and $L = 2$. For the same model parameters, the quasi-stationary probability for state $10$ is shown in Figure \ref{figH2quasi} (b) as a function of the perturbation parameter together with some of its approximations.

\begin{figure}
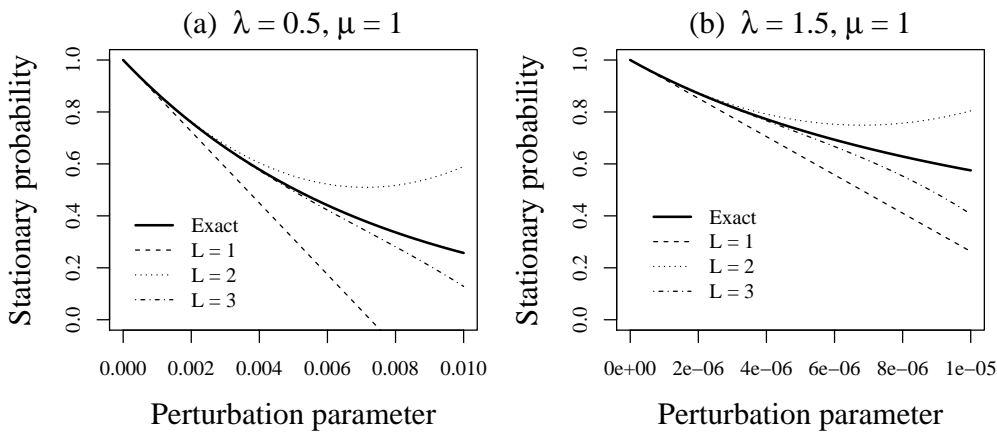

\centering
\includegraphics[page=17,scale=0.65]{Rplots.pdf}
\includegraphics[page=18,scale=0.65]{Rplots.pdf}
\caption{Comparison of the stationary probability $\pi_i(\varepsilon)$ of state $i=0$ as a function of the perturbation parameter $\varepsilon$ for the epidemic model of Section 3.2 when $N=100$, $\mu=1$, and the contact rate parameter equals a) $\lambda=0.5$ and b) $\lambda=1.5$. Note that the scales of the horizontal axes differ between the two plots.} \label{figH2state0}
\end{figure}

Finally, let us compare the stationary probabilities for state $0$. Note that, despite that the limiting conditional quasi-stationary distribution is very different depending on whether $R_0 = 0.5$ or $R_0 = 1.5$ for the model with $\e = 0$, the limiting stationary distribution is concentrated at state $0$ in both these cases. Figure \ref{figH2state0} (a) shows the stationary probability of state $0$ as a function of the perturbation parameter and some of its approximations in the case where $\lambda = 0.5$ and $\mu = 1$. The corresponding quantities for the case where $\lambda = 1.5$ and $\mu = 1$ are shown in Figure \ref{figH2state0} (b). 

Qualitatively the plots show approximately the same behavior, but note that the scales on the horizontal axes are very different. We see that the stationary probability of state $0$ for the limiting model is much more sensitive to perturbations in the case where $R_0 = 1.5$. 

It follows from (\ref{NonExtinctProb}) that this is due to fact that the expected time $E_{10}(\varepsilon)$ for the infection to (temporarily) die out after one individual gets infected, is much larger for the model with $R_0=1.5$.  

\pagebreak

{\bf 8. Discussion} \\

The present paper is devoted to studies of asymptotic expansions for stationary and conditional quasi-stationary distributions for perturbed birth-death-type semi-Markov processes. The algorithms of sequential phase space reduction for perturbed semi-Markov processes combined with techniques of Laurent asymptotic expansions developed in the recent papers by Silvestrov, D. and Silvestrov, S. (2015, 2016) are applied to birth-death-type semi-Markov processes. In this model, the proposed algorithms of phase space reduction   preserve the birth-death structure for reduced semi-Markov processes.  This made it possible to get, in the present paper, explicit formulas for coefficients of the corresponding asymptotic expansions for stationary and quasi-stationary distributions. The asymptotic expansions may still be preferable though when the state space is large and (quasi-)stationary distributions are computed for several values of the perturbation parameter, since only the coefficients of the appropriate Laurent expansions are needed. 
We also apply the above results to perturbed models of population dynamics, epidemic models and models of population genetics and supplement theoretical results by computations illustrating numerical accuracy of the corresponding asymptotic expansions and diversity of shape forms for stationary and quasi-stationary distributions in the above perturbed models. 

Several extensions of our work are possible. We have considered semi-Markov processes defined on a finite and linearly ordered state space $\XX$, that is a subset of a one-dimensional lattice. We also confined ourselves to processes of birth-death type, where only jumps to neighboring states are possible. 

For population dynamics models, one needs to go beyond birth-death processes though and incorporate larger jumps in order to account for a changing environment, Lande et al. (2003). State spaces that are subsets of higher-dimensional lattices are of interest in a number of applications, for instance SIR-models of epidemic spread where some recovered individuals get immune, 
N{\aa}sell (2002), population genetic models with two sexes, Moran (1958b), H\"{o}ssjer and Tyvand (2016), and population dynamics or population genetics models with several species or subpopulations, Lande et al. (2003), H\"{o}ssjer et al. (2014). It is an interesting topic of further research to apply the methodology of this paper to such models.

\end{document}